\DeclareMathOperator{\im}{Im} 
\DeclareMathOperator{\re}{Re} 
\DeclareMathOperator{\comp}{comp}
\DeclareMathOperator{\loc}{loc}
\newtheorem{exttheo}{Theorem}
\newtheorem{theorem}{Theorem}[section]
\newtheorem{proposition}[theorem]{Proposition}
\newtheorem{lemma}[theorem]{Lemma}
\newtheorem{corollary}[theorem]{Corollary}
\newtheorem{remark}[theorem]{Remark}
\theoremstyle{definition}%
\newtheorem{definition}[theorem]{Definition}
\newtheorem{assumption}{Assumption}
\newtheorem{notation}[theorem]{Notation}
\theoremstyle{remark}
\def\pasdegrille{\let\grille = \pasgrille}
\def\aat#1#2#3{
\divide \dimen1 by 48 \dimen3=\dimen1 \multiply \dimen1 by #1
\advance \dimen1 by -\dimen3 \divide \dimen1 by 101 \multiply
\dimen1 by 100 \divide \dimen2 by \count11 \multiply \dimen2 by #2
\setbox0=\hbox{#3}\ht0=0pt\dp0=0pt
  \rlap{\kern\dimen1 \vbox to0pt{\kern-\dimen2\box0\vss}}\dimen1= \wd1
\dimen2=\ht1}
\def\pasgrille{
\count12= \dimen1 \divide \count12 by 50 \divide \dimen2 by \count12
\count11 =\dimen2 \ \divide \dimen1 by 48
\setlength{\unitlength}{\dimen1} \smash{\rlap{\ }} \dimen1= \wd1
\dimen2=\ht1 }
\def\grille{
\count12= \dimen1 \divide \count12 by 50 \divide \dimen2 by \count12
\count11 =\dimen2 \ \divide \dimen1 by 48
\setlength{\unitlength}{\dimen1}
\smash{\rlap{\graphpaper[1](0,0)(50, \count11)}} \dimen1= \wd1
\dimen2=\ht1 }
\numberwithin{equation}{section}
\newcommand{\R}{\mathbb{R}}
\newcommand{\C}{\mathbb{C}}
\newcommand{\Z}{\mathbb Z}
\newcommand{\supp}{\operatorname{Supp}}
\newcommand{\beq}{\begin{equation}}
\newcommand{\eeq}{\end{equation}}
\newcommand{\ben}{\begin{eqnarray}}
\newcommand{\een}{\end{eqnarray}}
\newcommand{\beno}{\begin{eqnarray*}}
\newcommand{\eeno}{\end{eqnarray*}}
\newcommand{\indic}{1\!\!1}
\def\eps{\varepsilon}
\newcommand{\la}{\langle}
\newcommand{\ra}{\rangle}
\newcommand{\AAA}{\mathcal{A}}
\newcommand{\HHH}{\mathcal{H}}
\newcommand{\PPP}{\mathcal{P}}
\newcommand{\uv}{\underline{v}}
\title[Dispersive estimates with potential in exterior domains]{Dispersive estimates for wave and Schr\"odinger equations with a potential in non-trapping exterior domains}
\author{Thomas Duyckaerts${}^1$}
\address{Thomas Duyckaerts, LAGA, Institut Galil\'ee, Universit\'e Paris 13
99, avenue Jean-Baptiste Cl\'ement,
93430 - Villetaneuse, France and
DMA, \'Ecole Normale Sup\'erieure, Universit\'e PSL, CNRS, 75005 Paris, France}
\email{duyckaer@math.univ-paris13.fr}
\author{Jianwei Yang${}^2$}
\address{Jianwei Yang, Department of Mathematics, Beijing Institute of Technology, Beijing 100081, P. R. China}
\email{jw-urbain.yang@bit.edu.cn}
\thanks{$^1$LAGA (UMR 7539), Universit\'e Paris Sorbonne Paris Nord DMA, \'Ecole Normale Sup\'erieure, Universit\'e PSL. Partially supported by the Labex MME-DII and the \emph{Institut Universitaire de France}}
\thanks{$^2$Department of Mathematics, Beijing Institute of Technology. Supported by NSFC grant No. 12371239 and Research fund program for young scholars of Beijing Institute of Technology.}
\keywords{Wave equation, Schr\"odinger equation, Local smoothing, Strichartz estimates, Exterior problem}
\subjclass{35L05, 
35Q41, 
35B65, 
35P25   
}
\begin{document}

\begin{abstract}
 We prove resolvent estimates for a Schr\"odinger operator with a short-range potential outside an obstacle with Dirichlet boundary conditions. As a consequence, we deduce integrability of the local energy for the wave equation, and smoothing effect for the Schr\"odinger equation. Finally, for both equations, we prove that local Strichartz estimates for the free equation outside an obstacle imply global Strichartz estimates with a short-range potential outside the same obstacle. The estimates are all global in time, after projection on the continuous spectrum of the operator.
\end{abstract}

\maketitle

\section{Introduction}

Let $N\ge 3$ and $K\subset\R^N$ be a compact set with $C^2$ boundary $\partial K$, and $V$ a real-valued, continuous potential on $\R^N$ that decays to $0$ at infinity. 

The aim of this note is to study global Strichartz estimates for the linear wave equation
\begin{equation}
 \label{Wave_intro}
 \left\{\begin{aligned}
 \partial_t^2u-\Delta u+Vu&=f, &&(t,x)\in \R\times \Omega\\
 u(t,x)&=0, &&(t,x)\in \R\times \partial\Omega
 \end{aligned}
 \right.
 \end{equation} 
and the Schr\"odinger equation 
\begin{equation}
\label{LS_intro}
 \left\{\begin{aligned}
i\partial_t\varphi+\Delta \varphi-V\varphi&=\psi,&&(t,x)\in \R\times \Omega\\
\varphi(t,x)&=0,&&(t,x)\in \R\times \partial\Omega
 \end{aligned}
 \right.
\end{equation}
on $\Omega$, with potential $V$ and Dirichlet boundary conditions. One of our motivation is the study of the linearized equation around a stationary state of the nonlinear wave equation outside an obstacle, see \cite{DuyckaertsYang23Pb}. We will consider finite energy solutions of \eqref{Wave_intro}, that is solutions with initial data 
\begin{equation}
\label{Wave_ID}
(u,\partial_tu)_{\restriction t=0}=(u_0,u_1)\in \mathcal{H}:= \dot{H}^1_0(\Omega)\times L^2(\Omega),
\end{equation} 
(where $\dot{H}^1_0(\Omega)$ is the closure of $C^{\infty}_0(\Omega)$ for the norm defined by $\|f\|^2_{\dot{H}^1_0(\Omega)}=\int_{\Omega} |\nabla f|^2$),
and finite mass solutions of \eqref{LS_intro}:
\begin{equation}
 \label{LS_ID}
 \varphi_{\restriction t=0}=\varphi_0\in L^2(\Omega).
\end{equation}
We refer to Appendix \ref{AA:lower} for more general initial conditions than \eqref{Wave_ID} for equation \eqref{Wave_intro}.

We first recall Strichartz estimates for the free equations on the Euclidean space (denoting $L^pL^q=L^p(\R,L^q(\Omega))$):
\begin{exttheo}
\label{T:free_Strichartz}
Assume $\Omega=\R^N$, $V=0$. 
\begin{itemize}
 \item 
Assume $(p,q)$ satisfies $2\leq p\leq \infty$, $2\leq q<\infty$ and 
\begin{equation}
\label{admissible_wave}
 \frac{1}{p}+\frac{N}{q}=\frac{N}{2}-1.
\end{equation}
Then for any solution $u,f$ of \eqref{Wave_intro}, \eqref{Wave_ID} with $f\in L^1L^2$ one has
\begin{gather}
\label{StrichartzWave}
 \|u\|_{L^pL^q}\lesssim \|(u_0,u_1)\|_{\mathcal{H}}+\|f\|_{L^1L^2},
\end{gather}
\item Assume that for $j=1,2$, $(p,q)=(p_j,q_j)$ satisfy $2\leq p\leq \infty$, $2\leq q<\infty$ and 
\begin{equation}
 \label{admissible_LS}
 \frac{2}{p}+\frac{N}{q}=\frac{N}{2}.
\end{equation} 
Then for any solution $\varphi$, $\psi$ of \eqref{LS_intro}, \eqref{LS_ID} with $\psi\in L^{p_2'}L^{q_2'}$, one has
\begin{gather}
\label{StrichartzLS}
 \|\varphi\|_{L^{p_1}L^{q_1}}\lesssim \|\varphi_0\|_{L^2}+\|\psi\|_{L^{p_2'}L^{q_2'}}.
\end{gather}
\end{itemize}
\end{exttheo}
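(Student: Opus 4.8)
The plan is to deduce both estimates from the abstract Strichartz inequalities of Keel and Tao, which need only the $L^2$‑unitarity and the pointwise‑in‑time dispersive decay of the free propagators; away from the endpoint pairs one may instead run the older $TT^*$ plus Hardy--Littlewood--Sobolev argument, with the Christ--Kiselev lemma for the retarded estimates.

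\emph{The Schr\"odinger estimate.} By Duhamel, the solution of \eqref{LS_intro} with $V=0$, $\Omega=\R^N$ is $\varphi(t)=e^{it\Delta}\varphi_0-i\int_0^te^{i(t-s)\Delta}\psi(s)\,ds$, where $U(t)=e^{it\Delta}$ is unitary on $L^2(\R^N)$ and, from the explicit kernel $(4\pi it)^{-N/2}e^{i|x|^2/4t}$, satisfies $\|U(t)g\|_{L^\infty}\lesssim|t|^{-N/2}\|g\|_{L^1}$. Riesz--Thorin interpolation with the $L^2$ isometry gives $\|U(t)U(s)^*\|_{L^{q'}\to L^q}\lesssim|t-s|^{-N(\frac12-\frac1q)}$ for $2\le q\le\infty$, i.e. the decay hypothesis of Keel--Tao with $\sigma=N/2$. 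Every pair with $\frac2p+\frac Nq=\frac N2$, $2\le p\le\infty$, $2\le q<\infty$ is then sharp admissible — the only forbidden endpoint, $q=\infty$ together with $\sigma=1$, cannot occur since $q<\infty$ and $N\ge3$ — so the abstract homogeneous and inhomogeneous estimates yield $\|U(t)\varphi_0\|_{L^{p_1}L^{q_1}}\lesssim\|\varphi_0\|_{L^2}$ and $\big\|\int_0^tU(t-s)\psi(s)\,ds\big\|_{L^{p_1}L^{q_1}}\lesssim\|\psi\|_{L^{p_2'}L^{q_2'}}$ for all admissible $(p_1,q_1),(p_2,q_2)$, including the double endpoint $(2,\tfrac{2N}{N-2})$ (valid because $N\ge3$). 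Summing the two contributions gives \eqref{StrichartzLS}.

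\emph{The wave estimate.} Write $u(t)=\cos(t\sqrt{-\Delta})u_0+\frac{\sin(t\sqrt{-\Delta})}{\sqrt{-\Delta}}u_1+\int_0^t\frac{\sin((t-s)\sqrt{-\Delta})}{\sqrt{-\Delta}}f(s)\,ds$. The half‑wave group has no uniform dispersive decay, so insert a Littlewood--Paley projection $P_\mu$ onto $|\xi|\sim\mu$; stationary phase on the sphere gives $\|e^{\pm it\sqrt{-\Delta}}P_\mu g\|_{L^\infty}\lesssim\mu^N(1+\mu|t|)^{-(N-1)/2}\|g\|_{L^1}$. Rescaling to $\mu=1$, this is a decay bound with $\sigma=(N-1)/2$ valid for all $t\ne0$ (for $|t|\le1$ one has $|t|^{-\sigma}\ge1$), so Keel--Tao applies at unit frequency, the forbidden endpoint ($q=\infty$, $N=3$) being excluded by $q<\infty$. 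Undoing the rescaling yields, for every $(p,q)$ obeying \eqref{admissible_wave} with $2\le p\le\infty$, $2\le q<\infty$, the bound $\|P_\mu u\|_{L^pL^q}\lesssim\|P_\mu u_0\|_{\dot H^1}+\|P_\mu u_1\|_{L^2}+\|P_\mu f\|_{L^1L^2}$ uniformly in $\mu$ — here \eqref{admissible_wave} is exactly what makes the power of $\mu$ generated by the rescaling equal one spatial derivative, and the $L^1L^2$ source is treated by Minkowski's inequality, the homogeneous estimate, and the Christ--Kiselev lemma (applicable since $p>1$). Since $p,q\ge2$, the Littlewood--Paley square function estimate then upgrades this to $\|u\|_{L^pL^q}\lesssim\big\|(\sum_\mu|P_\mu u|^2)^{1/2}\big\|_{L^pL^q}\lesssim(\sum_\mu\|P_\mu u\|_{L^pL^q}^2)^{1/2}$, which with the frequency‑localized estimates and the analogous square function bounds for the $\dot H^1\times L^2$ data gives \eqref{StrichartzWave}.

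The genuinely delicate step is the endpoint of the Keel--Tao inequality: at $p=2$ the $TT^*$ kernel lies at the borderline of $L^p_t$, so Hardy--Littlewood--Sobolev no longer closes the argument and one needs the dyadic‑in‑time decomposition and bilinear real interpolation of Keel--Tao (plus their refinement of it for the Schr\"odinger retarded estimate at the double endpoint). In the present range this machinery is available — $\sigma=N/2\ge\frac32$ for Schr\"odinger and $\sigma=(N-1)/2\ge1$ for the wave equation, with the forbidden triple $(2,\infty,1)$ ruled out by $q<\infty$ — so no obstruction arises; the remaining ingredients (the explicit Schr\"odinger kernel, the stationary‑phase bound on the sphere, the Littlewood--Paley summation) are classical.
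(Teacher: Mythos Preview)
Your sketch is correct and follows the standard route the paper itself cites rather than proves: Theorem~\ref{T:free_Strichartz} is stated as a known result with references to \cite{St77a,GiVe85,Yajima87,CaWe88,Kapitanski90,GiVe95,LiSo95,KeTa98}, and your argument is precisely the Keel--Tao abstract framework (dispersive decay plus $L^2$ unitarity, Littlewood--Paley for the wave case) from \cite{KeTa98}. One small remark: for the $L^1L^2$ source term in the wave estimate, Christ--Kiselev is not needed --- Minkowski in $s$ together with the homogeneous estimate applied at each fixed $s$ already gives the retarded bound, since restricting to $t>s$ only decreases the $L^p_t$ norm.
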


Strichartz estimates for the free wave and Schr\"odinger equations were introduced in \cite{St77a} and generalized by several authors: see \cite{GiVe85}, \cite{Yajima87}, \cite{CaWe88} for the Schr\"odinger equation, \cite{Kapitanski90}, \cite{GiVe95} and \cite{LiSo95} for the wave equation and \cite{KeTa98} for the endpoints $p=2$ (respectively $p_1=2$, $p_2=2$).

We note that for the wave equation, when initial data are in $\HHH$ as in Theorem \ref{T:free_Strichartz}, the usual admissibility condition $\frac{2}{p}+\frac{N-1}{q}\leq \frac{N-1}{2}$ (see e.g. \cite[Definition 1.1]{KeTa98}) is a consequence of \eqref{admissible_wave}.

When $\Omega=\R^N$ and $V\neq 0$, it is easy to deduce (at least if $V$ is regular enough and decays fast enough at infinity) local-in-time Strichartz estimates for \eqref{Wave_intro}, \eqref{LS_intro} from Theorem \ref{T:free_Strichartz} and H\"older's inequality. With an appropriate smallness assumption on the potential, one obtains global estimates. These global estimates fail for more general potential due to the possible existence of negative eigenvalues, and/or of an eigenvalue or a resonance at $0$ for the operator $-\Delta +V$ (see Definition \ref{def:resonance} below). Projecting on the continuous spectrum or assuming that the operator is positive to avoid the negative eigenvalues, and assuming that $0$ is not an eigenvalue or a resonance, global Strichartz estimates were proved with an almost optimal decay assumption on the potential in various contexts, see e.g. \cite{RodnianskiSchlag04},  \cite{MetcalfeTataru12}, \cite{Beceanu11}, \cite{BoucletMizutani18}, \cite{DAncona20} and reference therein. The optimal decay assumption in these articles is roughly $|V(x)|\lesssim |x|^{-2}$ at infinity or $V\in L^{n/2}$ (short-range potential).

Several authors have studied the case $V=0$, $K\neq \emptyset$, proving local or global Strichartz estimates for both equations \eqref{Wave_intro} and \eqref{LS_intro} under a suitable geometric assumption on $K$. Unlike the case where $K=\emptyset$, $V\neq 0$, even the local Strichartz estimates are far from being trivial. When the obstacle is strictly convex, all Strichartz estimates of Theorem \ref{T:free_Strichartz} are valid (see Remarks \ref{R:pqwave}, \ref{R:pqLS} below).


A weaker geometric assumption on $K$ is that it is a nontrapping obstacle, i.e. that every ray of geometric optics on $\Omega$ exit from every compact subset of $\overline{\Omega}$. It was proved by Smith and Sogge \cite{SmithSogge00} (for odd $N$), Burq \cite{Burq03} and Metcalfe \cite{Metcalfe04}, Metcalfe and Tataru \cite{MetcalfeTataru08}  with this non-trapping assumption, that a local Strichartz estimate implies the corresponding global one.
Let us mention however that the non-trapping assumption is not a necessary condition for the validity of Strichartz estimates (see e.g. \cite{Lafontaine22}).

In this work, we will generalize the result of \cite{SmithSogge00}, \cite{Burq03} and \cite{Metcalfe04},   proving that a local Strichartz estimate for one of the equations above with $V=0$, outside a compact obstacle, implies the corresponding global one, with a (possibly large and negative) potential $V$. We will make an almost optimal short-range decay assumption at infinity for $V$.

We next detail our assumptions on $K$ and $V$. Let $-\Delta_D$ be the Dirichlet Laplacian on $\Omega=\R^N\setminus K$, with domain $H_0^1(\Omega)\cap H^2(\Omega)$.
\begin{assumption}
\label{As:K}
$K\subset \R^N$, with $C^2$ boundary, and $\forall \chi\in C_0^\infty(\mathbb{R}^N)$, 
\begin{equation}
\label{non-trapping}
\max_{{\rm Im}\lambda\neq 0}
\sqrt{1+|\lambda|}\;\| \chi (-\Delta_D-\lambda)^{-1}\chi\|_{L^2(\Omega)\to L^2(\Omega)}<+\infty.
\end{equation}
 \end{assumption}
The estimate \eqref{non-trapping} should be seen as a non-trapping assumption on $K$. 
It holds when $K$ is a non-trapping, smooth obstacle: see \cite[Remark 2.6]{Burq03} 
and reference therein.
\begin{assumption}
\label{As:V}
The potential $V\in C(\overline{\Omega})$ is a real-valued function such that 
\begin{equation}
\label{decayV}
\exists\; \alpha>2, C>0,\;\forall x\in \Omega,\quad |V(x)|\le C(1+|x|)^{-\alpha}.
\end{equation}  
\end{assumption}

Let $L_V=-\Delta+V$ on $L^2(\Omega)$, subject to Dirichlet boundary condition, defined as an unbounded self-adjoint operator on $L^2(\Omega)$ with the same domain $H^2(\Omega)\cap H^1_0(\Omega)$ as $-\Delta_D$.

The general theory on the spectrum   of $L_V$ is classical.
Since $V$ is a Kato potential by \eqref{decayV}, it is relatively $-\Delta_D-$compact and hence the essential spectrum of $L_V$ coincides with that of $-\Delta_D $ by Weyl's theorem. By the decay property \eqref{decayV}, the operator $L_V$ has no  positive eigenvalues embedded into $(0,+\infty)$, see \cite{IoJe03}. 
By a compactness argument,  $L_V$ has a finite number of negative eigenvalues with finite multiplicities \cite{DuKeMe16a}. Consider an orthonormal family $\{Y_j\}_{j=1,\ldots, J}$ of real-valued eigenfunctions corresponding to all these negative eigenvalues (counted with multiplicities). Then,  $\{Y_j(x)\}_{j=1,\ldots,J}$ are $C^2$ and decreasing exponentially as $|x|\to+\infty$.

We define $P_c$ the orthogonal projection to the subspace associated to the continuous spectrum of $L_V$, for the $L^2$ scalar product: 
\begin{equation}
\label{formulaPc}
P_cf=f-\sum_{j=1}^{J}{\rm Re}\Bigl(\int_{\Omega}Y_j f\Bigr)Y_j.
\end{equation}
Note that \eqref{formulaPc} makes sense for $f\in L^{2}(\Omega)$ and for $f\in \dot{H}^1_0(\Omega)$ due to the decay and smoothness properties of the eigenfunctions $Y_j$.

For $(u_0,u_1)\in\mathcal{H}$, define $\mathcal{P}_c(u_0,u_1)=(P_cu_0,P_c u_1)$.


We will make an additional assumption to exclude a singular behaviour of $L_V$ at the bottom of its spectrum $\lambda=0$.
\begin{definition}
	\label{def:resonance} 
	We say that $v$ is a zero-energy state for $L_V$, if there exists a 
	nonzero $v\in H^2_{\mathrm{loc}}(\overline{\Omega})$ such that $L_V v=0$, $v_{\restriction_{\partial\Omega}}=0$ and
	$\langle x \rangle^{-1} v\in L^2(\Omega)$. We say that zero is a resonance of $L_V$ when $L_V$ has a zero-energy state which is not an eigenfunction.
\end{definition}
\begin{assumption}
 \label{As:zero}
 Zero is not an eigenvalue or a resonance of $L_V$.
\end{assumption}
\begin{remark}
\label{R:resonance}
 The choice of the condition $\langle x \rangle^{-1} v\in L^2(\Omega)$ in the definition of a zero-energy state is arbitrary. Indeed, there exist several definitions of zero-resonances in the literature (see for examples  \cite{RodnianskiTao15,DAncona20} in the case $K=\emptyset$). The difference between these definitions is the weighted $L^2$ space which is chosen. By Proposition \ref{P:decay_resonance}, proved in the appendix as a direct consequence of a Stein-Weiss fractional integral estimate \cite{SW58}, these definitions are equivalent to the definition proposed here. 
 \end{remark}

\begin{remark}
\label{R:example}
	When $N\ge 5$, by Proposition \ref{P:decay_resonance} in the appendix, if $V\in C(\overline{\Omega})$ is real-valued and satisfies \eqref{decayV}, zero is never a resonance for $L_V$. In the cases $N=3$ and $N=4$, there exists potential $V$ and obstacles $K$ such that zero is a resonance for $L_V$. Indeed, when $K=\emptyset$, or $K$ is the  Euclidean ball centered at $0$  with radius $\sqrt{N(N-2)}$, 
	$$W=\left( 1+\frac{|x|^2}{N(N-2)} \right)^{1-\frac{N}{2}},$$
	and 
	$$V=-\frac{N+2}{N-2} W^{\frac{4}{N-2}},\quad v=x\cdot \nabla W+\frac{N-2}{2}W$$
	then 
	$$v_{\restriction \partial\Omega}=0,\quad L_Vv=0,\quad \forall \delta >0, \;\la x\ra^{\frac{N}{2}-2-\delta}v\in L^2(\Omega),$$
	and
	$$\la x\ra^{\frac{N}{2}-2} v\notin L^2(\Omega),$$
	so that in particular $v\notin L^2(\Omega)$ if $N\in \{3,4\}$.
\end{remark}
We first state our main result in the case of the wave equation. We will assume that the pair $(p,q)$ is an admissible Strichartz pair for the free wave equation on $\R^N$, and that local Strichartz estimates are also available for the wave equation without potential on $\Omega$:
\begin{assumption}
\label{As:pqwave}
 The pair $(p,q)$ satisfies \eqref{admissible_wave}, with $p\in(2,\infty]$, $q\in [2,\infty)$. Furthermore there exist constants $R>0$ (such that $K\subset \{|x|\leq R-1\}$), $C>0$ such that for any solution of \eqref{Wave_intro}, \eqref{Wave_ID}  with $V=0$,  $f=0$, and $\supp(u_0,u_1)\subset \{|x|\leq R\}$ one has
		\begin{align*}
		\|u\|_{L^p_t([0,1]; L_x^q(\Omega))}+\|(u,\partial_t u)\|_{L^\infty_t([0,1];\HHH)}\le C\|(u_0,u_1)\|_{\HHH}.
		\end{align*}
\end{assumption}


\begin{remark}
\label{R:pqwave}
Due to the boundary, it is rather delicate to determine the exact range of exponents $p,q$ that satisfies Assumption \ref{As:pqwave}. It follows from 
\cite{SmithSogge95} that if $K$ is smooth and strictly convex, every pair $(p,q)$ such that \eqref{admissible_wave} holds and $q<\infty$ also satisfies Assumption \ref{As:pqwave}. By \cite{BSS} if $K$ is a general smooth obstacle, $(p,q)$ satisfy Assumption \ref{As:pqwave} if \eqref{admissible_wave} holds and
\begin{equation}
	\label{condition_BSS}
	\begin{cases}
	q\le \frac{2(N-1)}{N-3}&\text{ if }N\geq 4\\
	q\le 14&\text{ if }N=3.
	\end{cases}
	\end{equation}
Global Strichartz estimates when $K$ is a union of two strictly convex obstacles are proved in \cite{Lafontaine22}. Note that in this case $K$ does not satisfies Assumption \ref{As:K}. Assumption \ref{As:K} is not sufficient neither, at least if one wants the full range of admissible exponents appearing in Theorem \ref{T:free_Strichartz} (see the counter-examples in \cite{Ivanovici1}, \cite{Ivanovici2}).
\end{remark}
\begin{remark}
 When $\Omega$ is a ball, if Assumption \ref{As:zero} holds for the operator $L_V$ restricted to radial functions, then all the results below hold when $(u_0,u_1)$ and $f$ are radial. Note that in this case Assumption \ref{As:pqwave} holds for all pair $(p,q)$ satisfying \eqref{admissible_wave}.
\end{remark}

\begin{theorem}
	\label{T:wave}
	Let $N\geq 3$.
	Let $\Omega$ and $V$ satisfy Assumptions \ref{As:K}, \ref{As:V}, \ref{As:zero}. Let $(p,q)$ such that Assumption \ref{As:pqwave} holds. Then, there is $C>0$ such that for all solutions $u$ of \eqref{Wave_intro}, \eqref{Wave_ID} with $f\in  L^{1} L^{2}$, one has 
	\begin{equation}
	\label{eq:strichartz-nonedpt}
	\|P_c u\|_{L^pL^q}+\|\PPP_c(u,\partial_tu)\|_{L^{\infty}(\R,\HHH)}\le C\Bigl(\|\PPP_c(u_0,u_1)\|_{\mathcal{H}}+\|P_c f\|_{L^{1}L^{2}}\Bigr).
	\end{equation}
\end{theorem}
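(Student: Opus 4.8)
The plan is to follow the classical scheme in which global Strichartz estimates are reduced to a local-in-time Strichartz estimate together with a local smoothing (local energy integrability) estimate, the latter coming from uniform weighted resolvent bounds for $L_V$. First I would reduce to the case where the data and source lie in the range of $P_c$: since $P_c$ is a spectral projection of $L_V$ it commutes with the wave propagators and with the Duhamel operator, so $P_cu$ solves \eqref{Wave_intro}, \eqref{Wave_ID} with data $\PPP_c(u_0,u_1)$ and source $P_cf$ and still satisfies the Dirichlet condition; hence it suffices to prove \eqref{eq:strichartz-nonedpt} for solutions whose data and source lie in the range of $P_c$, where $L_V$ coincides with the nonnegative operator $L_V^+:=L_VP_c$. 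Nonnegativity uses that the negative eigenvalues have been removed and that $0$ is not an eigenvalue (Assumption \ref{As:zero}); combined with the decay of $V$ and Hardy's inequality, the quadratic form $v\mapsto\int_\Omega(|\nabla v|^2+V|v|^2)$ is moreover coercive on the range of $P_c$ in $\dot H^1_0(\Omega)$, which together with energy conservation and the elementary $L^1L^2$ Duhamel bound already yields the $L^\infty_t\HHH$ part of \eqref{eq:strichartz-nonedpt}.

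Fix $s$ with $\tfrac12<s<\tfrac{\alpha-1}{2}$ (a nonempty range since $\alpha>2$). \textbf{Step 1: resolvent bounds.} I would establish
\[
\sup_{\mathrm{Im}\,\lambda\neq0}\sqrt{1+|\lambda|}\;\|\langle x\rangle^{-s}(L_V-\lambda)^{-1}P_c\langle x\rangle^{-s}\|_{L^2(\Omega)\to L^2(\Omega)}<\infty.
\]
For $|\lambda|$ away from $0$ and $\infty$ this is the limiting absorption principle for $L_V$, deduced from Assumption \ref{As:K} for $-\Delta_D$ (upgrading the cut-off $\chi$ to $\langle x\rangle^{-s}$ by an elliptic/interpolation argument) via the resolvent identity $(L_V-\lambda)^{-1}=(-\Delta_D-\lambda)^{-1}-(-\Delta_D-\lambda)^{-1}V(L_V-\lambda)^{-1}$, the decay $|V|\lesssim\langle x\rangle^{-\alpha}$ with $\alpha>2s$ making the relevant operator compact so that Fredholm theory and the absence of positive embedded eigenvalues close the argument; for $|\lambda|\to\infty$ the extra $\sqrt{|\lambda|}$ is already provided by Assumption \ref{As:K}, $V$ being a small perturbation at high energy. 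The hard part, and the main obstacle of the whole proof, is the regime $\lambda\to0$: there one must combine the low-energy expansion of the free Dirichlet resolvent on $\Omega$ with a Fredholm analysis of the operator $I+\langle x\rangle^{-s}V(-\Delta_D)^{-1}\langle x\rangle^{s}$ and its analogue at energy $\lambda$, whose invertibility on the relevant weighted space is equivalent to $0$ being neither an eigenvalue nor a resonance of $L_V$; this is exactly where Assumption \ref{As:zero}, the weighted-$L^2$ characterization of zero-energy states (Definition \ref{def:resonance}, Remark \ref{R:resonance}) and again the short-range bound $\alpha>2$ enter essentially.

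\textbf{Step 2: local smoothing.} From Step 1, a standard $TT^*$/Kato-smoothing argument yields, for solutions with $P_c$-data,
\[
\|\langle x\rangle^{-s}\nabla_{t,x}u\|_{L^2(\R\times\Omega)}+\|\langle x\rangle^{-1-s}u\|_{L^2(\R\times\Omega)}\lesssim\|\PPP_c(u_0,u_1)\|_{\HHH}+\|P_cf\|_{L^1L^2},
\]
the homogeneous part coming from Kato-smoothness of $\langle x\rangle^{-s}$ for $\sqrt{L_V^+}$ (the undifferentiated term using in addition the coercivity of $L_V^+$ and Hardy's inequality), and the inhomogeneous part from Minkowski's inequality and the $L^2$-boundedness of the propagators. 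In particular $\|\chi(u,\partial_tu)\|_{L^2(\R,\HHH)}$ is controlled by the right-hand side for every $\chi\in C_0^\infty(\R^N)$.

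\textbf{Step 3: gluing.} Pick $\chi\in C_0^\infty(\R^N)$ with $\chi\equiv1$ on $\{|x|\le R-1\}\supset K$ and $\supp\chi\subset\{|x|\le R\}$, and write $u=u_{\rm loc}+u_{\rm ext}$ with $u_{\rm loc}=\chi u$, $u_{\rm ext}=(1-\chi)u$. Extended by zero inside $K$, $u_{\rm ext}$ solves $\partial_t^2u_{\rm ext}-\Delta u_{\rm ext}=(1-\chi)f+[\Delta,\chi]u-(1-\chi)Vu$ on $\R^N$ with data $(1-\chi)(u_0,u_1)$, whose $\HHH(\R^N)$-norm is $\lesssim\|\PPP_c(u_0,u_1)\|_{\HHH}$. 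I would then apply Theorem \ref{T:free_Strichartz}, in the form that also allows the source to be measured in the space $\langle x\rangle^sL^2(\R\times\R^N)$ dual to the free local smoothing norm (this refinement follows from free Strichartz, free Kato smoothing on $\R^N$, a $TT^*$ argument and Christ--Kiselev's lemma, the latter applicable since $p>2$). Since $[\Delta,\chi]u$ is a first-order term supported in $\{R-1\le|x|\le R\}$ and $|(1-\chi)Vu|\lesssim\langle x\rangle^{-\alpha}|u|$ with $\alpha>2s+1$, Step 2 bounds these two source terms in $\langle x\rangle^sL^2(\R\times\R^N)$ by the right-hand side of \eqref{eq:strichartz-nonedpt}, so that $\|u_{\rm ext}\|_{L^pL^q}$ is controlled as desired. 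The piece $u_{\rm loc}$ is supported in $\{|x|\le R\}$ and solves $(\partial_t^2-\Delta+V)u_{\rm loc}=\chi f-[\Delta,\chi]u$ on $\Omega$ with Dirichlet condition; on each interval $[n,n+1]$, $n\in\Z$, I would apply Assumption \ref{As:pqwave}, upgraded to include the potential (harmless on a bounded time interval, since $V$ is bounded there and $u_{\rm loc}$ is compactly supported), which gives
\[
\|u_{\rm loc}\|_{L^p([n,n+1];L^q)}\lesssim\|(u_{\rm loc},\partial_tu_{\rm loc})(n)\|_{\HHH}+\|\chi f\|_{L^1([n,n+1];L^2)}+\|[\Delta,\chi]u\|_{L^2([n,n+1];L^2)}.
\]
A routine energy estimate on $[n-1,n+1]$ together with Step 2 controls $\sum_{n\in\Z}\|(u_{\rm loc},\partial_tu_{\rm loc})(n)\|_{\HHH}^2$ by the square of the right-hand side of \eqref{eq:strichartz-nonedpt}; raising the last displayed inequality to the power $p\ge2$, summing over $n$, and using $\ell^2\hookrightarrow\ell^p$ and $\sum_n\|\chi f\|_{L^1([n,n+1];L^2)}^p\le\|\chi f\|_{L^1L^2}^p$ then controls $\|u_{\rm loc}\|_{L^pL^q}$. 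Adding the two contributions gives \eqref{eq:strichartz-nonedpt}. The only ingredient genuinely new with respect to the potential-free theory of Smith--Sogge, Burq and Metcalfe(--Tataru) is the treatment of the term $(1-\chi)Vu$ in the exterior problem, which is absorbed by the weighted local smoothing precisely because $\alpha>2$.
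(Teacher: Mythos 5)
Your overall strategy is sound, but it follows a genuinely different route from the paper's. The paper (Subsection~\ref{sub:mainproof}) treats the global Strichartz estimate for $-\Delta_D$ on $\Omega$ as a black box (Smith--Sogge, Burq, Metcalfe), proves a mixed weighted--$L^2$/Strichartz estimate for the potential-free exterior problem via duality and Christ--Kiselev, and then feeds $-Vu$ back in as a Duhamel source, closing the argument by a second application of Christ--Kiselev with the $L_V$-propagator at the $L^2\times\dot H^{-1}$ level (Corollary~\ref{cor:H-1_wave}). You instead re-run the Smith--Sogge/Burq cutoff decomposition from scratch with the potential built in: the exterior piece $u_{\rm ext}$ is a Euclidean wave with sources in $\langle x\rangle^sL^2$ controlled by local smoothing, while the interior piece $u_{\rm loc}$ is treated by time-slicing Assumption~\ref{As:pqwave}, an $\ell^2$-summability argument for the energies at the endpoints of the unit intervals, and the embedding $\ell^2\hookrightarrow\ell^p$. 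Your version is more self-contained (it does not invoke the $V=0$ local-to-global result as a black box, but re-derives it), at the cost of several auxiliary refinements (the $\langle x\rangle^sL^2$-source Euclidean Strichartz, the $\ell^2$ energy summation, the absorption of $V$ on unit time intervals); the paper's is shorter given the black box, and the Christ--Kiselev reduction makes the bookkeeping cleaner.

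One step deserves a concrete warning. In your Step~2 you assert that the global weighted energy estimate $\|\langle x\rangle^{-s}\nabla_{t,x}u\|_{L^2(\R\times\Omega)}\lesssim\|\PPP_c(u_0,u_1)\|_{\HHH}$ for the homogeneous equation follows from ``a standard $TT^*$/Kato-smoothing argument'' for $\sqrt{L_V^+}$. The gradient component does not follow in the routine way: while $\langle x\rangle^{-s}\partial_t u$ is handled by applying Kato smoothing to $\sqrt{L_V^+}u_0$ and $u_1$ (using that $\|\sqrt{L_V^+}\cdot\|_{L^2}\approx\|\cdot\|_{\dot H^1_0}$ on the range of $P_c$ by Lemma~\ref{lem:equiv_norm}), the operator $\nabla$ does not commute with $\sqrt{L_V^+}$, and the $TT^*$ framework in the energy space $\dot H^1_0\times L^2$ interacts poorly with the multiplication weight. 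The paper flags exactly this subtlety (beginning of Subsection~\ref{sub:L2wave}: an error of this type was found in Burq's original $TT^*$ argument) and circumvents it in the proof of Theorem~\ref{T:L2} by a finite-speed-of-propagation decomposition reducing to the Euclidean free wave (Step~3 of Subsection~\ref{sub:L2wave}), combined with the already-proved inhomogeneous weighted estimate. Since your Step~3 genuinely uses the gradient part of the smoothing (to bound $[\Delta,\chi]u$ in $\langle x\rangle^sL^2$ and to sum the endpoint energies), you should either import Theorem~\ref{T:L2} wholesale or replace your Step~2 by the decomposition argument; the plain $TT^*$ route as stated is not justified.
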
 
We refer to Appendix \ref{AA:lower} for Strichartz estimates at other levels of regularity.

We next state our result for the Schr\"odinger equation. As in the case of the wave equation, we will assume that the Strichartz exponents appearing in the inequality are admissible for the free equation on $\R^N$, and that the local Strichartz estimates are available for the Schr\"odinger equation without potential on $\Omega$. 

\begin{assumption}
\label{As:pqLS}
The pair $(p,q)$ satisfies \eqref{admissible_LS} with $p\in (2,\infty]$, $q\in [2,\infty)$. Furthermore,  there exist a constant  $C>0$, and a function $\chi\in C_0^{\infty}(\R^N)$ such that $\chi=1$ in a neighborhood of $K$ and for any $\varphi_0\in L^2(\Omega)$,
\begin{equation}
\label{local_Strichartz_LS}
\|\chi e^{it\Delta_D}\varphi_0\|_{L^{p}([0,1],L^{q})}\leq C\|\varphi_0\|_{L^2}.
\end{equation}
\end{assumption}
\begin{remark}
\label{R:pqLS}
By \cite{IvanoviciNLS}, if $K$ is a strictly convex obstacle, every pair $(p,q)$ such that $p>2$ and \eqref{admissible_LS} holds satisfies Assumption \ref{As:pqLS}. Indeed Ivanovici proves in \cite{IvanoviciNLS} a global Strichartz inequality.\footnote{Note that Theorem \ref{T:LS} implies that for the Schr\"odinger equation without potential outside a non-trapping obstacle, the local Strichartz inequality \eqref{local_Strichartz_LS} is equivalent to a global one.} Assumption \ref{As:pqLS} is also valid for the full set of Strichartz exponents where $K$ is the union of several strictly convex obstacles (for which however, Assumption \ref{As:K} does not hold). See \cite{Lafontaine18P}.
\end{remark}

\begin{theorem}
 \label{T:LS}
 	Let $\Omega$ and $V$ satisfy Assumptions \ref{As:K}, \ref{As:V}, \ref{As:zero}. Let $(p_1,q_1)$, $(p_2,q_2)$ such that Assumption \ref{As:pqLS} holds for both pairs. Then, there is $C>0$ such that for all solutions $\varphi$ of \eqref{LS_intro}, \eqref{LS_ID} with $\psi\in  L^{p_2'}L^{q_2'}$, one has
	\begin{equation}
	\label{eq:strichartzLS}
	\|P_c \varphi\|_{L^{p_1}L^{q_1}}\le C\Bigl(\|P_c\varphi_0\|_{L^2}+\|P_c \psi\|_{L^{p_2'}L^{q_2'}}\Bigr).
	\end{equation}
\end{theorem}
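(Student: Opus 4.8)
The plan is to follow the classical strategy of Smith–Sogge, Burq, and Metcalfe for transferring local Strichartz estimates to global ones outside an obstacle, but with the potential $V$ incorporated via the resolvent estimates that (according to the abstract) are the technical heart of the paper. I would split the solution $\varphi=e^{-itL_V}P_c\varphi_0 + \text{(Duhamel term with }\psi)$ and, by Christ–Kiselev and the usual $TT^*$ duality, reduce everything to the homogeneous estimate $\|P_c e^{-itL_V}\varphi_0\|_{L^{p_1}L^{q_1}}\lesssim \|\varphi_0\|_{L^2}$; the inhomogeneous estimate \eqref{eq:strichartzLS} then follows from the homogeneous ones for the pairs $(p_1,q_1)$ and $(p_2,q_2)$ together with a crude $L^2\to L^2$ energy bound (note $e^{-itL_V}$ is unitary, so $P_c$ commutes with it). So the real task is the homogeneous inequality.

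For the homogeneous inequality I would use a spatial cutoff $\chi\in C_0^\infty$ equal to $1$ near $K$ and near the support of $V$ (enlarging the $\chi$ from Assumption~\ref{As:pqLS} if necessary), and write $\varphi = \chi\varphi + (1-\chi)\varphi$. The term $(1-\chi)\varphi$ solves, up to commutator terms supported where $\nabla\chi$ lives, a free Schrödinger equation on $\R^N$ (the potential is negligible there), so by finite-in-time-propagation-type arguments and the free Strichartz estimates of Theorem~\ref{T:free_Strichartz} on $\R^N$ it is controlled provided we can control the commutator source terms; those commutator terms are spatially localized and are handled by the local smoothing / local energy decay coming from the resolvent estimate \eqref{non-trapping} perturbed by $V$ — i.e. one needs $\|\langle x\rangle^{-s} e^{-itL_V} P_c\varphi_0\|_{L^2_t L^2_x}\lesssim \|\varphi_0\|_{L^2}$ for suitable $s>1/2$, which is the smoothing effect asserted in the abstract and should be a consequence of the limiting absorption principle for $L_V$ (uniform weighted resolvent bounds on the continuous spectrum, including uniformity up to $\lambda=0$, which is exactly where Assumption~\ref{As:zero} enters). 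The compactly supported piece $\chi\varphi$ is handled on each unit time interval by the local estimate \eqref{local_Strichartz_LS} for the free Dirichlet Schrödinger flow, after again paying a commutator/Duhamel price absorbed by the local smoothing estimate, and then summed in $\ell^{p_1}$ over integer time intervals using once more the global-in-time smoothing bound (so the $\ell^{p_1}$ summability of the local norms follows from an $\ell^2$, hence $\ell^{p_1}$ since $p_1\ge 2$, bound on the localized pieces).

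Concretely the steps, in order, would be: (1) record that $P_c$ commutes with $e^{-itL_V}$ and that it suffices to prove the homogeneous estimate; (2) establish the limiting absorption principle for $L_V = -\Delta+V$ on $\Omega$ with uniform weighted $L^2$ resolvent bounds on $[0,\infty)$, using \eqref{non-trapping} for the obstacle, the short-range decay \eqref{decayV} as a relatively compact perturbation, absence of positive eigenvalues, and Assumption~\ref{As:zero} to get uniformity near $\lambda=0$ (with the Stein–Weiss input of Remark~\ref{R:resonance} / Proposition~\ref{P:decay_resonance} clarifying the resonance threshold); (3) deduce the global-in-time local smoothing estimate $\|\langle x\rangle^{-s}P_c e^{-itL_V}\varphi_0\|_{L^2_tL^2_x}\lesssim\|\varphi_0\|_{L^2}$ from step (2) by the usual $TT^*$/Kato-theory argument; (4) decompose $\varphi=\chi\varphi+(1-\chi)\varphi$, treat $(1-\chi)\varphi$ by free $\R^N$ Strichartz plus step (3) for the $[\Delta,\chi]\varphi$-type source; (5) treat $\chi\varphi$ by \eqref{local_Strichartz_LS} on unit intervals plus step (3), summing in $\ell^{p_1}_{\text{time}}$; (6) assemble, then pass from homogeneous to inhomogeneous via Christ–Kiselev and duality. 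The main obstacle I expect is step (2)–(3): getting the limiting absorption principle for $L_V$ that is genuinely uniform in $\lambda$ all the way down to (and through) the bottom of the spectrum $\lambda=0$, since this is where the no-resonance/no-eigenvalue Assumption~\ref{As:zero} must be leveraged, and low-energy resolvent analysis on an exterior domain (as opposed to high-energy, where \eqref{non-trapping} directly applies) requires a separate and somewhat delicate argument; once the uniform resolvent bound is in hand, the rest is a fairly standard parametrix-free patching argument.
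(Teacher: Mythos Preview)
Your proposal is correct and follows essentially the same route as the paper: reduce to the homogeneous estimate by duality and Christ--Kiselev, derive a global smoothing effect for $e^{-itL_V}P_c$ from the uniform limiting absorption principle for $L_V$ (this is indeed the technical core, Proposition~\ref{pp:alpND}), split $\varphi=\chi\varphi+(1-\chi)\varphi$, treat the far piece by free $\R^N$ Strichartz with commutator sources controlled by smoothing, and treat the near piece by the local estimate \eqref{local_Strichartz_LS} on unit intervals summed via $\ell^2\hookrightarrow\ell^{p_1}$. Two small corrections you should make: (i) $V$ is not compactly supported, so $(1-\chi)\varphi$ still carries a source $V(1-\chi)\varphi$, which the paper keeps and bounds via the decay \eqref{decayV} together with the smoothing effect; (ii) the smoothing effect must be taken at the $H^{1/2}_D$ level (not merely $L^2$) so that the gradient commutator $\nabla\chi\cdot\nabla\varphi$ lands in the dual smoothing space $L^2_tH^{-1/2}$---this is exactly \eqref{smoothing} in Theorem~\ref{T:L2}, and the paper combines it with the mixed free-$\R^N$ estimate \eqref{mixteRN} rather than the pure dual Strichartz norm.
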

Theorem \ref{T:LS} is exaclty the generalization of \cite[Theorem 1.4]{RodnianskiSchlag04} outside a non-trapping obstacle. See also \cite{MaMeTa08} for more general, time dependent, perturbations of the Laplace operator.

\begin{remark}
For general nontrapping obstacles, Strichartz estimates with loss of derivatives (see \cite{Anton08}) or at other levels of regularities than the one given by Assumption \ref{As:pqLS}
 (see cite \cite{BlSmSo12}) are available. We will also prove the analog of Theorem \ref{T:LS} for these general Strichartz estimates:
see Section \ref{S:LS} below, and in particular Assumption \ref{As:pqLS'} and Theorem \ref{T:LS'}.
 \end{remark}

The proofs of Theorems \ref{T:wave} and \ref{T:LS} are based on $L^2$ space-time integrability properties of solutions of equations \eqref{Wave_intro} and \eqref{LS_intro}, that we state for their own interest.

For $\delta\in [0,1]$, we will denote by $H^{\delta}_D$ the complex interpolate between $L^2(\Omega)$ and $H^{1}_0(\Omega)$, with exponent $\delta$, and $H^{-\delta}_D$ its dual with respect to the pivot space $L^2$. We note that these spaces are invariant by the Schr\"odinger flows $e^{it\Delta_D}$ and $e^{itL_V}$, since $H^1_0(\Omega)$ and $L^2(\Omega)$ are.

\begin{theorem}
 \label{T:L2}
 Let $\Omega$ and $V$ satisfy Assumptions \ref{As:K}, \ref{As:V} and \ref{As:zero}. 
 Let $\nu_1,\nu_2>1/2$ with $\nu_1+\nu_2>2$.  
\begin{description}
\item[$L^2$-integrability of the weighted energy for the wave equation]
{Assume furthermore $(u_0,u_1)=(0,0)$ or $\nu_1+\alpha>7/2$.}
For any solution $u$ of \eqref{Wave_intro}, \eqref{Wave_ID}, such that $\la x\ra^{\nu_2}P_cf\in L^2(\R\times\Omega)$, 
\begin{multline}
	\label{eq:lse}
	\lVert  \langle x\rangle^{-\nu_1}\PPP_c\vec{u}\rVert_{L^2_t(\R; \dot{H}^1_0(\Omega)\times L^2(\Omega))}\\
	\lesssim \Bigl(\lVert \PPP_c(u_0,u_1)\rVert_{\dot{ H}^1_0(\Omega)\times L^2(\Omega)}+\lVert \langle x\rangle^{\nu_2} P_c f\rVert_{L^2_{t,x}(\R\times\Omega)}\Bigr).
	\end{multline}
	In the case where $(u_0,u_1)=(0,0)$ one also has
	\begin{equation}
	 \label{eq:lse_bis}
	\lVert  \langle x\rangle^{-\nu_1}\PPP_c\vec{u}\rVert_{L^2_t(\R\times \Omega)}\\
	\lesssim \lVert \langle x\rangle^{\nu_2} P_c f\rVert_{L^2_{t,x}(\R\times\Omega)}.
	\end{equation}
\item[Smoothing effect for the Schr\"odinger equation]

For any solution $\varphi$ of \eqref{LS_intro}, \eqref{LS_ID}, with $\la x\ra^{\nu_2} P_c \psi\in L^2(\R,H^{-1/2}_D)$, 
\begin{equation}
\label{smoothing}
\left\| \la x\ra^{-\nu_1} P_c\varphi\right\|_{L^2(\R,H^{1/2}_D)}\lesssim \|P_c \varphi\|_{L^2(\Omega)}+\|\la x\ra^{\nu_2} P_c \psi\|_{L^2(\R,H_D^{-1/2})}. 
\end{equation}
 \end{description}
\end{theorem}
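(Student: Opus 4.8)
\textbf{Proof plan for Theorem \ref{T:L2}.}

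The plan is to reduce both statements to resolvent estimates for $L_V$ on the continuous spectrum. The key quantity is the weighted resolvent $\la x\ra^{-\nu_1}(L_V-\lambda)^{-1}\la x\ra^{-\nu_2}$ for $\lambda$ in a neighborhood of the positive half-line, together with its behaviour as $\lambda\to 0$ and $\lambda\to\infty$. The first input is the uniform (in $\lambda$) bound on this operator, which should be established earlier in the paper: away from $\lambda=0$ it follows from Assumption \ref{As:K} (non-trapping resolvent bound for $-\Delta_D$) perturbed by the short-range potential $V$ via \eqref{decayV}; near $\lambda=0$ it uses Assumption \ref{As:zero} (absence of a zero eigenvalue or resonance) to rule out a singularity, where the precise weights $\nu_1,\nu_2>1/2$ with $\nu_1+\nu_2>2$ and — for the wave equation with nonzero data — the extra condition $\nu_1+\alpha>7/2$ enter to control the low-frequency expansion. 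Given such uniform weighted resolvent bounds, I would invoke a Kato-type smoothing / $TT^*$ argument: the Fourier transform in $t$ of $P_c\varphi$ (or $P_c\vec u$) evaluated at frequency $\tau$ is $(L_V-\tau)^{-1}$ applied to data, and the claimed $L^2_{t,x}$ bounds are Plancherel in $t$ combined with the pointwise-in-$\tau$ resolvent estimates, exactly as in the abstract Kato smoothing principle.

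For the \textbf{Schr\"odinger smoothing estimate} \eqref{smoothing}, I would write $P_c\varphi(t)=e^{itL_V}P_c\varphi_0 - i\int_0^t e^{i(t-s)L_V}P_c\psi(s)\,ds$, take the Fourier transform in $t$, and observe that the homogeneous part contributes $\delta(\tau - L_V)$-type spectral measure while the Duhamel part contributes the boundary value $(L_V-\tau-i0)^{-1}\widehat{P_c\psi}(\tau)$. The $H^{1/2}_D$ and $H^{-1/2}_D$ weights match the standard gain of half a derivative for Schr\"odinger smoothing; concretely one should establish $\|\la x\ra^{-\nu_1}(L_V+1)^{1/4}(L_V-\tau\pm i0)^{-1}(L_V+1)^{1/4}\la x\ra^{-\nu_1}\|_{L^2\to L^2}\lesssim 1$ uniformly in $\tau$, which (modulo handling the domain of $L_V$ and the equivalence of $H^{\pm 1/2}_D$ with fractional powers of $L_V+1$) follows from the weighted resolvent bounds noted above, interpolated against the trivial $L^2$ bound. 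Then Plancherel in $t$ gives \eqref{smoothing}. The homogeneous term $\|\la x\ra^{-\nu_1}e^{itL_V}P_c\varphi_0\|_{L^2_t H^{1/2}_D}\lesssim\|P_c\varphi_0\|_{L^2}$ is the classical Kato smoothing bound, again a consequence of the limiting absorption principle for $L_V$ on the continuous spectrum.

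For the \textbf{wave equation}, I would pass to the first-order system for $\vec u=(u,\partial_t u)$, with generator $\begin{pmatrix}0&1\\\Delta_D-V&0\end{pmatrix}$, or equivalently work with $\sqrt{L_V}\,P_c$ (well-defined and with the correct domain after projecting off the negative eigenvalues). Writing $u=\cos(t\sqrt{L_V})u_0+\frac{\sin(t\sqrt{L_V})}{\sqrt{L_V}}u_1+\int_0^t\frac{\sin((t-s)\sqrt{L_V})}{\sqrt{L_V}}f(s)\,ds$ on the range of $P_c$, the Fourier transform in $t$ again reduces the weighted $L^2_{t,x}$ norm of $\PPP_c\vec u$ to uniform-in-$\tau$ bounds on $\la x\ra^{-\nu_1}(L_V-\tau^2\mp i0)^{-1}\la x\ra^{-\nu_2}$ (here $\tau$ is the dual variable, and the change of variables $\lambda=\tau^2$ is where the low-frequency weight condition $\nu_1+\nu_2>2$ is used to absorb the Jacobian and the $1/\sqrt{L_V}$ factors). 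The inhomogeneous estimate \eqref{eq:lse_bis} with $(u_0,u_1)=(0,0)$ is the cleanest: it is precisely the resolvent bound integrated against $\widehat{P_c f}$, with no extra decay on the data needed. For \eqref{eq:lse} with nonzero data one additionally needs $\|\la x\ra^{-\nu_1}\cos(t\sqrt{L_V})P_c u_0\|_{L^2_t L^2}$ and the analogous sine terms to be controlled by $\|\PPP_c(u_0,u_1)\|_{\HHH}$; the low-frequency part of the spectral measure forces the condition $\nu_1+\alpha>7/2$ (the extra decay of $V$ improves the expansion of the resolvent near $0$, which is needed because the free data are merely in $\dot H^1\times L^2$ with no weight).

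The \textbf{main obstacle} is the low-frequency analysis: establishing the uniform weighted bound for $(L_V-\lambda)^{-1}$ as $\lambda\to 0$ along the continuous spectrum, including the boundary case $\lambda=0$. This requires a careful expansion of the resolvent near zero energy (of Jensen--Kato type, adapted to the exterior domain with Dirichlet conditions), using Assumption \ref{As:zero} to guarantee invertibility of the relevant operator in the expansion, and tracking exactly how the weights $\nu_1,\nu_2$ and the decay rate $\alpha$ enter. The Stein--Weiss estimate invoked in Remark \ref{R:resonance} (Proposition \ref{P:decay_resonance}) should handle the decay properties of putative zero-energy states, which is what makes the weight condition $\langle x\rangle^{-1}v\in L^2$ the natural threshold. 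The high-frequency regime, by contrast, is governed directly by Assumption \ref{As:K} and a standard short-range perturbation argument, and is comparatively routine.
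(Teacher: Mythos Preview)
Your plan for the Schr\"odinger smoothing \eqref{smoothing} and for the inhomogeneous wave estimate \eqref{eq:lse_bis} is essentially the paper's: both are reduced via Plancherel in $t$ to the uniform weighted resolvent bound of Proposition~\ref{pp:alpND} (upgraded to $L^2\to H^1_0$ by an integration by parts, then interpolated to $H^{-1/2}_D\to H^{1/2}_D$ for Schr\"odinger). No disagreement there.

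The divergence is in the \emph{homogeneous} wave case ($f=0$, $(u_0,u_1)\neq 0$). You propose to write the solution via $\cos(t\sqrt{L_V})$, $\sin(t\sqrt{L_V})/\sqrt{L_V}$ and run a $TT^*$/spectral-measure argument. The paper explicitly avoids this route, pointing out a gap in the published $TT^*$ argument of Burq \cite{Burq03} for precisely this case: the norm $\|\mathcal{A}^s\cdot\|_{L^2}$ used there is not adapted to $\dot{\mathcal{H}}^s$, which spoils the adjoint formula underlying the $TT^*$ step. The authors say the argument can be repaired but not easily, and instead give an elementary proof: compare $P_cu$ to the free solution $v_F$ on $\R^N$ (with data extended by zero across $\partial\Omega$), write $P_cu=(1-\chi)v_F+a+b$ where $a$ has zero data and is handled by the inhomogeneous estimate already proved, and $b$ has compactly supported data so that finite speed of propagation lets one reduce to the inhomogeneous case again after a time cutoff. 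The free-wave contribution is controlled by the weighted $L^2$ decay in Appendix~\ref{A:free_weighted}.

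This also changes the origin of the condition $\nu_1+\alpha>7/2$. It does \emph{not} come from a low-frequency Jensen--Kato resolvent expansion as you suggest (Proposition~\ref{pp:alpND} requires only $\nu_1,\nu_2>1/2$, $\nu_1+\nu_2>2$, with no extra $\alpha$-dependence). It arises in the comparison argument: the term $\la x\ra^{\nu_2}Vv_F$ must lie in $L^2_{t,x}$, which by Appendix~\ref{A:free_weighted} forces $\nu_2-\alpha<-3/2$; combined with $\nu_1+\nu_2>2$ this gives $\nu_1>7/2-\alpha$. So in the paper the condition is an artifact of the proof rather than a threshold in the spectral measure. If you pursue the $TT^*$ route you would need to confront the adjoint/norm issue flagged above and would likely arrive at a different (possibly weaker) constraint on $\nu_1$.
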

Estimates \eqref{eq:lse} and \eqref{smoothing} are classical when $V=0$ at the exterior of a non-trapping obstacle $K$. In this context they can be proved as a consequence of the resolvent estimate \eqref{non-trapping}. For the wave equations outside a convex obstacle, the study of local energy decay goes back to the works of Morawetz, see e.g. \cite{Morawetz61}. A weak form of \eqref{eq:lse} is proved in \cite[Section 2]{MRS77}, \cite{Burq03}. Note that in these works, the weight $\la x\ra^{-\nu_1}$ is replaced by a truncation, and the right-hand side is compactly supported. It is important for us to obtain the estimate with the weights $\la x\ra^{-\nu_1}$, $\la x\ra^{\nu_2}$ and  the conditions on $\nu_1,\nu_2$ above, to obtain Strichartz estimates for potentials satisfying the decay \eqref{decayV} in Assumption \ref{As:V}.

The global-in-time smoothing effect for the Schr\"odinger equation on the Euclidean space goes back to Kato's theory \cite{Kato66} (see also \cite{Ve88}). The inequality \eqref{smoothing} with $V=0$ outside a non-trapping obstacle is proved in \cite{BuGeTz04a} (in this article, the polynomial weight are replaced by truncations, however the same proof, together with Proposition \ref{pp:VzeroND} below, gives the exact estimate \eqref{smoothing}).

The fact that Theorem \ref{T:L2} and the Strichartz estimates for the equations without potential imply global Strichartz estimates for the equations with a potential is quite standard, see e.g. the section 4 of \cite{RodnianskiSchlag04} for the case of the Schr\"odinger equations. The idea to use \eqref{eq:lse} as a substitute of \eqref{smoothing} for the wave equation comes from the work of N. Burq \cite{Burq03}. In both cases, the key ingredients of the proof are Theorem \ref{T:L2} and an inequality combining Strichartz estimates and the inequalities of Theorem \ref{T:L2} in the case $V=0$ (see \eqref{mixed_Delta_D}, \eqref{LS_mixte}). The proof of this mixed inequality is based on a Lemma of Christ and Kiselev's  \cite{ChKi01}. With the same proof, combining Theorem \ref{T:wave} (respectively Theorem \ref{T:LS}) with the $L^2$ estimates of Theorem \ref{T:L2}, one obtains mixed inequalities combinining the Strichartz estimates and the $L^2$ integrability of the weighted energy (respectively the smoothing effect). These estimates are stated in Appendix \ref{AA:mixed}.

Theorem \ref{T:L2} follows in a standard manner from a uniform resolvent estimate \eqref{eq:alpND}. The core of the proof of Theorem \ref{T:L2} is the proof of the resolvent estimate, which we carry out in Section \ref{S:uniform}. The proof is purely perturbative (using the corresponding inequality in the case $V=0$) for large $\re \lambda$. For $\eps\leq \re \lambda <C$, the proof relies ultimately on the absence of embedded eigenvalues in the positive spectrum, proved in \cite{IoJe03}. For $\lambda$ close to $0$, we use Assumption \ref{As:zero} on the absence of zero energy state.

Theorems \ref{T:wave} and \ref{T:LS} might be known in the community,
however we were not able to find complete proofs of these results.
Let us mention the articles \cite{MetcalfeTataru12} for the wave equation, and \cite{MaMeTa08} for the Schr\"odinger equation, where dispersive estimates are obtained for perturbations of the Euclidean setting. These articles focus on (very general) variable coefficients Schr\"odinger and wave operators on $\R^N$, and although the method should adapt to the case of the exterior domain, as mentioned in \cite[Subsection 1.2.2]{MaMeTa08}, no proofs of the exact statements of Theorems \ref{T:wave} and \ref{T:LS} are given there. We also mention \cite[Theorem 3]{MetcalfeTataru08} were the $L^2$ integrability of the energy of the wave equation is obtained outside a star-shaped obstacle, for a quite general perturbation of $-\Delta$ which include operators of the form $-\Delta+V$, where $V$ satisfies a smallness assumption (precluding the existence of eigenvalues).

We finish our introduction by discussing the optimality of our result and open questions. In full generality, the decay assumption \eqref{decayV} is almost optimal. Indeed, an adaptation of \cite{Duyckaerts07a} gives a smooth positive potential, with decay $(\log |x|)|x|^{-2}$, such that any local dispersive estimate fails. Let us mention however that it is possible to prove Strichartz estimates for the Schr\"odinger equations with some long-range repulsives potential, including $V=|x|^{-\mu}$, $0<\mu<2$ (see \cite{Mizutani20}). We do not know if Strichartz estimates holds for such potentials outside an obstacle satisfying our assumptions. 

As mentioned above, global Strichartz estimates without potential hold outside the union of strictly convex obstacles, for which assumption \ref{As:K} does not hold, as proved in \cite{Lafontaine18P} (Schr\"o\-din\-ger equation) and \cite{Lafontaine22} (Wave equation). It is reasonable to think that the Strichartz estimates remain valid in this geometrical setting with a potential satisfying Assumption \ref{As:V},  however our proof breaks down in this case.

The rest of the article is as follows: in Section \ref{S:uniform}, we prove the resolvent estimate which is needed to prove Theorem \ref{T:L2}. Section \ref{S:wave} is devoted to the wave equation, and Section \ref{S:LS} to the Schr\"odinger equation. Appendix \ref{A:free_weighted} concerns decay estimates for the free wave equations. Appendix \ref{A:zero} recalls results about the decay of energy states. Finally, in Appendix  \ref{A:ChristKiselev}, we recall Christ and Kiselev Lemma and obtain, as a consequence, of this lemma several dispersive estimates for equations \eqref{Wave_intro} and \eqref{LS_intro}.

\subsection*{Acknowledgment}
This work was initiated during Jianwei Yang's visit in the LAGA (Universit\'e Sorbonne Paris Nord) in 2018-2019, as an international chair of the Labex MME-DII.

The authors would like to thank the anonymous referee of a previous version of this article for his suggestions.

\section{The uniform limiting absorption principle for $L_V$}
\label{S:uniform}
The proof of Theorem \ref{T:L2} is  based on a uniform  estimate on the resolvent $R_V(\lambda):=(L_V-\lambda)^{-1}$. This is the main result of this section. The proof of Theorem \ref{T:L2} is completed in Subsection \ref{sub:L2wave} for the wave equation and in Subsection \ref{sub:prelim} for the Schr\"odinger equation.
\begin{proposition}
	\label{pp:alpND}
	Let $N\geq 3$, $V$ and $K$ satisfy Assumptions \ref{As:K}, \ref{As:V} and \ref{As:zero}. Let  $-2a:=\sup \sigma_{\rm pp}(L_V)<0$. Then, for all $\nu_1,\nu_2>1/2$ with 
	$\nu_1+\nu_2>2$, we have
	\begin{equation}
	\label{eq:alpND}
	\sup_{\substack{\re \lambda>-a\\ \im \lambda\neq 0}}
	\sqrt{|\lambda|+1}
	\;	\|\langle x\rangle^{-\nu_1}
	(L_V-\lambda)^{-1}\langle x\rangle^{-\nu_2}\|_{L^2(\Omega)\to L^2(\Omega)}<+\infty,
	\end{equation}
	where $\langle x\rangle=(1+|x|^2)^\frac{1}{2}$.
\end{proposition}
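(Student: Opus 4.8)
The plan is to prove the uniform resolvent estimate \eqref{eq:alpND} by splitting the spectral parameter $\lambda$ into three regimes --- a high-frequency (large $\re\lambda$) regime, an intermediate regime bounded away from $0$, and a low-frequency regime near $\lambda=0$ --- and in each regime reduce to the known estimate \eqref{non-trapping} for $-\Delta_D$ and to the abstract tools of the limiting absorption principle (the resolvent identity, Fredholm theory, and the absence of embedded and zero eigenvalues/resonances). Throughout I would write $R_0(\lambda)=(-\Delta_D-\lambda)^{-1}$ and $R_V(\lambda)=(L_V-\lambda)^{-1}$, and would systematically use the weighted space $L^2_{\nu}:=\langle x\rangle^{-\nu}L^2(\Omega)$; the goal is to bound $\langle x\rangle^{-\nu_1}R_V(\lambda)\langle x\rangle^{-\nu_2}$ uniformly, with the $\sqrt{|\lambda|+1}$ gain. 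The decay \eqref{decayV} means $V=\langle x\rangle^{-\nu_2}\cdot(\langle x\rangle^{\nu_2}V\langle x\rangle^{\nu_1})\cdot\langle x\rangle^{-\nu_1}$ with the middle factor bounded and in fact decaying (since $\alpha>\nu_1+\nu_2$ can be arranged, using $\nu_1,\nu_2$ close to $1/2$ so $\nu_1+\nu_2$ slightly above $2<\alpha$... one needs $\nu_1+\nu_2\le\alpha$; if $\alpha$ is not that large one still has $\alpha>2$ and can take $\nu_1+\nu_2$ close to $2$), which is what makes the perturbation argument and the compactness arguments work.

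For large $\re\lambda$ the argument is purely perturbative. Starting from the second resolvent identity $R_V=R_0-R_0VR_V$, sandwich with weights: $\langle x\rangle^{-\nu_1}R_V\langle x\rangle^{-\nu_2}=\langle x\rangle^{-\nu_1}R_0\langle x\rangle^{-\nu_2}-\langle x\rangle^{-\nu_1}R_0\langle x\rangle^{-\nu_2}\cdot(\langle x\rangle^{\nu_2}V\langle x\rangle^{\nu_1})\cdot\langle x\rangle^{-\nu_1}R_V\langle x\rangle^{-\nu_2}$, so that with $A(\lambda):=\langle x\rangle^{-\nu_1}R_0(\lambda)\langle x\rangle^{-\nu_2}$ and $B:=\langle x\rangle^{\nu_2}V\langle x\rangle^{\nu_1}$ (bounded on $L^2$, with small norm localized at infinity since $V$ decays), one gets $(I+A(\lambda)B)\langle x\rangle^{-\nu_1}R_V\langle x\rangle^{-\nu_2}=A(\lambda)$. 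The free weighted resolvent estimate (the $V=0$ case, which by hypothesis Assumption \ref{As:K} and the classical theory --- see Proposition \ref{pp:VzeroND} referenced in the paper --- gives $\|A(\lambda)\|\lesssim(|\lambda|+1)^{-1/2}$) shows $\|A(\lambda)B\|<1/2$ once $|\lambda|$ is large enough, so $I+A(\lambda)B$ is invertible by Neumann series and $\|\langle x\rangle^{-\nu_1}R_V\langle x\rangle^{-\nu_2}\|\le 2\|A(\lambda)\|\lesssim(|\lambda|+1)^{-1/2}$. This disposes of $\re\lambda\ge\Lambda_0$ for some large $\Lambda_0$.

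For the intermediate regime $-a<\re\lambda<\Lambda_0$, $\lambda$ away from $0$ (say $|\lambda|\ge\rho_0>0$), one argues by contradiction and compactness. On this region $\sqrt{|\lambda|+1}$ is bounded above and below, so it suffices to prove a uniform bound on $\|\langle x\rangle^{-\nu_1}R_V(\lambda)\langle x\rangle^{-\nu_2}\|$ itself. The operator $A(\lambda)B$ extends continuously (by the free limiting absorption principle, including up to the real axis for $\re\lambda>0$) to the closed region, and is compact on $L^2$ because $B$ is bounded with $B=\langle x\rangle^{\nu_2}V\langle x\rangle^{\nu_1}$ decaying at infinity (so $\langle x\rangle^{-\delta}$-gain makes $A(\lambda)\langle x\rangle^{-\nu_2}V\langle x\rangle^{\nu_1}$ compact via Rellich). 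By the analytic Fredholm alternative, $I+A(\lambda)B$ is invertible except at a discrete set of $\lambda$; the values of $\lambda$ where it fails to be invertible correspond exactly to (i) eigenvalues of $L_V$ --- excluded in $\re\lambda>-a$ by the definition $-2a=\sup\sigma_{\rm pp}$, and for $\re\lambda>0$ by the absence of embedded positive eigenvalues from \cite{IoJe03} --- or (ii) resonances, which on $(0,\infty)$ do not occur either (standard: a solution of $L_Vv=\lambda v$, $\lambda>0$, with $\langle x\rangle^{-\nu}v\in L^2$ must vanish by a Rellich uniqueness / unique continuation argument, again using \cite{IoJe03} and the non-trapping structure). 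Hence $I+A(\lambda)B$ is invertible at every point of the compact region $\{-a<\re\lambda\le\Lambda_0,\ |\lambda|\ge\rho_0\}$ up to the boundary, its inverse is continuous there, and so $\|(I+A(\lambda)B)^{-1}\|$ is uniformly bounded, giving the desired uniform bound on the weighted resolvent.

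Finally, the low-frequency regime $|\lambda|\le\rho_0$, $\re\lambda>-a$ (so $\lambda$ near $0$) is where Assumption \ref{As:zero} is used and where the main difficulty lies. The free weighted resolvent $R_0(\lambda)$ in dimension $N\ge 3$ has a limit $R_0(0)$ as $\lambda\to 0$ bounded between suitable weighted spaces (this is where $\nu_1,\nu_2>1/2$ and $\nu_1+\nu_2>2$ enter, via the Stein–Weiss / Hardy–Littlewood–Sobolev estimates for the kernel $|x-y|^{2-N}$ alluded to in Remark \ref{R:resonance} and Proposition \ref{P:decay_resonance}), so $A(\lambda)\to A(0)$ and $A(\lambda)B\to A(0)B$ in operator norm, still compact. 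Thus $I+A(\lambda)B$ is invertible uniformly near $\lambda=0$ provided $I+A(0)B$ is invertible, i.e. provided $-1$ is not an eigenvalue of the compact operator $A(0)B$. But $(I+A(0)B)g=0$ with $g=\langle x\rangle^{\nu_1}v$ translates precisely into $v$ being a zero-energy state of $L_V$: $v=-R_0(0)Vv$ solves $L_Vv=0$ in $\Omega$ with Dirichlet boundary condition, and the mapping properties of $R_0(0)$ force $\langle x\rangle^{-1}v\in L^2(\Omega)$ (the elliptic regularity $v\in H^2_{\rm loc}$ being automatic). Assumption \ref{As:zero} says no such $v$ exists, so $I+A(0)B$ is invertible, and one concludes exactly as in the intermediate regime, picking up a uniform bound and hence (since $\sqrt{|\lambda|+1}$ is bounded near $0$) the estimate \eqref{eq:alpND} there. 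The main obstacle is this last step: one must carefully set up the weighted spaces so that $R_0(0)$ is bounded between them with the compactness needed, reconcile the various possible definitions of zero-energy state (handled by Proposition \ref{P:decay_resonance}), and check that the characterization of the kernel of $I+A(0)B$ as the space of zero-energy states holds with exactly the weight $\langle x\rangle^{-1}$ appearing in Definition \ref{def:resonance}; the uniformity up to $\lambda=0$ of $(I+A(\lambda)B)^{-1}$ then follows from continuity of $\lambda\mapsto A(\lambda)B$ in operator norm on the compact closed region together with invertibility at every point.
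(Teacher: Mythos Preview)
Your overall architecture matches the paper's: perturbation for large $|\lambda|$ via the second resolvent identity and the free estimate (Proposition~\ref{pp:VzeroND}), and a Fredholm/compactness argument for bounded $\lambda$ built on the factorization through $I+A(\lambda)B$ (the paper's $T_V(\lambda)$). The large-$\lambda$ part is essentially identical to the paper's \S\ref{sub:large}.

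There is, however, a genuine gap in your treatment of the intermediate regime $\lambda\in(0,\infty)$. You assert that ``a solution of $L_Vv=\lambda v$, $\lambda>0$, with $\langle x\rangle^{-\nu}v\in L^2$ must vanish by a Rellich uniqueness / unique continuation argument, again using \cite{IoJe03}''. As stated this is false: already for $V\equiv0$, Herglotz waves $v(x)=\int_{S^{N-1}}e^{i\sqrt{\lambda}\,x\cdot\omega}g(\omega)\,d\omega$ solve $(-\Delta-\lambda)v=0$ and satisfy $\langle x\rangle^{-\nu}v\in L^2$ for every $\nu>1/2$ without being zero. What a kernel element of $I+A(\lambda+i0)B$ actually gives you is an \emph{outgoing} solution $v=R_0(\lambda+i0)(-Vv)$, and the result of Ionescu--Jerison \cite{IoJe03} only excludes genuine $L^2$ eigenfunctions. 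Bridging this gap --- upgrading an outgoing solution in a weighted $L^2$ space to an honest $L^2$ function --- is precisely what the paper carries out in Case~2 of \S\ref{sub:end}: one first shows (Step~2 there) that $v$ is \emph{also} incoming, via the reality computation $\im\int \chi v\,\overline f=0$, and then invokes H\"ormander's bootstrapping (Theorem~14.3.7 of \cite{Hormander.T2}, Step~3) to obtain $\langle x\rangle^{1/2}v\in L^2$, at which point \cite{IoJe03} applies. This Agmon-type argument is the substantive core of the proof and cannot be waved away as standard.

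A secondary point: your direct approach (invertibility at each closed-region point $+$ continuity $\Rightarrow$ uniform bound) requires operator-norm continuity of $\lambda\mapsto A(\lambda)B$ up to the real axis and through $\lambda=0$, which is true but needs its own justification in the exterior-domain setting. The paper sidesteps this by running a contradiction argument instead (\S\ref{sub:elliptic}): assuming $\|T_V(\lambda_n)^{-1}\|\to\infty$, extracting a weak limit $v$, and deriving the contradiction case by case. This requires only uniform bounds on the free resolvent, not continuity.
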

We split the long proof of \eqref{eq:alpND} into three subsections for convenience of reading.
\subsection{The uniform limiting absorption principle when $V\equiv 0$}
The proof of Proposition \ref{pp:alpND} relies on the analogous result for the Laplacian without potential, which we prove in this subsection:
\begin{proposition}
	\label{pp:VzeroND}
	Let $N\ge 3$ and assume that $K$ satisfies Assumption \ref{As:K}. Then for $s_1,s_2>1/2$ with $s_1+s_2>2$, we have
	\begin{equation}
	\label{eq:V=0 ND}
	\sup_{\im \lambda \neq 0}
	\sqrt{|\lambda|+1}\;\|\langle x\rangle^{-s_1}(-\Delta_D-\lambda)^{-1}\langle x\rangle^{-s_2}\|_{L^2(\Omega)\to L^2(\Omega)}<+\infty.
	\end{equation}
\end{proposition}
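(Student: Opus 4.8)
The plan is to derive the weighted resolvent bound \eqref{eq:V=0 ND} from the non-trapping hypothesis \eqref{non-trapping}, which already gives control by a compactly supported cutoff $\chi$, by splitting $\R^N\setminus K$ into a bounded region where Assumption \ref{As:K} applies directly, and an exterior region where we can use the explicit/free resolvent on all of $\R^N$. Concretely, fix $R$ with $K\subset\{|x|<R\}$ and pick $\chi\in C_0^\infty(\R^N)$ with $\chi\equiv 1$ on $\{|x|\le 2R\}$; write the cutoff $\langle x\rangle^{-s_1}=\chi\langle x\rangle^{-s_1}+(1-\chi)\langle x\rangle^{-s_1}$ and similarly on the right. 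The term with $\chi$ on the left and anything bounded on the right is handled by \eqref{non-trapping}, since $\chi\langle x\rangle^{-s_1}$ is a bounded multiple of $\chi$ and $(-\Delta_D-\lambda)^{-1}\langle x\rangle^{-s_2}=(-\Delta_D-\lambda)^{-1}\chi\langle x\rangle^{-s_2}+(-\Delta_D-\lambda)^{-1}(1-\chi)\langle x\rangle^{-s_2}$; the first piece is immediately controlled and the second requires estimating $\chi(-\Delta_D-\lambda)^{-1}(1-\chi)$, where the cutoffs have disjoint supports.

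The main work is the region away from the obstacle, i.e. estimating $(1-\chi_1)(-\Delta_D-\lambda)^{-1}(1-\chi)$ (with $\chi_1$ a slightly larger cutoff, $\chi_1\equiv1$ on $\supp\chi$), and the ``off-diagonal'' terms $\chi(-\Delta_D-\lambda)^{-1}(1-\chi)$. For this I would run a standard parametrix/gluing argument: let $R_0(\lambda)=(-\Delta-\lambda)^{-1}$ on $\R^N$, for which the weighted estimate $\sqrt{|\lambda|+1}\,\|\langle x\rangle^{-s_1}R_0(\lambda)\langle x\rangle^{-s_2}\|_{L^2\to L^2}\lesssim 1$ is the classical limiting absorption principle of Agmon--Kato (valid for $s_1,s_2>1/2$; the condition $s_1+s_2>2$ is the one needed near $\lambda=0$ in low dimensions, cf.\ the Stein--Weiss estimate invoked in Remark \ref{R:resonance}). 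Set $\psi\in C_0^\infty$ with $\psi\equiv1$ near $\supp\chi$ and $\supp\psi\subset\{|x|<2R\}$, so $1-\psi$ vanishes near $K$, and define the approximate resolvent $\widetilde R(\lambda)=\psi\,(-\Delta_D-\lambda)^{-1}\psi + (1-\psi)R_0(\lambda)(1-\widetilde\psi)$ for suitable nested cutoffs; then $(-\Delta_D-\lambda)\widetilde R(\lambda)=\mathrm{Id}+E(\lambda)$ where the error $E(\lambda)$ is a sum of terms of the form $[\Delta,\psi](-\Delta_D-\lambda)^{-1}\psi$ and $[\Delta,\psi]R_0(\lambda)(1-\widetilde\psi)$, each of which is compactly supported in $x$ on both sides, hence controlled (with an extra factor, e.g.\ $\|E(\lambda)\|\lesssim (1+|\lambda|)^{-1/2}$ or at worst $O(1)$ with a smallness/Neumann-series input) by \eqref{non-trapping} together with the free estimate; inverting $\mathrm{Id}+E(\lambda)$ — after a separate argument that $\mathrm{Id}+E(\lambda)$ is invertible uniformly, which follows because $-\Delta_D$ has no eigenvalues in $(0,\infty)$ and is nonnegative, so the resolvent exists for all non-real $\lambda$ and the weighted bounds are a priori finite for each fixed $\lambda$, leaving only uniformity — yields $(-\Delta_D-\lambda)^{-1}=\widetilde R(\lambda)(\mathrm{Id}+E(\lambda))^{-1}$ and the claimed bound after sandwiching with $\langle x\rangle^{-s_1},\langle x\rangle^{-s_2}$.

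The hard part, and the point deserving the most care, is the uniform invertibility of $\mathrm{Id}+E(\lambda)$ as $\lambda$ ranges over $\{\im\lambda\ne 0\}$ all the way down to $\lambda=0$ and out to $\lambda=\infty$: for large $|\lambda|$ one gets smallness of $E(\lambda)$ from the $\sqrt{|\lambda|+1}$ gain, so a Neumann series closes; for $\lambda$ in a compact subset of $\{\re\lambda>0\}\cup\{\re\lambda<0\}$ away from $0$ one uses continuity of $\lambda\mapsto E(\lambda)$ in operator norm on the relevant weighted spaces together with the fact that the limiting values on the real axis are still invertible (no embedded eigenvalues, and the free limiting absorption principle); and the genuinely delicate regime is $\lambda\to 0$, where one must show $\mathrm{Id}+E(0)$ is invertible — here the absence of a zero eigenvalue or zero resonance for $-\Delta_D$ (which holds automatically for $-\Delta_D$ outside a nonempty obstacle with Dirichlet conditions, by the maximum principle / the fact that a Dirichlet harmonic function decaying at infinity vanishes) is exactly what is needed, and the role of $s_1+s_2>2$ is to make the compactness argument work in the weighted spaces for $N=3,4$. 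I would organize the write-up so that the $\lambda\to0$ analysis is isolated as the one substantive lemma, reducing it to: (i) the free resolvent estimate near zero, (ii) disjoint-support decay of $\chi R_0(\lambda)(1-\widetilde\psi)$, and (iii) triviality of the Dirichlet zero-energy space.
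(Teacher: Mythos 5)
Your parametrix-gluing strategy is a genuinely different route from the paper's, and as written it has a real gap in the inversion step. You set up $(-\Delta_D-\lambda)\widetilde R(\lambda)=\mathrm{Id}+E(\lambda)$ and propose to write $(-\Delta_D-\lambda)^{-1}=\widetilde R(\lambda)(\mathrm{Id}+E(\lambda))^{-1}$; but $E(\lambda)$ consists of commutator terms like $[\Delta,\psi](-\Delta_D-\lambda)^{-1}\psi$ and $[\Delta,\psi]R_0(\lambda)(1-\widetilde\psi)$ whose operator norms are only \emph{bounded}, not small, once you feed in Assumption~\ref{As:K} and the free resolvent estimate. The Neumann series therefore closes only for $|\lambda|$ large (where the $\sqrt{1+|\lambda|}$ gain gives genuine smallness). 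For $\lambda$ bounded you then propose to re-derive uniform invertibility of $\mathrm{Id}+E(\lambda)$ from scratch — via a Fredholm/compactness argument, absence of embedded eigenvalues, and absence of a zero resonance for $-\Delta_D$. This is exactly the content of a uniform limiting absorption principle for the obstacle Laplacian, which is what Assumption~\ref{As:K} already hands you in cutoff form. Re-proving it is both circular in spirit (you are reconstructing the hypothesis) and, as sketched, incomplete: you have not set up the weighted-space Fredholm framework, and the continuity-plus-invertibility-on-the-real-axis argument is a substantial piece of work rather than a remark.

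The paper avoids the inversion entirely by a direct two-pass bootstrap. Start with $u=(-\Delta_D-\lambda)^{-1}\chi f$, take $\psi$ vanishing near $K$ and equal to $1$ at infinity, and note that $\psi u$ solves $(\Delta_{\R^N}+\lambda)(\psi u)=g$ on all of $\R^N$ with $g=\psi\chi f+(\Delta\psi)u+2\nabla\psi\cdot\nabla u$ \emph{compactly supported}. Since $\im\lambda\neq0$, $\psi u=(\Delta_{\R^N}+\lambda)^{-1}g$ with the $L^2$ resolvent, and $\|\langle x\rangle^{2}g\|_{L^2}\lesssim\|f\|_{L^2}$ by the $H^1$ upgrade of Assumption~\ref{As:K}. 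The free weighted resolvent estimate (Proposition~\ref{P:BB}, with one exponent $>1/2$ and the other $=2$, so the sum exceeds $2$) then gives $\sqrt{1+|\lambda|}\|\langle x\rangle^{-s}\psi u\|_{L^2}\lesssim\|f\|_{L^2}$; combined with the near-obstacle bound this is the one-sided estimate $\langle x\rangle^{-s}(-\Delta_D-\lambda)^{-1}\chi$. Dualizing gives $\chi(-\Delta_D-\lambda)^{-1}\langle x\rangle^{-s}$, and running the same computation once more with this improved input produces the two-sided bound, with $s_1+s_2>2$ entering through the hypotheses of Proposition~\ref{P:BB}. No parametrix inversion, no spectral analysis of $-\Delta_D$ near $\lambda=0$ is needed; Assumption~\ref{As:K} has already absorbed that difficulty. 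If you wish to keep a parametrix framework, the fix is to \emph{not} invert: rearrange to $(-\Delta_D-\lambda)^{-1}=\widetilde R(\lambda)-(-\Delta_D-\lambda)^{-1}E(\lambda)$, observe that $E(\lambda)$ has compactly supported range, and bootstrap on the one-sided weight exactly as the paper does — at which point you have essentially reproduced its argument.
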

Before going to the proof of this proposition, let us recall a classical result for the free resolvent estimates on the whole space $\mathbb{R}^N$.
\begin{proposition}
	\label{P:BB}
	Let $s>1/2$,  $N\ge 3$ and denote $-\Delta_{\mathbb{R}^N}$ the flat Laplacian on  $\mathbb{R}^N$. Then, we have
\begin{equation}
\label{eq:freeResolvent1}
	\sup_{\im \lambda \neq 0}
\sqrt{|\lambda|}\; \left\|\langle x\rangle^{-s}(-\Delta_{\mathbb{R}^N}-\lambda)^{-1}\langle x\rangle ^{-s}\right\|_{L^2(\mathbb{R}^N)\to L^2(\mathbb{R}^N)}<+\infty.
\end{equation}
Furthermore, if $s_1,s_2>1/2$ and $s_1+s_2>2$,
\begin{equation}
\label{eq:freeResolvent}
	\sup_{\im \lambda \neq 0}
\sqrt{1+|\lambda|}\; \left\|\langle x\rangle^{-s_1}(-\Delta_{\mathbb{R}^N}-\lambda)^{-1}\langle x\rangle ^{-s_2}\right\|_{L^2(\mathbb{R}^N)\to L^2(\mathbb{R}^N)}<+\infty.
\end{equation}
\end{proposition}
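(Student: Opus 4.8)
\emph{Proof strategy for Proposition~\ref{P:BB}.} This is the classical Agmon--Kato--Kuroda limiting absorption principle for $-\Delta$ on $\mathbb{R}^N$, in its sharp form (with the optimal decay in $\lambda$ at high frequency); I would cite it (Agmon; Kato; Kuroda; and the standard references on uniform resolvent estimates) and, for completeness, sketch the argument as follows. The plan has three stages: (i) reduce the weighted resolvent to the derivative of the spectral measure of $-\Delta_{\mathbb{R}^N}$; (ii) estimate that density by the Sobolev trace lemma on spheres --- this is exactly where the hypothesis $s>1/2$ (resp.\ $s_j>1/2$) enters; (iii) recover the resolvent by a Privalov/Plemelj argument and glue the regimes $|\lambda|\gg1$, $|\lambda|\sim1$ and $|\lambda|\ll1$. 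The additional condition $s_1+s_2>2$ in \eqref{eq:freeResolvent} is needed only in the last regime, where the limiting operator is the Riesz potential $(-\Delta_{\mathbb{R}^N})^{-1}$ and $L^2$-boundedness with the weights $\langle x\rangle^{-s_1},\langle x\rangle^{-s_2}$ is precisely the content of the Stein--Weiss inequality \cite{SW58}.

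In more detail, for $h\in C_0^\infty(\mathbb{R}^N)$ the Fourier transform gives $\frac{d}{d\mu}\langle E(\mu)h,h\rangle=c_N\,\mu^{\frac N2-1}\int_{\mathbb{S}^{N-1}}|\widehat h(\sqrt\mu\,\omega)|^2\,d\omega$ for $\mu>0$, where $E$ is the spectral family of $-\Delta_{\mathbb{R}^N}$, and $\langle\langle x\rangle^{-s_1}(-\Delta_{\mathbb{R}^N}-\lambda)^{-1}\langle x\rangle^{-s_2}f,g\rangle$ is obtained from the density $\frac{d}{d\mu}\langle E(\mu)\langle x\rangle^{-s_2}f,\langle x\rangle^{-s_1}g\rangle$ by integration against $(\mu-\lambda)^{-1}$, the boundary values $\lambda\to\mu_0\pm i0$ being recovered once this density is H\"older continuous. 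The trace lemma
\[
\Big(\int_{\mathbb{S}^{N-1}}|\widehat h(\omega)|^2\,d\omega\Big)^{1/2}\lesssim\|\langle x\rangle^{s}h\|_{L^2(\mathbb{R}^N)}\qquad(s>1/2),
\]
applied on the sphere of radius $\sqrt\mu$ after dilation, simultaneously controls the \emph{size} of the density (with the homogeneity that produces exactly the weight $\sqrt{|\lambda|}$ in \eqref{eq:freeResolvent1}, uniformly down to $\lambda=0$) and gives a uniform H\"older modulus of continuity in $\mu$; these two facts feed the Privalov argument and yield \eqref{eq:freeResolvent1}. For \eqref{eq:freeResolvent}: on $\{|\lambda|\ge1\}$ it follows from \eqref{eq:freeResolvent1}, after absorbing the bounded factors $\langle x\rangle^{-(s_j-s)}$, $s:=\min(s_1,s_2)>1/2$, into multiplication operators; on $\{0<|\lambda|\le1\}$ one uses that $\lambda\mapsto\langle x\rangle^{-s_1}(-\Delta_{\mathbb{R}^N}-\lambda)^{-1}\langle x\rangle^{-s_2}$ extends continuously, in operator norm, down to $\lambda=0$ with limit $\langle x\rangle^{-s_1}(-\Delta_{\mathbb{R}^N})^{-1}\langle x\rangle^{-s_2}$, and that the latter is bounded on $L^2$ by the Stein--Weiss inequality \cite{SW58} under the hypotheses $s_1,s_2>1/2$, $s_1+s_2>2$ (the diagonal singularity $|x-y|^{-(N-2)}$ of its kernel being locally integrable since $N\ge3$); since $\sqrt{1+|\lambda|}\le\sqrt2$ here, this closes the estimate.

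I expect the main obstacle to be the uniformity of the constant over the whole half-plane $\{\im\lambda\ne0\}$ together with the sharp weight $\sqrt{|\lambda|}$: the $|\lambda|^{-1/2}$ decay is strictly stronger than what Lebesgue-space (Kenig--Ruiz--Sogge) uniform Sobolev estimates provide, so one genuinely has to run the spectral-density/trace-lemma computation and, crucially, control the \emph{modulus of continuity} of the density, not merely its size; a secondary technical point is to check that the passage to the limit $\lambda\to0$ in the weighted operator norm --- and hence the reduction to the Stein--Weiss bound --- is uniform, which is where the threshold $s_1+s_2>2$ is sharp (it reflects the genuine low-energy behaviour of $(-\Delta_{\mathbb{R}^N})^{-1}$ in dimensions $N=3,4$).
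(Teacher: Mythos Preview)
Your outline is correct, but the paper takes a shorter, more citation-based route. For \eqref{eq:freeResolvent1}, instead of running the Agmon--Kato--Kuroda machinery (spectral density, trace lemma, Privalov), the paper simply invokes Theorem~1(b) of Barcel\'o--Ruiz--Vega \cite{BaRuVe97}, which gives the $\sqrt{|\lambda|}$-weighted resolvent bound directly for any weight $w$ satisfying an explicit integral condition, and checks that $w(x)=\langle x\rangle^{-2s}$ qualifies when $s>1/2$. For \eqref{eq:freeResolvent}, the paper observes that, in view of \eqref{eq:freeResolvent1}, it suffices to prove the \emph{unweighted} uniform bound $\sup_{\im\lambda\neq0}\|\langle x\rangle^{-s_1}(-\Delta_{\R^N}-\lambda)^{-1}\langle x\rangle^{-s_2}\|_{L^2\to L^2}<\infty$, and obtains this from the Kenig--Ruiz--Sogge uniform Sobolev inequality $\|(-\Delta_{\R^N}-\lambda)^{-1}\|_{L^{r_2}\to L^{r_1}}\lesssim 1$ (valid for $\frac{1}{r_2}-\frac{1}{r_1}=\frac{2}{N}$ in a suitable range) together with H\"older to pass between weighted $L^2$ and the Lebesgue spaces $L^{r_1},L^{r_2}$. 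Your Stein--Weiss argument at $\lambda=0$ is precisely the $\lambda=0$ specialisation of this KRS$+$H\"older step (Hardy--Littlewood--Sobolev for the Riesz potential); the paper's route therefore avoids the continuity-to-$\lambda=0$ passage you flagged as a technical point, since KRS already delivers uniformity in $\lambda$ across the whole half-plane. Your approach is more self-contained and makes the spectral mechanism transparent; the paper's is shorter and sidesteps the low-frequency limit entirely.
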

\begin{proof}
	According to \cite[(b) in Theorem 1]{BaRuVe97}, if $w$ satisfies $|||w|||<+\infty$ where
	\begin{equation}
	\label{eq:V-w-c}
	|||w|||=\sup_{\mu\ge 0}\int_\mu^\infty\Bigl\{\sup_{x:\,|x|=r } w(x)\Bigr\}\frac{r}{\sqrt{r^2-\mu^2}}dr,
	\end{equation}
	then
	$$ \sup_{\im \lambda \neq 0} \sqrt{|\lambda|}\left\|\sqrt{w} (\Delta_{\R^N}+\lambda)^{-1}\sqrt{w}\right\|_{L^2(\R^N)\to L^2(\R^N)}<\infty.$$
	Thus, it suffices to verify that $w(x)=\langle x\rangle^{-2s}$ fulfills the condition \eqref{eq:V-w-c} for any $s>1/2$, which we leave to the reader.
	
	In view of \eqref{eq:freeResolvent1}, \eqref{eq:freeResolvent} will follow from 
\begin{equation}
\label{eq:freeResolvent2}
	\sup_{\im \lambda \neq 0}
 \left\|\langle x\rangle^{-s_1}(\Delta_{\mathbb{R}^N}+\lambda)^{-1}\langle x\rangle ^{-s_2}\right\|_{L^2(\mathbb{R}^N)\to L^2(\mathbb{R}^N)}<\infty,
\end{equation} 
which is also well-known. Indeed one can see it as a consequence of the uniform Sobolev estimate (see \cite[Lemma 2.2, b)]{KenigRuizSogge87})
\begin{equation}
\label{KRS87}
	\sup_{\im \lambda \neq 0}
 \left\|(\Delta_{\mathbb{R}^N}+\lambda)^{-1}\right\|_{L^{r_2}(\mathbb{R}^N)\to L^{r_1}(\mathbb{R}^N)}<\infty,
\end{equation}
where $\frac{1}{r_2}-\frac{1}{r_1}=\frac{2}{N}$, $\frac{1}{2}-\frac{1}{r_1}>\frac{1}{2N}$, $\frac{1}{r_2}-\frac{1}{2}>\frac{1}{2N}$. We choose an exponent $r_1$ such that 
\begin{equation}
 \label{defr1}
 \max\left( \frac{N}{2}-s_1,\frac{N-3}{2} \right)<\frac{N}{r_1}<\min\left( \frac{N-1}{2},s_2+\frac{N-4}{2} \right)
\end{equation} 
(which is always possible since the right-hand side of \eqref{defr1} is larger than the left-hand side by the assumptions on $s_1$ and $s_2$), and let $r_2$ such that $\frac{1}{r_2}=\frac{1}{r_1}+\frac{2}{N}$. Using H\"older inequality, we obtain, with the above conditions 
$$\|u\|_{L^{r_2}}\lesssim \left\|\langle x \rangle^{s_2}u\right\|_{L^2}, \quad \left\|\langle x\rangle^{-s_1}u\right\|_{L^2}\lesssim \|u\|_{L^{r_1}},$$ 
which, together with \eqref{KRS87}, yields
\eqref{eq:freeResolvent2}.
The proof is complete.
\end{proof}
As a consequence, we have
\begin{corollary}
	\label{crl:free_resolvent}
	Let $N\ge 3$ and $s_1,s_2>1/2$ with $s_1+s_2>2$. Then 
	$$
	\sup_{\im \lambda\neq 0} \left\| \langle x\rangle ^{-s_1} \left( \lambda+\Delta_{\R^N}\right)^{-1}\langle x\rangle^{-s_2} \right\|_{L^2(\R^N)\to H^1(\R^N)}<\infty.$$
\end{corollary}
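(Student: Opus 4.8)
The plan is to upgrade the $L^2 \to L^2$ bound of Proposition \ref{P:BB} (in the form \eqref{eq:freeResolvent}) to an $L^2 \to H^1$ bound by controlling the gradient of $u = (\lambda + \Delta_{\R^N})^{-1}\langle x\rangle^{-s_2} g$. The point is that $u$ solves $(\lambda + \Delta_{\R^N}) u = \langle x\rangle^{-s_2} g$, i.e. $-\Delta_{\R^N} u = \lambda u - \langle x\rangle^{-s_2} g$, so one can trade a factor of $\Delta$ against a factor of $\lambda$ plus the source. The $H^1$ norm splits into the $L^2$ norm of $\langle x\rangle^{-s_1} u$ and the $L^2$ norm of $\langle x\rangle^{-s_1}\nabla u$, and only the latter requires work.

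First I would handle the high-frequency regime $|\lambda| \geq 1$. For the gradient I would use the identity, valid on a weighted space, $\|\langle x\rangle^{-s_1}\nabla u\|_{L^2}^2 = -\int \langle x\rangle^{-2s_1} \bar u \,\Delta u - \int (\nabla \langle x\rangle^{-2s_1})\cdot \nabla u\, \bar u$ (obtained by integrating by parts; the weight is bounded with bounded derivatives, so no boundary terms and the last integral is absorbed by a small multiple of $\|\langle x\rangle^{-s_1}\nabla u\|_{L^2}^2$ via Cauchy–Schwarz, using that $|\nabla\langle x\rangle^{-2s_1}| \lesssim \langle x\rangle^{-2s_1}$). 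Substituting $-\Delta u = \lambda u - \langle x\rangle^{-s_2}g$ gives $\|\langle x\rangle^{-s_1}\nabla u\|_{L^2}^2 \lesssim |\lambda|\,\|\langle x\rangle^{-s_1}u\|_{L^2}\|\langle x\rangle^{-s_1}u\|_{L^2} + \|\langle x\rangle^{-s_1}u\|_{L^2}\|\langle x\rangle^{s_1-s_2}g\|_{L^2} + (\text{absorbed term})$. Here one needs $s_1 \leq s_2$ is \emph{not} assumed, so instead I would be slightly more careful: bound $|\int \langle x\rangle^{-2s_1}\bar u \langle x\rangle^{-s_2} g| \leq \|\langle x\rangle^{-s_1} u\|_{L^2}\|\langle x\rangle^{-s_1-s_2}g\|_{L^2} \leq \|\langle x\rangle^{-s_1}u\|_{L^2}\|\langle x\rangle^{-s_2}g\|_{L^2}$ since $s_1 > 0$. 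Then \eqref{eq:freeResolvent} bounds $\|\langle x\rangle^{-s_1}u\|_{L^2} \lesssim |\lambda|^{-1/2}\|\langle x\rangle^{-s_2}g\|_{L^2}$, and the first term becomes $|\lambda| \cdot |\lambda|^{-1}\|\langle x\rangle^{-s_2}g\|_{L^2}^2 = \|\langle x\rangle^{-s_2}g\|_{L^2}^2$, giving the desired uniform bound.

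For the low-frequency regime $|\lambda| \leq 1$, the factor $\sqrt{1+|\lambda|}$ is harmless, so I only need a uniform (in $\lambda$ with $\im\lambda\neq 0$, $|\lambda|\leq 1$) bound on $\|\langle x\rangle^{-s_1}\nabla u\|_{L^2}$. I would repeat the integration-by-parts identity; now the term $|\lambda|\,\|\langle x\rangle^{-s_1}u\|_{L^2}^2 \leq \|\langle x\rangle^{-s_1}u\|_{L^2}^2$ is controlled directly by \eqref{eq:freeResolvent} (which gives a uniform bound on $\|\langle x\rangle^{-s_1}u\|_{L^2}$ since $\sqrt{1+|\lambda|}\geq 1$), and the source term is handled as above. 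So both regimes close, and taking the supremum over $\im\lambda\neq 0$ yields the corollary.

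The main obstacle — really the only subtlety — is the integration by parts on the whole space with the weight: one must justify that $u$, a priori only in $\langle x\rangle^{s_1} L^2$ with $\Delta u \in \langle x\rangle^{s_2-2s_1}L^2 + \langle x\rangle^{s_1}L^2 \subset L^2_{\loc}$, has enough regularity and decay for the boundary terms at infinity to vanish and for $\langle x\rangle^{-s_1}\nabla u \in L^2$ a priori; this is standard elliptic regularity plus a cutoff/limiting argument (multiply the weight by $\chi(x/R)$, integrate by parts cleanly, let $R\to\infty$, using that the cutoff commutator terms carry an extra $R^{-1}$ and are controlled by the already-bounded quantities). I would state this as a routine density/approximation step rather than belabor it. An alternative, perhaps cleaner, route is to invoke the uniform Sobolev estimate \eqref{KRS87} directly at the level of $\nabla(\lambda+\Delta)^{-1}$: since $\nabla(\lambda+\Delta_{\R^N})^{-1}$ maps $L^{r_2}\to L^{r_1}$-type spaces with the right gain when $|\lambda|$ is bounded, and for large $|\lambda|$ one uses $\nabla(\lambda+\Delta)^{-1} = \nabla(-\Delta)^{-1} \cdot (-\Delta)(\lambda+\Delta)^{-1}$ together with the $|\lambda|^{-1/2}$ decay — but the integration-by-parts argument is more self-contained and I would present that.
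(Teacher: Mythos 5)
Your proposal is correct and is exactly the approach the paper indicates: the paper's proof of Corollary \ref{crl:free_resolvent} is the one-line remark that the bound ``follows easily, using the equation satisfied by $(\lambda+\Delta_{\R^N})^{-1}\langle x\rangle^{-s_2}f$ and an integration by parts,'' and you have simply carried out this computation. The high/low frequency split is unnecessary (the cancellation $|\lambda|\cdot(1+|\lambda|)^{-1}\lesssim 1$ handles all $\lambda$ uniformly at once), but this does not affect correctness.
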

\begin{proof}
	 This bound follows easily, using the equation satisfied by $(\lambda+\Delta_{\R^N})^{-1}\langle x\rangle^{-s_2}f$ and an integration by parts.
\end{proof}
Based on Proposition \ref{P:BB}, we are ready for  the proof of Proposition \ref{pp:VzeroND}.
\begin{proof}[Proof of Proposition \ref{pp:VzeroND}]
	We recall that we have assumed the following resolvent estimate:
	\begin{equation}
	\label{eq:cutoff}
	\sup_{\im \lambda\neq 0}
	\sqrt{1+\lvert \lambda\rvert}\,
	\bigl\lVert \chi(\Delta_D+\lambda)^{-1}\chi\bigr\rVert_{L^2(\Omega)\to L^2(\Omega)}<\infty,
	\end{equation} 
	where $\Delta_D$ is the Dirichlet Laplace operator on $\Omega$ and $\chi\in C_0^{\infty}(\R^N)$. 
	Note that \eqref{eq:cutoff} implies
	\begin{equation}
	\label{eq:cutoff2}
	\sup_{\im \lambda\neq 0}
	\bigl\lVert \chi(\Delta_D+\lambda)^{-1}\chi\bigr\rVert_{L^2(\Omega)\to H^1_{0}(\Omega)}<\infty,
	\end{equation} 
	We first prove that for any $s>1/2$, we can replace one of the cut-off functions $\chi$ by $\langle x\rangle^{-s}$, i.e.
	\begin{equation}
	\label{eq:half_cutoff}
	\sup_{\im \lambda\neq 0}
	\sqrt{1+\lvert \lambda\rvert}\,
	\bigl\lVert \langle x\rangle^{-s}(\Delta_D+\lambda)^{-1}\chi\bigr\rVert_{L^2(\Omega)\to L^2(\Omega)}<\infty,\quad s>\frac 12.
	\end{equation} 
	Indeed, let $f\in L^2$, $u=(\Delta_D+\lambda)^{-1}\chi f$. Let $\psi\in C_0^{\infty}(\R^N)$ such that $\psi=0$ close to the obstacle $K$ and $\psi(x)=1$ if $|x|$ is large. Then 
	\begin{equation*}
	(\Delta+\lambda)(\psi u)=\psi\chi f+(\Delta \psi) u +2\nabla \psi\cdot\nabla u. 
	\end{equation*} 
	Thus, using that $\psi=0$ close to $K$,
	\begin{equation}
	 \label{eq:psiu}
\psi u=\left( \Delta_{\R^N}+\lambda\right)^{-1}g\text{ where }g=\psi\chi f+(\Delta \psi) u+2\nabla \psi\cdot\nabla u.
	 \end{equation} 
	Let $\tilde{\chi}\in C_0^{\infty}(\R^N)$ such that $\tilde{\chi}=1$ on the support of $1-\psi$. By \eqref{eq:cutoff2}, $\|\tilde{\chi}u\|_{H^1_0(\Omega)}\lesssim \|f\|_{L^2(\Omega)}$. This implies $\|\langle x\rangle^{2}g\|_{L^2}\lesssim \|f\|_{L^2(\Omega)}$. Thus by Proposition \ref{P:BB} and \eqref{eq:psiu},
	$$\sqrt{1+|\lambda|}\left\|\psi \langle x\rangle^{-s} u\right\|_{L^2}\lesssim \|f\|_{L^2}.$$
	This implies \eqref{eq:half_cutoff}. 
	By duality,
	\begin{equation}
	\label{eq:half_cutoff2}
	\sup_{\im \lambda\neq 0}
	\sqrt{1+\lvert \lambda\rvert}\,
	\bigl\lVert \chi(\Delta_D+\lambda)^{-1}\langle x\rangle^{-s}\bigr\rVert_{L^2(\Omega)\to L^2(\Omega)}<\infty, \;s>1/2.
	\end{equation} 
	By the same argument, using the improved bound \eqref{eq:half_cutoff2} instead of \eqref{eq:cutoff}, we obtain the conclusion of Proposition \ref{pp:VzeroND}.
\end{proof}

\subsection{Proof of Proposition \ref{pp:alpND}}
In this subsection, we first show (see \S \ref{sub:large}) the uniform estimate \eqref{eq:alpND} for sufficiently large $\lambda$ by perturbation from the case $V=0$.

We then prove \eqref{eq:alpND} for bounded $\lambda$,  by contradiction, extracting in \S \ref{sub:elliptic} a solution $v\not\equiv0$ of the equation $L_V v=\lambda v$ belonging to a weighted $L^2$ space. We will obtain a contradiction in \S \ref{sub:end}, by showing that $v$ must be identically zero.  For this we consider,  separately the cases $\lambda \notin [0,\infty)$ (which is straightforward, since $\lambda$ is in the resolvent set of $L_V$), $\lambda \in (0,\infty)$ (where we use a refinement of the proof of the absence of embedded eigenvalue for Schr\"odinger operators going back to Agmon \cite{Agmon75}) and $\lambda=0$ (where we use the assumption that zero is not a resonance for $L_V$).

\subsubsection{Large $\lambda$}
\label{sub:large}
We first fix $\nu_1,\nu_2>1/2$ with $\nu_1+\nu_2>2$, and prove that there exists $\Lambda>0$ such that 
\begin{equation}
\label{eq:large_lambda}
\sup_{\substack{|\lambda| \ge\Lambda\\ \im \lambda \neq 0}}
\sqrt{1+\lvert \lambda\rvert}\,
\bigl\lVert \langle x\rangle^{-\nu_1}(L_V-\lambda)^{-1}\langle x\rangle^{-\nu_2}\bigr\rVert_{L^2(\Omega)\to L^2(\Omega)}<\infty.
\end{equation}
Let $f\in L^2(\Omega)$, $\lambda\in \C$ with $\im \lambda\neq 0$. Let 
$$ u=\left( L_V-\lambda \right)^{-1}\langle x\rangle^{-\nu_2}f.$$
Then
$$ (-\Delta_D+V-\lambda)u= \langle x\rangle^{-\nu_2}f$$
and $u\in D(L_V)=H^2(\Omega)\cap H^1_0(\Omega)=D(-\Delta_D)$. Thus
$$ u=(-\Delta_D-\lambda)^{-1}\left( \langle x\rangle^{-\nu_2}f-V u  \right).$$
By Proposition \ref{pp:VzeroND}, 
\begin{equation}
\label{eq:interm_res}
\left\| \langle x\rangle^{-\nu_1} u\right\|_{L^2}\lesssim \frac{1}{\sqrt{|\lambda|}} \big( \left\|\langle x\rangle^{\nu_2} Vu\right\|_{L^2}+\|f\|_{L^2} \big). 
\end{equation} 
Since $\alpha>2$, we can assume without loss of generality, that $2<\nu_1+\nu_2<\alpha$. By the decay assumption on $V$, 
$$\left\| \langle x\rangle^{\nu_2} V u\right\|_{L^2}\lesssim \left\| \langle x\rangle^{\nu_2-\alpha}u\right\|_{L^2}\lesssim \left\| \langle x\rangle^{-\nu_1}u\right\|_{L^2}.$$
Going back to \eqref{eq:interm_res} and taking $\lambda\geq \Lambda$ large enough, we obtain the desired estimate
$$\left\| \langle x\rangle^{-\nu_1} u\right\|_{L^2}\lesssim \frac{1}{\sqrt{|\lambda|}} \|f\|_{L^2}.$$

\subsubsection{Bounded $\lambda$: extraction of the solution of an elliptic equation}
\label{sub:elliptic}
In this part, we continue with the proof that for all  $\nu_1,\nu_2>1/2$ with $\nu_1+\nu_2>2$, 
\begin{equation}
\label{eq:bounded_lambda'}
\sup_{\substack{-a\leqslant \re \lambda \leqslant \Lambda\\ \im \lambda \neq 0} }
\sqrt{1+\lvert \lambda\rvert}\,
\bigl\lVert \langle x\rangle^{-\nu_1}(L_V-\lambda)^{-1}\langle x\rangle^{-\nu_2}\bigr\rVert_{L^2(\Omega)\to L^2(\Omega)}<\infty.
\end{equation} 
Taking the complex conjugate, it suffices to consider $\lambda\in\C$ such that $\im \lambda  $ is  positive. 
Moreover, by the self-adjointness of $L_V$, $\|(L_V-\lambda)^{-1}\|_{L^2\to L^2}\lesssim 1/|\im \lambda|$. 
Thus it is sufficient to prove
\begin{equation}
\label{eq:bounded_lambda}
\sup_{\substack{-a\leqslant \re \lambda \leqslant \Lambda \\ 0<\im \lambda \leq 1}  }
\sqrt{1+\lvert \lambda\rvert}\,
\bigl\lVert \langle x\rangle^{-\nu_1}(L_V-\lambda)^{-1}\langle x\rangle^{-\nu_2}\bigr\rVert_{L^2(\Omega)\to L^2(\Omega)}<\infty.
\end{equation} 
\noindent\emph{Step 1. Compactness and the Fredholm alternative.} 	
We will use the identity
\begin{equation*}
\langle x\rangle^{-\nu_1}(L_V-\lambda)^{-1}\langle x \rangle^{-\nu_2}\,T_V(\lambda)
=\langle x\rangle^{-\nu_1}(-\Delta_D-\lambda)^{-1}\langle x\rangle^{-\nu_2}.  
\end{equation*}
where 
$$T_V(\lambda)=1+ \langle x\rangle^{\nu_2} V\left(-\Delta_D-\lambda\right)^{-1}\langle x\rangle^{-\nu_2}.$$
Again, we assume (without loss of generality) that $2<\nu_1+\nu_2<\alpha$.
In view of Proposition \ref{pp:VzeroND}, it is sufficient to prove that for all $\lambda$ such that $0<\im \lambda\leq 1$ and $-a\leq \re \lambda\leq \Lambda$, $T_V(\lambda)$ is invertible, with continuous inverse, from $L^2(\Omega)$ to $L^2(\Omega)$ and that 
\begin{equation}
\label{eq:bound_TV}
\sup_{\substack{-a\leqslant \re \lambda\leqslant \Lambda \\ 0<\im \lambda\leq 1}} \left\|T_V(\lambda)^{-1}\right\|_{L^2\to L^2}<\infty.
\end{equation}
In this step, we prove the invertibility of $T_V(\lambda)$ at fixed $\lambda$. We will prove the estimate \eqref{eq:bound_TV} in the two last steps. 
Since $ (-\Delta_D-\lambda)^{-1}$ is continuous from $L^2(\Omega)$ to $H^{1}_0(\Omega)$ and we have assumed $\nu_2<\alpha$, we have 
$$ \lim_{|x|\to\infty} \langle x\rangle^{\nu_2}V(x)=0,$$
and thus (using the Rellich-Kondrachov theorem, see e.g. \cite[Subsection 5.7]{Evans10Bo}), we see that $f\mapsto \langle x \rangle^{\nu_2}V\,f$ is a compact operator from $H^2$ to $L^2$, which yields that
$\langle x\rangle^{\nu_2} V\left(-\Delta_D-\lambda\right)^{-1}\langle x\rangle^{-\nu_2}$ is a compact operator from $L^2(\Omega)$ to $L^2(\Omega)$. By the Fredholm alternative, $T_V(\lambda)$ is invertible if and only if $-1$ is not an eigenvalue, in $L^2(\Omega)$, of the operator $\langle x\rangle^{\nu_2} V\left(-\Delta_D-\lambda\right)^{-1}\langle x\rangle^{-\nu_2}$. 

Let $u\in L^2(\Omega)$ such that
$$ \langle x\rangle^{\nu_2} V\left(-\Delta_D-\lambda\right)^{-1}\langle x\rangle^{-\nu_2}u=-u.$$
Letting $f=(-\Delta_D-\lambda)^{-1}\langle x\rangle^{-\nu_2} u$, we have $f\in H^1_0(\Omega)\cap H^2(\Omega)$ and
$$ \langle x\rangle^{\nu_2} V f=-\langle x\rangle^{\nu_2} (-\Delta_D-\lambda)f.$$
Thus $(L_V-\lambda)f=0$, with $f\in D(L_V)$. Since $\im \lambda\neq 0$, this implies $f=0$ and $u=0$. Thus $-1$ is not an eigenvalue for $\langle x\rangle^{\nu_2} V\left(-\Delta_D-\lambda\right)^{-1}\langle x\rangle^{-\nu_2}$ and $T_V(\lambda)$ is invertible.

\medskip

\noindent\emph{Step 2. Existence of a special element $v\not\equiv 0$ } In this step and the next one, we continue to prove the uniform bound \eqref{eq:bound_TV} on the inverse of $T_V(\lambda)$.

 We argue by contradiction. If \eqref{eq:bound_TV} does not hold, there exists a sequence $(\lambda_n)_n$ with $-a\leq \re \lambda_n\leq \Lambda$, $0<\im \lambda_n\leq 1$ such that 
\begin{equation}
\label{eq:V11} 
\left\| T_V(\lambda_n)^{-1}\right\|_{L^2\to L^2} \geq n.
\end{equation} 
By \eqref{eq:V11}, there exists $u_n\in L^2(\Omega)$ such that 
\begin{equation}
\label{eq:V12}
\left\|u_n\right\|_{L^2(\Omega)}\leq \frac{2}{n},\quad \left\|T_V(\lambda_n)^{-1}u_n\right\|_{L^2(\Omega)}=1.
\end{equation} 
Let $v_n=\left( -\Delta_D-\lambda_n \right)^{-1}\langle x\rangle^{-\nu_2} T_V(\lambda_n)^{-1}u_n$. By \eqref{eq:V12} and Proposition \ref{pp:VzeroND}, we see that $(\langle x\rangle^{-\nu_1}v_n)_n$ is bounded in $L^2(\Omega,dx)$, i.e. that $(v_n)_n$ is bounded in $L^2\left(\Omega,\langle x\rangle^{-2\nu_1}dx\right)$. Extracting subsequences, we assume 
\begin{equation}
\label{eq:V20} 
v_n\xrightharpoonup[n\to\infty]{} v\text{ in }L^2\left(\Omega,\langle x\rangle^{-2\nu_1}dx\right),
\end{equation} 
where $v\in L^2\left(\Omega,\langle x\rangle^{-2\nu_1}dx\right)$. We will prove
\begin{gather}
\label{eq:V21} \lim_{n\to\infty} Vv_n=Vv \text{ strongly in }L^2\left(\Omega,\langle x\rangle^{2\nu_2}dx\right)\\
\label{eq:V21'} \lim_{n\to\infty} \nabla v_n=\nabla v\text{ strongly in }L^2_{\mathrm{\mathrm{loc}}}(\Omega)\\
\label{eq:V22}\lim_{n\to\infty} (\Delta_D+\lambda_n)^{-1} Vv=v\text{ strongly in }L^2\left(\Omega, \langle x\rangle^{-2\nu'}dx \right), \quad \forall \nu'>\nu_1\\
\label{eq:V23} \nabla v\in L^2_{\mathrm{\mathrm{loc}}}(\Omega),\quad v_{\restriction\partial \Omega}=0\\
\label{eq:V24}
v\not \equiv 0.
\end{gather}
By the definition of $v_n$, 
\begin{equation}
\label{eq:equation_vn}
(-\Delta_D-\lambda_n)v_n=\langle x\rangle^{-\nu_2} T_V(\lambda_n)^{-1}u_n. 
\end{equation} 
The sequence $(\lambda_n)_n$ is bounded. Since $(T_V(\lambda_n)^{-1}u_n)_n$ is bounded in $L^2(\Omega)$ by \eqref{eq:V12} and $(v_n)_n$ is bounded in $L^2(\Omega,\langle x\rangle^{-2\nu_1}dx)$, we deduce that $(\Delta_Dv_n)_n$ is bounded in $L^2(\Omega,\langle x\rangle^{-2\nu_1}dx)$. Using the Rellich-Kondrachov Theorem, we obtain that $(v_n)_n$ converges strongly to $v$ in $H^1_{\mathrm{\mathrm{loc}}}(\Omega)$, which yields \eqref{eq:V21'}. 

Also, we have 
$$\langle x\rangle^{\nu_2}V (v_n-v)=\langle x\rangle^{\nu_2} V \langle x\rangle^{\nu_1}  \langle x\rangle^{-\nu_1}(v_n-v).$$
Assuming $\nu_1+\nu_2<\alpha$, we deduce
$$\lim_{R\to\infty}\limsup_{n\to\infty}\left\|\langle x\rangle^{\nu_2}V (v_n-v)\indic_{\{|x|>R\}}\right\|_{L^2(\Omega)}=0.$$
Together with the strong convergence of $v_n$ in $L^2_{\loc}(\Omega)$ that we have already proved, we deduce \eqref{eq:V21}.

By the same strategy, we can prove
\begin{equation}
 \label{eq:V21''}
 \forall \nu'>\nu_1,\quad \lim_{n\to\infty} v_n=v \text{ strongly in }L^2\Big(\Omega,\langle x\rangle^{-2\nu'}\Big)
\end{equation} 
Since $v_n\in H^1_0(\Omega)$ for all $n$, we see that \eqref{eq:V21'} implies \eqref{eq:V23} .

Next, by \eqref{eq:equation_vn}, we see that 
$$u_n=T_V(\lambda_n)\Big( \langle x \rangle^{\nu_2} (-\Delta_D-\lambda_n)v_n \Big)=\langle x\rangle^{\nu_2} \left( -\Delta_D-\lambda_n \right)v_n+\langle x\rangle^{\nu_2} V v_n,$$
i.e.
\begin{equation}
\label{eq:V30}
(-\Delta_D-\lambda_n+V)v_n=\langle x\rangle^{-\nu_2} u_n.
\end{equation} 
This yields 
\begin{equation}
\label{eq:V30'}
v_n=\left( -\Delta_D-\lambda_n \right)^{-1}\left( \langle x\rangle^{-\nu_2}u_n-Vv_n \right).
\end{equation} 
Using that by \eqref{eq:V12}, $(u_n)_n$ converges to $0$ in $L^2(\Omega)$, and Proposition \ref{pp:VzeroND}, we deduce
$$ \lim_{n\to\infty} \left\| \langle x\rangle^{-\nu_1} (\Delta_D+\lambda_n)^{-1}\langle x\rangle^{-\nu_2}u_n\right\|_{L^2}=0.$$
Furthermore
$$\left( \Delta_D+\lambda_n \right)^{-1}\left( Vv-Vv_n \right)=\left( \Delta_D+\lambda_n \right)^{-1}\langle x\rangle^{-\nu_2} \langle x\rangle^{\nu_2} \left( Vv-Vv_n \right),$$
and thus, by \eqref{eq:V21} and Proposition \ref{pp:VzeroND}
$$\lim_{n\to\infty} \left\|\langle x\rangle^{-\nu_1} \left( \Delta_D+\lambda_n \right)^{-1} \left( Vv-Vv_n \right)\right\|_{L^2}=0.$$
Combining with \eqref{eq:V30'} and \eqref{eq:V21''} we deduce \eqref{eq:V22}.

Next, we observe that $T_V(\lambda_n)T_V(\lambda_n)^{-1}u_n=u_n$, and thus
$$ T_V(\lambda_n)^{-1}u_n+\langle x\rangle^{\nu_2}V \left( -\Delta_D-\lambda_n \right)^{-1}\langle x\rangle^{-\nu_2} T_V\left( \lambda_n \right)^{-1}u_n=u_n.$$
By the definition of $v_n$, 
$$T_V(\lambda_n)^{-1}u_n+\langle x\rangle^{\nu_2} Vv_n=u_n.$$
Thus by \eqref{eq:V21}, $T_V(\lambda_n)^{-1}u_n$ converges to $-\langle x\rangle^{\nu_2}Vv$ strongly in $L^2$. 

Since $\left\| T_V(\lambda_n)^{-1}u_n\right\|_{L^2}=1$, we deduce $\left\| \langle x\rangle^{\nu_2}Vv\right\|_{L^2}=1$, which implies \eqref{eq:V24}.

\subsubsection{Bounded $\lambda$: end of the proof} 
\label{sub:end}
In this part, we conclude the proof of the uniform bound  \eqref{eq:bound_TV}. Extracting subsequences, we may assume
$$\lim_{n\to\infty}\lambda_n=\lambda,\quad -a\leq  \re \lambda\leq \Lambda,\quad 0\leq \im \lambda\leq 1.$$
We distinguish between three cases according to the values of $\lambda$, .

\smallskip

\noindent\emph{Case 1: $\lambda\notin [0,\infty)$.} Thus $\lambda$ is not in the spectrum of $-\Delta_D$. Using the continuity of the resolvent of $-\Delta_D$ on its resolvent set, and \eqref{eq:V22}, we deduce
\begin{equation}
\label{eq:V40}
v=(\Delta_D+\lambda)^{-1}Vv.
\end{equation} 
Since $Vv\in L^2$, \eqref{eq:V40} implies $v\in D(-\Delta_D)=H^2(\Omega)\cap H^1_0(\Omega)$, i.e.
$$L_Vv=\lambda v,$$
a contradiction since by \eqref{eq:V24}, $v$ is not identically $0$, and  $\re \lambda \geq -a$ implies that $\lambda$ is not an eigenvalue of $L_V$. 

\smallskip

	\noindent\emph{Case 2: $\lambda\in (0,\infty)$.} 
Taking the limit (in the distributional sense) of \eqref{eq:V30}, we obtain
\begin{equation}
\label{eq:V60}
(-\Delta +V-\lambda)v=0
\end{equation} 
We shall prove that 
\begin{equation}
\label{eq:embedd}
v\in L^2(\Omega).
\end{equation}
Then,  $v\equiv 0$ by the main result of Ionescu and Jerison \cite{IoJe03}  that a function $v\in H^1_{\mathrm{loc}}(\Omega)$ satisfying \eqref{eq:embedd} and $L_V v=\lambda v$ for some $\lambda>0$ must vanish identically. This contradicts \eqref{eq:V24}. \smallskip

To prove \eqref{eq:embedd} we will use results from \cite[Chapter 14]{Hormander.T2} (which essentially comes from \cite{Agmon75}). We will need the two function spaces $B$ and $B^*$ which are defined in Section 14.1 of \cite{Hormander.T2} as the subspaces of $L^2_{loc}(\R^N)$ such that the following norms are finite:
\begin{align}
\label{defB}
\|\varphi\|_{B}&=\left( \int_{|x|\leq 1}|\varphi(x)|^2dx \right)^{1/2}+\sum_{j\geq 0}\left( 2^j \int_{2^j<|x|<2^{j+1}} |\varphi(x)|^2dx\right)^{1/2}\\
\label{defB*}
\|\varphi\|_{B^*}&=\left( \int_{|x|\leq 1}|\varphi(x)|^2dx \right)^{1/2}+\sup_{j\geq 0}\left( 2^{-j} \int_{2^j<|x|<2^{j+1}} |\varphi(x)|^2dx\right)^{1/2}.
\end{align}
One has the following continuous embeddings for any $s>1/2$:
\begin{equation}
 \label{inclusion}
 L^2\left(\R^N,\langle x\rangle^{2s}dx\right)\subset B,\quad  B^*\subset L^2\left(\R^N,\langle x\rangle^{-2s}dx\right).
\end{equation} 
From Theorem 14.3.2 of \cite{Hormander.T2}\footnote{In \cite{Hormander.T2}, the definition of a \emph{weaker} polynomial, that appears in the statement of Theorem 14.3.2, is given in Definition 10.3.4, using the notation $\tilde{P}$ defined in (10.1.7) of this book.}, $(-\Delta_{\R^N}-\lambda\pm i0)^{-1}$, $\lambda>0$, defined as the limits of the resolvent on the real axis, which are bounded operators from $L^2_{\comp}$ to $L^2_{\loc}$ extend to bounded operators from $B$ to $B^*$.

We divide the proof into three steps.

\smallskip

\noindent\emph{Step 1: outgoing condition.}

 We prove (as a consequence of the fact that $\im\lambda_n>0$ for all $n$), that $v$ satisfies an outgoing condition at infinity.  Indeed, let $\chi \in C^{\infty}(\R^N)$ such that $\chi=0$ in a neighborhood of $K$, and $\chi(x)=1$ for large $x$. Let
\begin{equation}
\label{eq:w_n}
w_n =(\Delta_D+\lambda_n)^{-1}(Vv).
\end{equation}
Taking a smaller $\nu_1$ if necessary and using that $\nu_2>1/2$ and $\alpha>2$, we can assume $\nu_1<\alpha-1/2$.
We fix $\nu'$ such that $\nu_1<\nu'<\alpha-1/2$.
By \eqref{eq:V22}, $w_n$ converges to $v$, strongly in $L^2\left( \langle x\rangle^{-2\nu'} dx \right)$. By the equation $\Delta_D w_n+\lambda_n w_n=Vv$ and \eqref{eq:V60}, we see that this convergence also holds locally in $H^2$. As a consequence,
\begin{multline*}
(\Delta +\lambda_n)(\chi w_n)=(\Delta \chi)w_n+2\nabla \chi\cdot\nabla w_n+\chi V v\\
\underset{n\to\infty}{\longrightarrow} f:=(\Delta \chi)v+2\nabla \chi\cdot\nabla v+\chi Vv\text{ in }L^2\left( \langle x\rangle^{2\nu}dx \right),\quad \nu:=\alpha-\nu'>1/2.
\end{multline*}

Using the free resolvent bound (Proposition \ref{P:BB}), we see that 
$$\chi w_n=\left( \Delta_{\R^N}+\lambda_n \right)^{-1} f+o(1)$$
in $L^2\left( \langle x\rangle^{-2\tilde{\nu}}dx \right)$, for all $\tilde{\nu}>2-\nu$ such that $\tilde{\nu}>1/2$. As a consequence  
\begin{equation}
\label{eq:outgoing}
\chi v=\left(\Delta_{\R^N}+(\lambda+i0) \right)^{-1}f.
\end{equation}

\smallskip

\noindent\emph{Step 2: incoming condition.}
We next prove that $v$ also satisfies an incoming condition: 
\begin{equation}
\label{eq:incoming}
\chi v=\left( \Delta_{\R^N}+(\lambda-i0) \right)^{-1}f.
\end{equation}
Note that $f\in B$ by \eqref{inclusion}, and that $\left(\Delta_{\R^N}+(\lambda+i0) \right)^{-1}$ is a bounded operator from $B$ to $B^*$ by Theorem 14.3.2 of \cite{Hormander.T2}. Thus, by \eqref{eq:outgoing}, $\chi v$ is an element of $B^*$. Using Corollary 14.3.9 of \cite{Hormander.T2}, we can deduce \eqref{eq:incoming} from \eqref{eq:outgoing} if $\int_{\R^N} \chi v \overline{f}$ is real. We compute the imaginary part of this quantity using the definition of $f$ and the fact that $V$ is real-valued:
\begin{equation*}
 \im\int\chi v \overline{f}=2\im \int\nabla \chi \cdot\nabla \overline{v} \chi v=\im \int \nabla \chi^2\cdot\nabla \overline{v} v=-\im\int \chi^2\Delta \overline{v} v=0
\end{equation*}
by the equation \eqref{eq:V60}.

\smallskip

\noindent\emph{Step 3: decay of $v$.}
In this last step, we conclude the proof of \eqref{eq:embedd}, using that if $v$ and $f$ satisfy both \eqref{eq:outgoing} and \eqref{eq:incoming}, $v$ decays in some sense at most as $|x|f$ at infinity (see Theorem 14.3.7 of \cite{Hormander.T2}). In this Theorem, this decay is formulated using two spaces $B$ and $B^*$  defined in \eqref{defB}, \eqref{defB*}.

Let $\eps>0$ (that will ultimately go to $0$). Let 
$$\mu_{\eps}(\sigma)=(1+\sigma)(1+\eps \sigma)^{-1},$$
that satisfies $0<(1+\sigma)\mu_{\eps}'(\sigma)<\mu_{\eps}(\sigma)$ for all $\sigma>0$. By Theorem 14.3.7 of \cite{Hormander.T2}, since $\chi v$ satisfies \eqref{eq:outgoing} and \eqref{eq:incoming} and $f\in L^2\left( \R^N,\langle x\rangle^{2\nu}dx \right)\subset B$, we have
$$ \big\|\mu_{\eps}(|x|) \chi v\big\|_{B^*}\leq C \big\|\mu_{\eps}(|x|) f\big\|_{B},$$
where the constant $C$ is independent of $\eps$. Using the embeddings \eqref{inclusion} with $s=\frac{3}{4}$, we obtain 
$$ \int_{\R^N} \langle x\rangle^{-3/2} \mu_{\eps}^2(|x|)|\chi v|^2dx\leq C\int_{\R^N} \langle x\rangle^{3/2} |f|^2 \mu_{\eps}^2(|x|)dx.$$
Combining with the definitions of $f$ and $\chi$ and the fact that $\mu_{\eps}(|x|)\leq 1+|x|\leq \sqrt{2}\langle x\rangle$, we deduce that for large $R$, there exists a constant $C_R$ such that
\begin{multline}
 \int_{\R^N} \langle x\rangle^{-3/2} |\chi v|^2\mu_{\eps}^2(|x|)dx\\
 \leq C_R \int_{\substack{|x|\leq R\\ x\in \Omega}}\left( |v|^2+|\nabla v|^2 \right)dx+C\int_{|x|\geq R} \langle x\rangle^{3/2} V^2 |\chi v|^2 \mu_{\eps}^2(|x|) dx\\
 \leq C_R \int_{\substack{|x|\leq R\\ x\in \Omega}}\left( |v|^2+|\nabla v|^2 \right)dx+C\sup_{|x|\geq R}\left(\langle x\rangle^{3} V^2(x)\right)  \int_{|x|\geq R} \langle x\rangle^{-3/2} |\chi v|^2 \mu_{\eps}^2(|x|) dx.
\end{multline}
By the decay assumption \eqref{decayV} on $V$, we can choose $R$ large enough so that $C\sup_{|x|\geq R}\left(\langle x\rangle^{3} V^2(x)\right)\leq \frac{1}{2}$. We thus obtain, uniformly with respect to $\eps>0$:
$$ \int_{\R^N} \langle x\rangle^{-3/2} |\chi v|^2\mu_{\eps}^2(|x|)dx\leq 2C_R \int_{\substack{|x|\leq R\\ x\in \Omega}}\left( |v|^2+|\nabla v|^2 \right)dx.$$
Letting $\eps\to 0$ we deduce (using also that $v\in L^2_{loc}(\Omega)$)
$$ \int_{\Omega} \langle x\rangle^{1/2} |v|^2dx<\infty,$$
which proves \eqref{eq:embedd}, concluding the proof in Case 2.

\smallskip

\noindent	\emph{Case 3: $\lambda=0$.}

By \eqref{eq:V22}, $v\in L^2\left( \langle x\rangle^{-2\nu'}dx \right)$, for all $\nu'>\nu_1$, $\Delta v=Vv$, $v_{\restriction \partial \Omega}=0$ and $v$ is not identically $0$. 
By Proposition \ref{P:decay_resonance} in the appendix, $v$ is a zero energy state, which yields a contradiction.
\medskip

\section{Dispersive estimates for the wave equation}
\label{S:wave}

In all this section we will assume that $N\geq 3$, that $K$ is nontrapping in the sense that Assumption \ref{As:K} holds, that $V$ satisfies Assumption \ref{As:V} and that $L_V$ has no zero energy state (Assumption \ref{As:zero}).
If $v$ is a function of time and space, we will denote $\vec{v}=(v,\partial_tv)$

\subsection{Energy estimates}


We first give an energy estimate that will be needed in the proof. We denote by $\dot{H}^1_V=P_c(\dot{H}^1_0)$, with the scalar product and norm:
\begin{equation*}
\left(\varphi,\psi\right)_{\dot{H}^1_V}=\int_{\Omega} \nabla \varphi\cdot \nabla \overline{\psi}+\int_{\Omega} V \varphi\overline{\psi},\quad \|\varphi\|^2_{\dot{H}^1_V}=(\varphi,\varphi)_{\dot{H}^1_V}.
\end{equation*} 
\begin{lemma}
	\label{lem:equiv_norm}
	We have 
	\begin{equation}
	\label{eq:equi-en}
	\forall u\in \dot{H}_V^1,\quad \|u\|_{\dot{H}^1_V}\approx \|u\|_{\dot{H}^1_0}.
	\end{equation} 
\end{lemma}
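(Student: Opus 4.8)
The plan is to establish the two-sided bound \eqref{eq:equi-en} by showing separately that $\|u\|_{\dot H^1_V}\lesssim \|u\|_{\dot H^1_0}$ and $\|u\|_{\dot H^1_0}\lesssim \|u\|_{\dot H^1_V}$ for all $u\in\dot H^1_V=P_c(\dot H^1_0)$. The first inequality is the easy direction: by definition $\|u\|_{\dot H^1_V}^2=\|u\|_{\dot H^1_0}^2+\int_\Omega V|u|^2$, so it suffices to control $\int_\Omega V|u|^2$ by $\|u\|_{\dot H^1_0}^2$. Since $V$ is bounded (by Assumption \ref{As:V}, $|V(x)|\le C\langle x\rangle^{-\alpha}$ with $\alpha>2$), and in particular $V\in L^{N/2}(\Omega)$, I would use Hölder together with the Sobolev inequality $\|u\|_{L^{2N/(N-2)}(\Omega)}\lesssim\|\nabla u\|_{L^2(\Omega)}$ (valid on $\dot H^1_0(\Omega)$ since $N\ge 3$) to get $\bigl|\int_\Omega V|u|^2\bigr|\le \|V\|_{L^{N/2}}\|u\|_{L^{2N/(N-2)}}^2\lesssim \|u\|_{\dot H^1_0}^2$, which gives $\|u\|_{\dot H^1_V}\lesssim\|u\|_{\dot H^1_0}$.

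For the reverse inequality, the point is that on the continuous spectral subspace the quadratic form $\langle L_V u,u\rangle=\|u\|_{\dot H^1_V}^2$ is nonnegative and, in fact, coercive relative to $\|u\|_{\dot H^1_0}^2$. I would argue by contradiction: suppose there is a sequence $u_n\in\dot H^1_V$ with $\|u_n\|_{\dot H^1_0}=1$ and $\|u_n\|_{\dot H^1_V}\to 0$. After extracting a subsequence, $u_n\rightharpoonup u$ weakly in $\dot H^1_0(\Omega)$, and by Rellich–Kondrachov $u_n\to u$ strongly in $L^2_{\mathrm{loc}}$; combined with the decay of $V$ (exactly as in the compactness arguments of \S\ref{sub:elliptic}, since $V\langle x\rangle^{\nu}\to 0$ for suitable $\nu$) one gets $\int_\Omega V|u_n|^2\to\int_\Omega V|u|^2$. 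Hence $\|u\|_{\dot H^1_0}^2=\lim\bigl(\|u_n\|_{\dot H^1_V}^2-\int_\Omega V|u_n|^2\bigr)=-\int_\Omega V|u|^2$, so $\|u\|_{\dot H^1_V}^2=\langle L_V u,u\rangle=0$. Testing against $\varphi\in C_0^\infty(\Omega)$ shows $L_Vu=0$ in the distributional sense; since $u\in\dot H^1_0(\Omega)$ decays (it lies in $L^{2N/(N-2)}$, hence $\langle x\rangle^{-1}u\in L^2$), $u$ is a zero-energy state in the sense of Definition \ref{def:resonance}, and by Assumption \ref{As:zero} this forces $u\equiv 0$. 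On the other hand $u\in\dot H^1_V$ (the subspace is weakly closed, being $P_c(\dot H^1_0)$ with $P_c$ of finite rank complement), and from $u=0$, $\int_\Omega V|u_n|^2\to 0$, so $\|u_n\|_{\dot H^1_0}^2=\|u_n\|_{\dot H^1_V}^2-\int_\Omega V|u_n|^2\to 0$, contradicting $\|u_n\|_{\dot H^1_0}=1$.

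One should be slightly careful that the weak limit $u$ indeed belongs to $\dot H^1_V$: since $P_c$ acts by the finite-rank formula \eqref{formulaPc} with $Y_j$ exponentially decaying, $v\mapsto P_c v$ is continuous on $\dot H^1_0(\Omega)$ and its kernel complement is finite-dimensional, so $\dot H^1_V$ is a closed (hence weakly closed) subspace of $\dot H^1_0(\Omega)$; thus $u_n\in\dot H^1_V$ and $u_n\rightharpoonup u$ give $u\in\dot H^1_V$. Then $u$ being a zero-energy state contradicts Assumption \ref{As:zero} unless $u=0$, and the argument closes.

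I expect the main obstacle to be the justification of the limit $\int_\Omega V|u_n|^2\to\int_\Omega V|u|^2$, i.e. showing that multiplication by $V$ is a compact perturbation at the level of the quadratic forms on $\dot H^1_0$. This is where the decay hypothesis \eqref{decayV} is essential: one splits $\Omega$ into $\{|x|\le R\}$, where strong $L^2_{\mathrm{loc}}$ convergence from Rellich–Kondrachov handles the integral, and $\{|x|>R\}$, where $\|V\indic_{|x|>R}\|_{L^{N/2}}\to 0$ as $R\to\infty$ (using $\alpha>2$, which gives $V\in L^{N/2}$ with a tail small in that norm), uniformly in $n$ by the Sobolev bound $\|u_n\|_{L^{2N/(N-2)}}\lesssim 1$. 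This is entirely analogous to the compactness argument already carried out in Step 1 of \S\ref{sub:elliptic}, so it poses no genuine difficulty, merely bookkeeping.
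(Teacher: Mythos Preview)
Your approach is essentially the paper's: Hardy/Sobolev for the easy direction, contradiction plus compactness of multiplication by $V$ for the reverse. One slip: from $u_n\rightharpoonup u$ weakly in $\dot H^1_0$ you only get $\|u\|_{\dot H^1_0}^2\le\liminf_n\|u_n\|_{\dot H^1_0}^2$, not the equality you wrote; this yields $\|u\|_{\dot H^1_V}^2\le 0$, and then the nonnegativity of the form on $\dot H^1_V$ (which you do mention) gives $\|u\|_{\dot H^1_V}^2=0$ --- the paper argues exactly this way. Also, ``testing against $\varphi\in C_0^\infty(\Omega)$'' from $\|u\|_{\dot H^1_V}^2=0$ to $L_Vu=0$ needs a word: Cauchy--Schwarz for the positive semidefinite form $(\cdot,\cdot)_{\dot H^1_V}$ gives $(u,P_c\varphi)_{\dot H^1_V}=0$ for all $\varphi$, and then $\langle u,L_V\varphi\rangle_{L^2}=\langle P_cu,L_V\varphi\rangle_{L^2}=\langle u,L_VP_c\varphi\rangle_{L^2}=(u,P_c\varphi)_{\dot H^1_V}=0$; the paper makes this explicit.
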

\begin{proof}
	Since the essential spectrum of $L_VP_c$ is $[0,\infty)$ and it has no negative eigenvalue, we have 
	\begin{equation}
	\label{seminorm}
	\forall \varphi\in \dot{H}^1_V,\quad
	\int_{\Omega} |\nabla \varphi|^2+\int_{\Omega} V |\varphi|^2\geq 0,
	\end{equation} 
	and thus $\|\cdot\|_{\dot{H}^1_V}$ is a semi-norm on $\dot{H}^1_V$.
	By the self-adjointness, and the Hardy inequality, we have
	\begin{equation}
	\label{eq:equi_en1}
	\int_{\Omega} |\nabla u|^2+\int_{\Omega} V(x)
	\,\lvert u \rvert^2dx
 \lesssim \int_{\Omega} |\nabla u|^2.
	\end{equation}
	
	We next prove
	\begin{equation}
	\label{eq:equi_en2}
	\int_{\Omega}|\nabla u|^2\lesssim\int_{\Omega} |\nabla u|^2+\int_{\Omega} V |u|^2
	,\quad \forall \;u\in P_c\dot{H}^1_0(\Omega),
	\end{equation}
	We argue by contradiction, using the non-existence of zero energy states for $L_V$. Assume that \eqref{eq:equi_en2} fails.
	Then, there exists a sequence of functions $(u_\nu)_{\nu}$ in $\dot{H}^1_V$ such that 
	\begin{equation}
	\label{eq:cv}
	\|u_\nu\|_{\dot{H}^1_0(\Omega)}=1,\quad\lim_{\nu\to\infty}     \int_{\Omega}|\nabla u_{\nu}|^2+\int_{\Omega} V
	\,\lvert u_{\nu} \rvert^2=0.
	\end{equation}
	Thus there exists a subsequence, still denoted by $(u_\nu)_{\nu}$,  converging weakly to some function $u_*$ in $\dot{H}^1_0$. By \eqref{eq:cv}, using the decay assumption \eqref{decayV} on $V$ we have 
	$$\int_{\Omega}V|u_*|^2=\lim_{\nu\to\infty} \int_{\Omega} V|u_{\nu}|^2=-1,$$
	and thus $u_*\neq 0$. Using the weak convergence of $(u_{\nu})_{\nu}$, we also see that $\int_{\Omega} u_*Y_j=0$ for all $j\in \{1,\ldots,J\}$ and thus $u_*\in P_c\dot{H}^1_0(\Omega)$. Furthermore, by a standard property of the weak convergence, 
	$$ \int_{\Omega}|\nabla u_*|^2\leq \liminf_{\nu\to\infty} \int_{\Omega}|\nabla u_{\nu}|^2$$
	and thus (using also \eqref{seminorm})
	$$\int_{\Omega}|\nabla u_*|^2+\int_{\Omega}V|u_*|^2=0.$$
	By \eqref{seminorm} and Cauchy-Schwarz's inequality, we obtain 
	\begin{equation}
	\label{eq:OP}
	 \forall \varphi\in C_0^{\infty}(\Omega),\quad   \int_{\Omega}\nabla u_*\cdot\nabla P_c\varphi+\int_{\Omega}Vu_*P_c\varphi=0.
	\end{equation}
	This can be rewritten as
	$$ \forall \varphi\in C_0^{\infty}(\Omega),\quad   \left\langle u_*,L_V\varphi\right\rangle_{L^2}=\left\langle u_*,L_VP_c\varphi\right\rangle_{L^2}=0,$$
	where to obtain the left-hand side inequality, we have used that $P_cu_*=u_*$ and that $P_c$ is self-adjoint for the $L^2$ scalar product. Thus $L_Vu_*=0$ in the sense of distributions. Since $u_*\in \dot{H}^1_0$, we deduce from Hardy's inequality that $u_*$ is a zero energy state for $L_V$, which contradicts the assumption that zero is not an energy state for $L_V$.
	The proof is complete.
\end{proof}

\begin{corollary}
	\label{cor:energy-G}
	There exists a constant $C$ such that for all solutions $u$ of \eqref{Wave_intro}, \eqref{Wave_ID}, we have
	\begin{equation}
	\label{eq:energy-est}
	\lVert\PPP_c\vec{u}\rVert_{L^\infty(\R;  \dot{ H}^1_0(\Omega)\times L^2(\Omega))}\le C\bigl( \|\PPP_c(u_0,u_1)\|_{ \dot{ H}^1_0(\Omega)\times L^2(\Omega)}
	+\| P_c f\|_{L^1_tL^2_x(\R\times \Omega)}\bigr).
	\end{equation}
\end{corollary}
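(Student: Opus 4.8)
The plan is to project the equation onto the continuous spectral subspace and then run the classical energy (multiplier) estimate, converting the resulting quantity, conserved up to the forcing term, into a bound on the $\dot H^1_0\times L^2$ norm by means of Lemma \ref{lem:equiv_norm}.

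First I would note that $P_c$ commutes with $L_V$: this is immediate from the explicit formula \eqref{formulaPc}, since the $Y_j$ are eigenfunctions of the self-adjoint operator $L_V$ associated with real eigenvalues, and $P_c$ also preserves the domain $H^2(\Omega)\cap H^1_0(\Omega)$ because the $Y_j$ belong to it. Hence, if $u$ solves \eqref{Wave_intro}, \eqref{Wave_ID}, then $w:=P_cu$ solves
\[
\partial_t^2 w+L_Vw=P_cf,\qquad (w,\partial_tw)_{\restriction t=0}=\PPP_c(u_0,u_1),
\]
with Dirichlet boundary condition, and $w(t)\in\dot H^1_V$ for all $t$. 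By a standard density argument (prove the estimate first for $(u_0,u_1)\in (H^2(\Omega)\cap H^1_0(\Omega))\times H^1_0(\Omega)$ and $f$ smooth in time with values in $L^2(\Omega)$, for which the computations below are fully justified, then pass to the limit) it suffices to treat such regular solutions.

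Next I would introduce the energy
\[
\mathcal E(t)^2:=\|\partial_tw(t)\|_{L^2(\Omega)}^2+\|w(t)\|_{\dot H^1_V}^2
=\|\partial_tw(t)\|_{L^2}^2+\int_\Omega|\nabla w(t)|^2+\int_\Omega V|w(t)|^2 ,
\]
which is nonnegative by \eqref{seminorm}. Pairing the equation with $\overline{\partial_tw}$ in $L^2(\Omega)$, taking real parts and integrating by parts in space (the boundary term vanishes because $w_{\restriction\partial\Omega}=0$) gives
\[
\tfrac12\tfrac{d}{dt}\mathcal E(t)^2=\re\int_\Omega \overline{\partial_tw}\,P_cf\le \|P_cf(t)\|_{L^2}\,\|\partial_tw(t)\|_{L^2}\le \|P_cf(t)\|_{L^2}\,\mathcal E(t).
\]
Consequently $\frac{d}{dt}\sqrt{\mathcal E(t)^2+\eps}\le \|P_cf(t)\|_{L^2}$ for every $\eps>0$; letting $\eps\to0$ and integrating between $0$ and $t$ (for $t\ge0$, and symmetrically for $t\le0$) yields
\[
\mathcal E(t)\le \mathcal E(0)+\|P_cf\|_{L^1_tL^2_x(\R\times\Omega)},\qquad t\in\R .
\]

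Finally I would invoke Lemma \ref{lem:equiv_norm}: since $w(t)$ and $P_cu_0$ lie in $\dot H^1_V=P_c(\dot H^1_0)$, we have $\|w(t)\|_{\dot H^1_V}\approx\|w(t)\|_{\dot H^1_0}$ and $\|P_cu_0\|_{\dot H^1_V}\approx\|P_cu_0\|_{\dot H^1_0}$, hence $\mathcal E(t)\approx\|\PPP_c\vec u(t)\|_{\dot H^1_0(\Omega)\times L^2(\Omega)}$ and $\mathcal E(0)\approx\|\PPP_c(u_0,u_1)\|_{\dot H^1_0(\Omega)\times L^2(\Omega)}$. Combining with the previous display and taking the supremum over $t\in\R$ gives \eqref{eq:energy-est}. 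The only non-elementary ingredient is Lemma \ref{lem:equiv_norm}, i.e. the coercivity of the quadratic form of $L_V$ on the continuous subspace, which ultimately relies on Assumption \ref{As:zero}; everything else is the classical energy method and presents no real obstacle.
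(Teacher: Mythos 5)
Your proposal is correct and follows essentially the same route as the paper: project the equation onto the continuous subspace using the fact that $P_c$ commutes with $L_V$, compute the derivative of the $\HHH_V$-energy, pair against $\partial_t w$, and convert to the $\dot H^1_0\times L^2$ norm via Lemma \ref{lem:equiv_norm}. The only cosmetic difference is how you absorb the differential inequality: you differentiate $\sqrt{\mathcal E(t)^2+\eps}$ and let $\eps\to 0$, whereas the paper sets $\phi(t)=\max_{0\le s\le t}\|\vec u(s)\|_{\HHH_V}$, derives a quadratic inequality in $\phi(t)$, and solves it — both standard and equivalent.
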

\begin{proof}
	We denote by $\HHH_V=\PPP_c(\HHH)=\dot{H}^1_V\times P_cL^2$, with the scalar product:
	\begin{equation*}
	\left((u_0,u_1),(v_0,v_1)\right)_{\HHH_V}=\int _{\Omega}u_1\overline{v_1}+\int_{\Omega} \nabla u_0\cdot \nabla \overline{v}_0+\int_{\Omega} V u_0\overline{v}_0=(u_0,v_0)_{\dot{H}^1_V}+(u_1,v_1)_{L^2}.
	\end{equation*} 
	By Lemma \ref{lem:equiv_norm},
	\begin{equation}
	\label{eq:2}
	\forall (v_0,v_1)\in \HHH_V,\quad \|(v_0,v_1)\|_{\HHH_V}\approx \|(v_0,v_1)\|_{\HHH}.
	\end{equation} 
	
	Since  $P_c$ commutes with $L_V$, we can assume $\PPP_c(u_0,u_1)=(u_0,u_1)$, and for all $t\in \R$, $P_cf(t)=f(t)$. As as consequence, we also have $\PPP_c\vec{u}(t)=\vec{u}(t)$ for all $t$.
	
	In view of the self-adjointness of $L_V$, direct computation yields
	\begin{equation*}
	\frac{1}{2}\frac{d}{dt}\left\|(u(t),\partial_tu(t))\right\|^2_{\HHH_V}
	=\langle \partial_{t} u(t), f(t)\rangle_{L^2}
	\end{equation*}
	and integrating in time, we obtain
	\begin{equation*}
	\left\|\vec{u}(t)\right\|^2_{\HHH_V}\leq
	\left\|\vec{u}(0)\right\|^2_{\HHH_V}
	+2\int_0^t\langle \partial_{s} u(s), f(s)\rangle_{L^2}ds.
	\end{equation*}
	Letting $\displaystyle\phi(t)=\max_{0\leqslant s\leqslant t}\|\vec{u}(s)\|_{\HHH_V}$ and using  Cauchy-Schwarz's inequality, we have
	\begin{equation}
	\label{eq:energyest}
	\phi(t)^2\le \phi(0)^2+2\phi(t)\int_{0}^{t}\lVert f(s)\rVert_{L^2(\Omega)}ds,
	\end{equation}
	which yields \eqref{eq:energy-est}.
	The proof is complete.
\end{proof}
We define $\dot{H}^{-1}=\dot{H}^{-1}(\Omega)$ as the dual space of $\dot{H}^{1}_0$ with respect to the pivot space $L^2(\Omega)$.
\begin{corollary}
 \label{cor:H-1}
For all $f\in \dot{H}^{-1}$ with $P_cf=f$, there exists a unique $u\in \dot{H}^1_V$ such that $L_V u=f$ in the sense of distribution.
 \end{corollary}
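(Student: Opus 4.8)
The plan is to produce $u$ by a variational argument in the Hilbert space $\dot{H}^1_V$, and then to check that the resulting weak solution actually solves $L_Vu=f$ against all test functions. First recall that $\dot{H}^1_V=P_c(\dot{H}^1_0)$ is a closed subspace of $\dot{H}^1_0$: the operator $P_c$ is a bounded projection on $\dot{H}^1_0$, since the exponential decay and smoothness of the $Y_j$ together with Hardy's inequality give $\bigl|\int_\Omega Y_jg\bigr|\lesssim\|g\|_{\dot{H}^1_0}$ and $Y_j\in\dot{H}^1_0$. Hence $\dot{H}^1_V$ is complete, and by Lemma~\ref{lem:equiv_norm} the symmetric bilinear form $a(w,\varphi)=(w,\varphi)_{\dot{H}^1_V}$ is continuous and coercive on it. Since $f\in\dot{H}^{-1}$ and $\dot{H}^1_V\subset\dot{H}^1_0$, the map $\varphi\mapsto\langle f,\varphi\rangle_{\dot{H}^{-1},\dot{H}^1_0}$ is a bounded functional on $\dot{H}^1_V$, so by the Riesz representation theorem (equivalently Lax--Milgram) there is a unique $u\in\dot{H}^1_V$ with
\begin{equation}
\label{eq:variational-H1V}
(u,\varphi)_{\dot{H}^1_V}=\langle f,\varphi\rangle_{\dot{H}^{-1},\dot{H}^1_0}\qquad\text{for all }\varphi\in\dot{H}^1_V.
\end{equation}

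Next I would show that this $u$ solves $L_Vu=f$ in $\mathcal{D}'(\Omega)$. Fix $\varphi\in C_0^\infty(\Omega)$ and decompose $\varphi=P_c\varphi+\sum_{j=1}^J c_jY_j$ with $c_j=\mathrm{Re}\bigl(\int_\Omega Y_j\varphi\bigr)$ and $P_c\varphi\in\dot{H}^1_V$. Applying \eqref{eq:variational-H1V} with test function $P_c\varphi$, the right-hand side is $\langle f,P_c\varphi\rangle$, which equals $\langle P_cf,\varphi\rangle=\langle f,\varphi\rangle$ by the self-adjointness of $P_c$ (for the $L^2$ pairing and hence for the duality pairing) together with $P_cf=f$. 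On the left-hand side, the difference between $(u,P_c\varphi)_{\dot{H}^1_V}$ and $\int_\Omega\nabla u\cdot\nabla\overline{\varphi}+\int_\Omega Vu\,\overline{\varphi}=\langle L_Vu,\varphi\rangle_{\mathcal{D}',\mathcal{D}}$ equals $\sum_j\overline{c_j}\bigl(\int_\Omega\nabla u\cdot\nabla Y_j+\int_\Omega Vu\,Y_j\bigr)$; since $Y_j$ is real, lies in $D(L_V)$, and $L_VY_j$ is a (negative) multiple of $Y_j$, each such term is a multiple of $\langle u,Y_j\rangle_{L^2}$, which vanishes because $u\in\dot{H}^1_V=P_c(\dot{H}^1_0)$ and $P_cY_j=0$. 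Hence $\langle L_Vu,\varphi\rangle=\langle f,\varphi\rangle$ for every $\varphi\in C_0^\infty(\Omega)$, i.e. $L_Vu=f$ in $\mathcal{D}'(\Omega)$.

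For uniqueness, suppose $u\in\dot{H}^1_V$ satisfies $L_Vu=0$ in $\mathcal{D}'(\Omega)$. The computation of the previous paragraph gives $(u,P_c\varphi)_{\dot{H}^1_V}=\langle L_Vu,\varphi\rangle=0$ for all $\varphi\in C_0^\infty(\Omega)$; since $C_0^\infty(\Omega)$ is dense in $\dot{H}^1_0$ and $P_c$ maps $\dot{H}^1_0$ continuously onto $\dot{H}^1_V$, the vectors $P_c\varphi$ are dense in $\dot{H}^1_V$, so $(u,w)_{\dot{H}^1_V}=0$ for all $w\in\dot{H}^1_V$, whence $u=0$ by coercivity (Lemma~\ref{lem:equiv_norm}). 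The only step requiring genuine care is the spectral-projection bookkeeping in the second paragraph: checking that the $\dot{H}^1_V$-weak solution of \eqref{eq:variational-H1V} is a bona fide distributional solution on all of $C_0^\infty(\Omega)$, which rests on $\langle u,Y_j\rangle_{L^2}=0$ and on the self-adjointness of $P_c$; everything else is a direct application of Riesz's theorem once Lemma~\ref{lem:equiv_norm} supplies coercivity.
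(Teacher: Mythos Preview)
Your proof is correct and follows essentially the same approach as the paper: apply the Riesz representation theorem on $\dot{H}^1_V$ (using Lemma~\ref{lem:equiv_norm} for coercivity), then verify that the variational identity on $\dot{H}^1_V$ upgrades to the distributional equation on all of $C_0^\infty(\Omega)$ using $P_cf=f$. You have simply made explicit the projection bookkeeping and the uniqueness argument that the paper leaves implicit.
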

\begin{notation}
 If $f$ and $u$ are in the corollary, we will denote $u=L_V^{-1}f$.
\end{notation}
\begin{proof}
 By Riesz representation theorem and Lemma \ref{lem:equiv_norm}, for all $f\in \dot{H}^{-1}$, there exists a unique $u\in \dot{H}^1_V$ such that 
 $$ \forall \varphi\in \dot{H}^1_V,\quad (u,\varphi)_{\dot{H}^1_V}=\langle f,\varphi\rangle_{\dot{H}^{-1},\dot{H}^1_0}.$$
 If furthermore $P_cf= f$, we obtain
 $$\forall \varphi\in C^{\infty}_0(\Omega), \quad \int_{\Omega} \nabla u\cdot\nabla \varphi+\int_{\Omega}V u\varphi=\int_{\Omega} f\varphi,$$
 which concludes the proof.
\end{proof}
\begin{corollary}
\label{cor:H-1_wave}
For all $(u_0,u_1)\in H^1_0\times (L^2\cap \dot{H}^{-1})$ such that $\PPP_c(u_0,u_1)=(u_0,u_1)$, the solution $u$ of \eqref{Wave_intro} with $f=0$ and initial data $(u_0,u_1)$ satisfies
$$\sup_{t\in \R} \|\vec{u}(t)\|_{L^2\times \dot{H}^{-1}}\lesssim \|(u_0,u_1)\|_{L^2\times\dot{H}^{-1}}$$
\end{corollary}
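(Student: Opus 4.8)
The strategy is to set $w = L_V^{-1}(\partial_t u)$ using Corollary \ref{cor:H-1}, and observe that $w$ solves a wave equation at one level of regularity higher, so that the energy estimate of Corollary \ref{cor:energy-G} applied to $w$ gives exactly the claimed bound on $\vec u$. First I would check that the hypotheses make this well-defined: since $\PPP_c(u_0,u_1)=(u_0,u_1)$ and $P_c$ commutes with the flow, $\vec u(t) = \PPP_c \vec u(t)$ for all $t$; in particular $P_c \partial_t u(t) = \partial_t u(t)$. At $t=0$, $\partial_t u(0)=u_1 \in L^2\cap\dot H^{-1}$ with $P_c u_1 = u_1$, so by Corollary \ref{cor:H-1} there is a unique $w(0)=L_V^{-1}u_1 \in \dot H^1_V$. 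Moreover $\partial_t u(0) \in L^2$, and $\partial_t^2 u(0) = \Delta u_0 - V u_0 = -L_V u_0 \in \dot H^{-1}$ with $P_c(\Delta u_0 - Vu_0)=\Delta u_0 - Vu_0$ (again since $P_c$ commutes with $L_V$), so $w_1 := L_V^{-1}(-L_V u_0) = -u_0 \in \dot H^1_0$... wait, more cleanly: set $w$ to be the solution of the wave equation with data $(w_0,w_1)=(L_V^{-1}u_1, -u_0)$? Let me instead define $w$ directly as $w(t) = L_V^{-1}(\partial_t u(t))$ for each $t$ and verify it solves the wave equation.

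Concretely: for each fixed $t$, since $\partial_t u(t) \in L^2 \cap \dot H^{-1}$ — I need to know $\partial_t u(t)$ stays in $\dot H^{-1}$, which follows because $\vec u(t)$ evolves in $L^2\times \dot H^{-1}$ as a distributional solution and energy conservation for the wave equation preserves (a priori) the weaker norm; alternatively one argues by density and continuity — define $w(t)=L_V^{-1}(\partial_t u(t)) \in \dot H^1_V$. Differentiating the defining relation $L_V w(t) = \partial_t u(t)$ in $t$ (in the distributional sense) gives $L_V \partial_t w = \partial_t^2 u = \Delta u - Vu = -L_V u$, hence, since $L_V$ is injective on $\dot H^1_V$ and $u(t) = P_c u(t) \in \dot H^1_V$, we get $\partial_t w = -u$, i.e. $\partial_t w(t) = -u(t)$. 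Then $\partial_t^2 w = -\partial_t u = -L_V w$, so $\partial_t^2 w - \Delta w + Vw = 0$ with $\PPP_c(w_0,w_1)=(w_0,w_1)$, and the data $(w(0),\partial_t w(0)) = (L_V^{-1}u_1, -u_0) \in \dot H^1_V \times P_c L^2$ lies in $\HHH_V$ with $\|(w_0,w_1)\|_{\HHH_V} \approx \|L_V^{-1}u_1\|_{\dot H^1_0} + \|u_0\|_{L^2} \lesssim \|u_1\|_{\dot H^{-1}} + \|u_0\|_{L^2}$ using Lemma \ref{lem:equiv_norm} and the boundedness of $L_V^{-1}: \dot H^{-1} \to \dot H^1_V$ (Corollary \ref{cor:H-1} and Riesz).

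Now apply Corollary \ref{cor:energy-G} to $w$ (with $f=0$): $\|\vec w\|_{L^\infty(\R; \dot H^1_0 \times L^2)} \lesssim \|(w_0,w_1)\|_{\HHH}$. Since $\vec w(t) = (w(t), -u(t))$, the $L^2$-component gives $\sup_t \|u(t)\|_{L^2} \lesssim \|u_0\|_{L^2} + \|u_1\|_{\dot H^{-1}}$, and the $\dot H^1_0$-component of $w$ controls, via $L_V w = \partial_t u$ and Lemma \ref{lem:equiv_norm}, the quantity $\sup_t \|\partial_t u(t)\|_{\dot H^{-1}} = \sup_t \|L_V w(t)\|_{\dot H^{-1}} \lesssim \sup_t \|w(t)\|_{\dot H^1_V} \approx \sup_t \|w(t)\|_{\dot H^1_0} \lesssim \|u_0\|_{L^2} + \|u_1\|_{\dot H^{-1}}$. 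Combining the two gives $\sup_{t}\|\vec u(t)\|_{L^2\times \dot H^{-1}} \lesssim \|(u_0,u_1)\|_{L^2\times\dot H^{-1}}$, as claimed.

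The main obstacle is the regularity bookkeeping: one must justify that $w(t)=L_V^{-1}(\partial_t u(t))$ is differentiable in $t$ with $\partial_t w = -u$ and that all the manipulations ($L_V$ commuting with $\partial_t$, injectivity of $L_V$ on $\dot H^1_V$, the fact that $\partial_t u(t)$ remains in $\dot H^{-1}$ for all $t$ a priori) are legitimate. The cleanest route is to prove the estimate first for smooth, compactly supported, $P_c$-projected data — where $w$ is a genuine finite-energy solution of the wave equation and everything is classical — obtain the bound $\sup_t \|\vec u(t)\|_{L^2\times\dot H^{-1}} \lesssim \|(u_0,u_1)\|_{L^2\times\dot H^{-1}}$ in that case, and then pass to the limit by density of such data in $\{(u_0,u_1)\in H^1_0\times(L^2\cap\dot H^{-1}): \PPP_c(u_0,u_1)=(u_0,u_1)\}$ for the $L^2\times\dot H^{-1}$ norm, using continuity of the wave flow. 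Uniqueness is immediate from linearity and the energy estimate of Corollary \ref{cor:energy-G}.
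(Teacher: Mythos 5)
Your proposal is correct and is essentially the paper's argument: the paper likewise introduces the auxiliary solution $v$ of the homogeneous wave equation with initial data $(v_0,v_1)=(L_V^{-1}u_1,\,u_0)$ (up to a sign), appeals to uniqueness in $\dot H^1_0\times L^2$ to identify $\partial_t v$ with $\pm u$ and $L_V v$ with $\pm\partial_t u$, and concludes from Corollary~\ref{cor:energy-G}. The one presentational difference is that you initially define $w(t)=L_V^{-1}(\partial_t u(t))$ pointwise in $t$, which as you note requires knowing a priori that $\partial_t u(t)\in\dot H^{-1}$; the paper's route of defining $v$ by its $\dot H^1_0\times L^2$ data and invoking uniqueness sidesteps this circularity directly, without needing the density argument you propose as a fallback.
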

\begin{proof}
 Consider the solution $v$ of \eqref{Wave_intro} with right-hand side $f=0$, and initial data $(v_0,v_1)$ defined by 
 $$ v_1=u_0,\quad v_0=L_V^{-1}u_1.$$
 By uniqueness in the Cauchy problem in $\dot{H}^1_0\times L^2$, we see that $\partial_tv=u$, $L_Vv=\partial_tu$, and the conclusion of the corollary follows from Corollary \ref{cor:energy-G}.  
\end{proof}
By Corollary \ref{cor:H-1_wave}, we can extend the wave equation \eqref{Wave_intro} to initial data $(u_0,u_1)\in L^2\times \dot{H}^{-1}$, at least when $f$ is identically $0$.
\subsection{$L^2$-integrability of the weighted energy}

\label{sub:L2wave}

In this section, we prove Theorem \ref{T:L2} for the wave equation, i.e. the inequality \eqref{eq:lse}.


	The argument is inspired by the one in \cite{Burq03}, where the inequality with $V=0$ and a compactly supported weight is proved. However we found a small error in the proof of \cite{Burq03} in the homogeneous case (i.e. when $f=0$ and $(u_0,u_1)$ is not zero). Using the notations of this article, p.1678 the norm $\|\mathcal{A}^s \cdot\|_{L^2}$ is not adapted to the space $\dot{\mathcal{H}}^s$. This also affects the formula giving the adjoint in (2.6) and thus the whole $T T^*$-argument. 
	The $TT^*$ argument can be fixed \cite{BurqPC}. However  we were not able do it in a simple way, and propose a different elementary proof for this case (see Step 3 below) that uses finite speed of propagation and the corresponding inequality for the free wave equation (without obstacle). We note that this proof also works in the case $V=0$, and thus provides a complete proof of Theorem 3 of \cite{Burq03} in the case $\gamma=0$. An alternative approach would be to use the generalization of the theory of Kato's smoothing to wave equation developed in \cite{DAncona14}.
	
	The proof is divided into three steps. In all the proof, we fix $\nu_1,\nu_2>1/2$ such that $\nu_1+\nu_2>2$.
	\medskip

	\noindent\emph{ Step 1. Resolvent estimates.}
	We start by proving that there exist $\eps_0>0$, $C>0$ such that for $0<|\im \lambda|\leq \eps_0/(1+|\re\lambda|)$. Again, in the sequel of the proof, I systematically changed $\nu$ into $\nu_1$ or $\nu_2$ without indicating the changes.
	\begin{align}
	\label{TD21}
	\| \langle x\rangle^{-\nu_1} (L_V-\lambda^2)^{-1}\langle x\rangle^{-\nu_2}\|_{L^2(\Omega)\to L^2(\Omega)}& \leq C/(1+|\lambda|)\\
	\label{TD22}
	\|\langle x\rangle^{-\nu_1}(L_V-\lambda^2)^{-1}\langle x\rangle^{-\nu_2}\|_{L^2(\Omega)\to \dot{ H}^1_0(\Omega)}&\leq C.
	\end{align}
	Note that the condition on $\lambda$ implies $\re (\lambda^2)\geq -\eps_0^2$, $\im (\lambda^2)\leq 2\eps_0$. Thus \eqref{TD21} follows immediately from Proposition \ref{pp:alpND} taking $\eps_0>0$ small enough.
	
	We next prove \eqref{TD22}.
	To this end, we let $F\in L^2(\Omega)$ and $U=(L_V-\lambda^2)^{-1}\langle x\rangle^{-\nu_2} F$, so that 
	\begin{equation}
	 \label{TD21'}
	 \left\|\langle x\rangle^{-\nu_1} U\right\|_{L^2(\Omega)}\lesssim (1+|\lambda|)^{-1}\|F\|_{L^2(\Omega)}
	\end{equation} 
	by \eqref{TD21}.
	Then, integrating by parts, we have
	\begin{align*}
	\int_{\Omega}\langle x\rangle^{-\nu_2}F(x)\langle x\rangle^{-2\nu_1}\overline{U(x)}dx
	=&\int_{\Omega}-\Delta U\;\overline{U}\,\langle x\rangle^{-2\nu_1}dx+\int_{\Omega}(V-\lambda^2)\langle x\rangle^{-2\nu_1}\lvert U\rvert^2dx\\
	=&\int_{\Omega}\lvert\nabla U\rvert^2\langle x\rangle^{-2\nu_1}dx-2\nu_1\int_{\Omega}\langle x\rangle^{-2\nu_1-1}\frac{x}{\lvert x\rvert}\cdot \nabla U\,\overline{U}\,dx\\
	&+\int_{\Omega}(V-\lambda^2)\langle x\rangle^{-2\nu_1}\lvert U\rvert^2dx.
	\end{align*}
	By using Cauchy-Schwarz's inequality, we obtain
	\begin{align*}
	\int \lvert\nabla U\rvert^2\langle x\rangle^{-2\nu_1}dx\le \lVert\langle x\rangle^{-\nu_2}F \rVert_{L^2}\lVert\langle x\rangle^{-2\nu_1}U \rVert_{L^2}&+\bigl(\|V\|_{L^\infty(\Omega)}+|\lambda|^2\bigr)\lVert\langle x\rangle^{-\nu_1} U \rVert_{L^2}^2\\
	&+2\nu_1\lVert \langle x\rangle^{-\nu_1}\nabla U\rVert_{L^2}\lVert \langle x\rangle^{-\nu_1} U\rVert_{L^2}.
	\end{align*}
	In view of \eqref{TD21'} and the elementary estimate $2\alpha\beta\le\alpha^2+\beta^2$, we have 
	\begin{equation}
	\lVert \langle x\rangle^{-\nu_1}  U\rVert_{\dot{H}^1(\Omega)}+
	\lVert \langle x\rangle^{-\nu_1} \nabla U\rVert_{L^2(\Omega)}\lesssim \lVert F\rVert_{L^2(\Omega)}.
	\end{equation}
	Therefore, we obtain \eqref{TD22}.

	\smallskip
	
	\noindent\emph{Step 2. The inhomogeneous equation.} We next prove \eqref{eq:lse} when $(u_0,u_1)=(0,0)$. By density, we can assume that $f$ has compact support. We denote by $v(t)=P_cu(t)$.
	Using Corollary \ref{cor:energy-G}, we deduce that $\|\vec{v}(t)\|_{\HHH}$ is uniformly bounded. 
	Writing $f=\indic_{\{t\leq 0\}} f+\indic_{\{t>0\}}f$, treating separately the two contributions and using time symmetry, we can assume that $t\geq 0$ on the support of $f$, and thus (since $(u_0,u_1)=(0,0)$), on the support of $v$.
	
	Let $v_{\eps}(t)=e^{-\eps t} v(t)$, $f_{\eps}(t)=e^{-\eps t} f(t)$, which both belong to $(L^1\cap L^2)(\R,L^2(\Omega))$. The equation \eqref{Wave_intro} writes
	$$ \partial_t^2v_{\eps}+2\eps\partial_tv_{\eps} +\eps^2v_{\eps}+L_Vv_{\eps}=P_c(f_{\eps}).$$
	Denoting by $\widehat{\phi}$ the Fourier transform in the time variable of a $L^2$-valued tempered distribution $\phi$, we obtain
	$$ \left(-\tau^2+2 i\eps\tau+\eps^2+L_V\right)\widehat{v}_{\eps}(\tau)=P_c\left( \widehat{f}_{\eps}(\tau) \right),$$
	and thus 
	\begin{equation}
	\label{TD50}
	\widehat{v}_{\eps}(\tau)=\left(-\tau^2+2 i\eps\tau+\eps^2+L_V\right)^{-1} P_c\left( \widehat{f}_{\eps}(\tau) \right).
	\end{equation} 
	We have 
	\begin{multline*}
	\la x\ra^{-\nu_1}\left( -\tau^2+2 i\eps\tau+\eps^2+L_V\right)^{-1} P_c \la x\ra^{-\nu_2}\\
	=\la x\ra^{-\nu_1}\left(  -\tau^2+2 i\eps\tau+\eps^2+L_V  \right)^{-1} \la x\ra^{-\nu_2}\left( 1+\la x\ra^{\nu_2}\left( P_c-1 \right)\la x\ra^{-\nu_2} \right),
	\end{multline*}
	Note that $\tau^2-\eps^2-2i\eps \tau=(\tau-i\eps)^2$.
	Since $(P_c-1)$ is the orthogonal projection on a finite dimensional space of smooth, exponentially decaying functions, we deduce from Step 1 treating $\tau>0$ and $\tau <0$ separately that 
	$$ |\eps|\leq \frac{c}{1+|\tau|}\Longrightarrow \left\|\la x\ra^{-\nu_1}\left(-\tau^2+2 i\eps\tau+\eps^2+L_V  \right)^{-1} \PPP_c \la x\ra^{-\nu_2}\right\|_{L^2\to H^1_0} \leq C$$
	and
	$$ |\eps|\leq \frac{c}{1+|\tau|}\Longrightarrow |\tau|\left\|\la x\ra^{-\nu_1}\left(-\tau^2+2 i\eps\tau+\eps^2+L_V  \right)^{-1} \PPP_c \la x\ra^{-\nu_2}\right\|_{L^2\to L^2} \leq C,$$
	for a small constant $c>0$.
	By \eqref{TD50}, the Plancherel theorem, and the fact that $e^{-\eps t}\leq 1$ on the support of $f$, we obtain that for all $M$, if $\eps\leq c/(1+M)$, 
	\begin{multline*}
	\left\| \la x\ra^{-\nu_1} \widehat{v}_{\eps} \right\|_{L^2\left( (-M,M),H^1_0\right)}+\left\| \tau \la x\ra^{-\nu_1} \widehat{v}_{\eps} \right\|_{L^2\left( (-M,M),L^2\right)}\\
	\leq C\left\|\la x\ra^{\nu_2}\widehat{f}_{\eps}\right\|_{L^2\left((-M,M)\times \Omega \right)}\leq C\|\la x\ra^{\nu_2}f\|_{L^2(\R\times \Omega)}.		 
	\end{multline*}
	Letting $\eps\to 0$, we deduce that 
	$$\left\| \la x\ra^{-\nu_1} \widehat{v} \right\|_{L^2\left( (-M,M),H^1_0\right)}+\left\| \tau \la x\ra^{-\nu_1} \widehat{v}\right\|_{L^2\left( (-M,M),L^2\right)}\leq C\left\|\la x\ra^{\nu_2}f\right\|_{L^2(\R\times \Omega)}.$$
	Letting $M\to\infty$, and using the Plancherel theorem again, we deduce that \eqref{eq:lse} holds when $(u_0,u_1)=(0,0)$.
	
	\smallskip
	
	\noindent\emph{Step 3. The homogeneous equation.} In this step we treat the case where $f=0$, $(u_0,u_1)\in \HHH$. We let $\nu_1>1/2$ such that $\nu_1+\alpha>7/2$. We can thus choose $\nu_2>1/2$ such that $2-\nu_1<\nu_2<\alpha-3/2$.
	
	We denote by $v(t)=P_cu(t)$, $(v_0,v_1)=\PPP_c(u_0,u_1)$, and $(\underline{v}_0,\underline{v}_1)$ the extension of $(v_0,v_1)$ to $\R^N$ by $(\uv_0,\uv_1)(x)=0$ for $x\in K$.  We let $v_F$ be the solution of the free wave equation
	\begin{equation}
	\label{FW}
	\left\{
	\begin{aligned}
	(\partial_t^2-\Delta)v_F&=0,\quad (t,x)\in \R\times \R^N\\
	\vec{v}_F(0)&=(\uv_0,\uv_1).
	\end{aligned}\right.
	\end{equation} 
	We fix $\chi\in C_0^{\infty}(\R^N)$ such that $\chi=1$, close to $K$. We let $w=v-(1-\chi) v_F$. Note that $w=v$ close to $K$, so that $w$ satisfies Dirichlet boundary conditions on $\partial \Omega$. We have:
	\begin{equation*}
	\left\{
	\begin{aligned}
	(\partial_t^2+L_V)w&=(\chi-1)Vv_F-(\Delta \chi)v_F-2\nabla \chi\cdot\nabla v_F,\quad (t,x)\in \R\times \Omega\\
	\vec{w}(0)&=\chi(v_0,v_1).
	\end{aligned}\right.
	\end{equation*} 
	We write $w=a+b$, where
	\begin{equation}
	\label{inh40}
	\left\{
	\begin{aligned}
	(\partial_t^2+L_V)a&=(\chi-1)Vv_F-(\Delta \chi)v_F-2\nabla \chi\cdot\nabla v_F,\quad (t,x)\in \R\times \Omega\\
	\vec{a}(0)&=(0,0).
	\end{aligned}\right.
	\end{equation}
	and
	\begin{equation}
	\label{inh41}
	\left\{
	\begin{aligned}
	(\partial_t^2+L_V)b&=0,\quad (t,x)\in \R\times \Omega\\
	\vec{v}(0)&=\chi(v_0,v_1).
	\end{aligned}\right.
	\end{equation} 
	Since $\nu_1>1/2$ and $\nu_2-\alpha<-3/2$, we have (see the appendix  \ref{A:free_weighted})
	\begin{equation}
	\label{inh42}
	\left\| \la x\ra^{-\nu_1}\vec{v}_F\right\|_{L^2(\R,\dot{H}^1\times L^2)}+
	\left\| \la x\ra^{\nu_2-\alpha}\vec{v}_F\right\|_{L^2(\R,H^1\times L^2)}\lesssim \left\|(v_0,v_1)\right\|_{\HHH}.
	\end{equation}
	Using the explicit form of $\PPP_c$, one see that it implies
	\begin{multline}
	\label{inh43}
\left\| \la x\ra^{-\nu_1}\PPP_c(1-\chi)\vec{v}_F\right\|_{L^2(\R,\dot{H}^1\times L^2)}\\
+\left\| \la x\ra^{\nu_2-\alpha}\PPP_c(1-\chi)\vec{v}_F\right\|_{L^2(\R,H^1\times L^2)}\lesssim \left\|(v_0,v_1)\right\|_{\HHH}.
	\end{multline}
	Let $\psi\in C_0^{\infty}(\R^N)$ such that $\psi=1$ on the support of $\chi$. By the equation \eqref{inh40} and Step 2,
	\begin{multline*}
	\left\| \la x\ra^{-\nu_1} \PPP_c\vec{a}\right\|_{L^2(\R,\HHH)}\lesssim \left\|\psi\left(|\nabla v_F|+|v_F| \right)\right\|_{L^2(\R\times \Omega)}+ \left\| \la x\ra^{\nu_2} V v_F\right\|_{L^2(\R\times\Omega)}\\
	\lesssim  \left\| \langle x\rangle^{\nu_2-\alpha} v_F\right\|_{L^2(\R\times \Omega)}+\big\|\psi |\nabla v_F|\big\|_{L^2(\R\times \Omega)},
	\end{multline*}
	By \eqref{inh42},
	\begin{equation}
	\label{inh50} 
	\left\|\la x\ra^{-\nu_1}\PPP_c \vec{a}\right\|_{L^2(\R,\HHH)}\lesssim \|(v_0,v_1)\|_{\HHH}.
	\end{equation} 
	Next, we notice that by Corollary \ref{cor:energy-G},
	\begin{equation}
	\label{inh50'}
	\left\|\la x\ra^{-\nu_1}\PPP_c \vec{b}\right\|_{L^2\left((-2,2),\HHH\right)}\lesssim \|(v_0,v_1)\|_{\HHH}. 
	\end{equation} 
	Letting $\varphi\in C^{\infty}(\R)$ with $\varphi(t)=0$ if $|t|\leq 1$, and $\varphi(t)=1$ if $|t|\geq 2$, we obtain
	\begin{equation}
	\label{inh51}
	(\partial_t^2+L_V)(\varphi(t)b)=\varphi''(t)b+2\varphi'(t)b,\quad \overrightarrow{\varphi(t)b}_{\restriction t=0}=(0,0).
	\end{equation} 
	By finite speed of propagation, the support of the right hand side of \eqref{inh51} is included in the compact set $\left\{x\in \Omega,\; \exists y\in \supp \chi\text{ s.t. }|x-y|\leq 2\right\}$. Thus by Step 2,
	\begin{equation*}
	\left\|\varphi(t)\la x\ra^{-\nu_1}\PPP_c \vec{b}\right\|_{L^2\left(\R,\HHH\right)}\lesssim \|\PPP_c(b,\partial_tb)\|_{L^2((-2,2),\HHH)}\lesssim \|(v_0,v_1)\|_{\HHH},
	\end{equation*} 
	where we have used the conservation of the energy again. Combining with \eqref{inh50'}, we deduce
	\begin{equation}
	\label{inh60}
	\left\|\la x\ra^{-\nu_1}\PPP_c \vec{b}\right\|_{L^2\left(\R,\HHH\right)}\lesssim \|(v_0,v_1)\|_{\HHH}.
	\end{equation} 
	Combining \eqref{inh43}, \eqref{inh50} and \eqref{inh60}, we deduce \eqref{eq:lse} in the case $f=0$. 
\qed

With a proof similar to the one of Corollary \ref{cor:H-1_wave}, we deduce:
\begin{corollary}
\label{cor:integrability_H-1}
Let $\nu_1>1/2$ such that $\nu_1+\alpha>7/2$.
For all $(u_0,u_1)\in H^1_0\times (L^2\cap \dot{H}^{-1})$ such that $\PPP_c(u_0,u_1)=(u_0,u_1)$, the solution $u$ of \eqref{Wave_intro} with $f=0$ and initial data $(u_0,u_1)$ satisfies
$$ \|\langle x \rangle^{-\nu_1}u(t)\|_{L^2(\R\times \Omega)}\lesssim \|(u_0,u_1)\|_{L^2\times\dot{H}^{-1}}.$$
\end{corollary}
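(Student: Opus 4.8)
The plan is to mimic the proof of Corollary~\ref{cor:H-1_wave}, replacing the energy estimate \eqref{eq:energy-est} by the homogeneous weighted $L^2$ bound \eqref{eq:lse}. Given $(u_0,u_1)\in H^1_0\times(L^2\cap\dot H^{-1})$ with $\PPP_c(u_0,u_1)=(u_0,u_1)$, I would introduce the auxiliary solution $v$ of \eqref{Wave_intro} with $f=0$ and initial data
\[
v_0=-L_V^{-1}u_1,\qquad v_1=u_0 .
\]
Here $v_0$ is well defined by Corollary~\ref{cor:H-1} because $u_1\in\dot H^{-1}$ and $P_cu_1=u_1$; writing $-\Delta v_0=-u_1-Vv_0\in L^2(\Omega)$ and invoking elliptic regularity for the Dirichlet Laplacian, one gets $v_0\in H^2(\Omega)\cap H^1_0(\Omega)=D(L_V)$. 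Since moreover $P_cv_0=v_0$ (as $v_0\in\dot H^1_V$) and $P_cv_1=P_cu_0=u_0$, we have $\PPP_c(v_0,v_1)=(v_0,v_1)$.

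The key identity to establish is $\partial_t v=u$. Because $(v_0,v_1)\in D(L_V)\times H^1_0(\Omega)$, the solution $v$ lies in $C(\R;D(L_V))\cap C^1(\R;H^1_0)\cap C^2(\R;L^2)$, so $\partial_t v$ is itself a finite-energy solution of \eqref{Wave_intro} with $f=0$, whose initial data is
\[
\bigl((\partial_tv)(0),\,\partial_t^2v(0)\bigr)=\bigl(v_1,\,-L_Vv_0\bigr)=(u_0,u_1).
\]
By uniqueness of the Cauchy problem for \eqref{Wave_intro} in $\dot H^1_0(\Omega)\times L^2(\Omega)$, this forces $\partial_t v=u$. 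I expect this regularity/uniqueness bookkeeping — precisely the point where Corollary~\ref{cor:H-1_wave} appeals to well-posedness — to be the only mildly delicate step; everything else is a direct application of results already proved.

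It then remains to apply \eqref{eq:lse} with $f=0$ to $v$. The hypothesis $\nu_1+\alpha>7/2$ is exactly the one under which the homogeneous case of \eqref{eq:lse} is valid, and since $\PPP_c(v_0,v_1)=(v_0,v_1)$ (and $P_c$ commutes with the flow), I obtain
\[
\|\langle x\rangle^{-\nu_1}u\|_{L^2(\R\times\Omega)}=\|\langle x\rangle^{-\nu_1}\partial_tv\|_{L^2(\R\times\Omega)}\le\bigl\|\langle x\rangle^{-\nu_1}\vec v\bigr\|_{L^2_t(\R;\dot H^1_0\times L^2)}\lesssim\|(v_0,v_1)\|_{\HHH}.
\]
Finally I would bound $\|(v_0,v_1)\|_{\HHH}$: one has $\|v_1\|_{L^2}=\|u_0\|_{L^2}$, while the Riesz-representation construction of $L_V^{-1}$ in Corollary~\ref{cor:H-1} together with the norm equivalence of Lemma~\ref{lem:equiv_norm} gives $\|v_0\|_{\dot H^1_0}\approx\|v_0\|_{\dot H^1_V}\lesssim\|u_1\|_{\dot H^{-1}}$. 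Hence $\|(v_0,v_1)\|_{\HHH}\lesssim\|(u_0,u_1)\|_{L^2\times\dot H^{-1}}$, which yields the stated estimate.
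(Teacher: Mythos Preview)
Your proposal is correct and follows exactly the approach the paper intends: the paper's proof consists of the single sentence ``With a proof similar to the one of Corollary~\ref{cor:H-1_wave}'', and you have faithfully filled in those details, introducing the auxiliary solution $v$ with $(v_0,v_1)=(-L_V^{-1}u_1,u_0)$, identifying $\partial_t v=u$, and then applying the homogeneous case of \eqref{eq:lse} in place of the energy estimate used in Corollary~\ref{cor:H-1_wave}. Your additional bookkeeping (elliptic regularity for $v_0\in D(L_V)$, the bound $\|v_0\|_{\dot H^1_0}\lesssim\|u_1\|_{\dot H^{-1}}$ via Riesz representation and Lemma~\ref{lem:equiv_norm}) is all correct and makes the argument self-contained.
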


\subsection{Proof of Theorem \ref{T:wave}}
\label{sub:mainproof}
We will use a Lemma of Christ and Kiselev \cite{ChKi01}, that we recall in Appendix \ref{A:ChristKiselev}. The idea to use this result to prove Strichartz estimates goes back to \cite{SmithSogge00}, following an idea of T.~Tao. 

In all the proof, $(q,r)$ denotes any non-endpoint admissible pair satisfying Assumption \ref{As:pqwave}.
From \cite{SmithSogge00,Burq03}, we know that the Strichartz estimates are valid when $V=0$, that is, if
\begin{equation}
\label{eqv_free}
\partial_{t}^2 v-\Delta_D v=f,\quad \vec{v}(0)=(v,\partial_tv)_{\restriction t=0}\in \HHH,\end{equation} 
then 
$$\|v\|_{L^qL^r}\lesssim \|f\|_{L^1L^2}+\|\vec{v}(0)\|_{\HHH}.$$

\smallskip

\noindent\emph{Step 1. Mixed estimate without potential.} We first observe that if $\nu>1/2$, then for any solution of \eqref{eqv_free} with $\la x \ra^{\nu}f\in L^2(\R\times\Omega)$, then
\begin{equation}
\label{mixed_Delta_D}
\|v\|_{L^qL^r}\lesssim \left\|\la x \ra^{\nu} f\right\|_{L^2(\R\times \Omega)}+\|\vec{v}(0)\|_{\HHH}. 
\end{equation}
Indeed, the case $f=0$ is already known. We can thus assume $\vec{v}(0)=(0,0)$. Denote by $\Pi_0$ the projection on the first component in $\HHH$, $\Pi_0(u_0,u_1)=u_0$, and by
$\mathcal{A}_0=\begin{pmatrix} 
0 & -i \\
-i\Delta_D & 0
\end{pmatrix}$. Then,
$$v=\Pi_0\int_0^{t}e^{i(t-s)\AAA_0}\left( 0,\la x\ra^{-\nu} \la x\ra^{\nu}f(s) \right)ds.$$
Using Corollary \ref{cor:integrability_H-1} with $V=0$ and a duality argument, we obtain that the operator 
$$ F\mapsto \int_0^{\infty} e^{-is\AAA_0}\left( 0,\la x\ra^{-\nu}F(s) \right)ds$$ 
is a bounded operator from $L^2(\R\times\Omega)$ to $\HHH$. Combining with Strichartz estimates for \eqref{eqv_free}, we obtain that
$$ F\mapsto \Pi_0\int_0^{\infty} e^{i(t-s)\AAA_0}\left( 0,\la x\ra^{-\nu}F(s) \right)ds$$
is bounded from $L^2(\R\times \Omega)$ to $L^qL^r$. By Christ and Kiselev Lemma, the operator 
$$ F\mapsto \Pi_0\int_0^{t} e^{i(t-s)\AAA_0}\left( 0,\la x\ra^{-\nu}F(s) \right)ds$$
is also bounded from $L^2(\R\times \Omega)$ to $L^qL^r$, which concludes this step. 

\smallskip

\noindent\emph{Step 2: the homogeneous case.} We next consider a solution $u$ of \eqref{Wave_intro}, \eqref{Wave_ID} with $f=0$. We let $v=P_cu$. Thus
$$ (\partial_t^2-\Delta_D)v=-Vv,\quad \vec{v}(0)\in \PPP_c\HHH.$$
By \eqref{eq:lse} in Theorem \ref{T:L2}, for any $\nu_1>3/2$, $\la x\ra^{-\nu_1}v\in L^2(\R\times \Omega)$, with $\|\la x\ra^{-\nu_1}v\|_{L^2(\R\times \Omega)}\lesssim \|\vec{v}(0)\|_{\HHH}$. By the assumption \eqref{decayV} on $V$ we deduce, choosing $\nu>1/2$ close enough to $1/2$
$$ \la x\ra^{\nu} V v\in L^2(\R\times \Omega),\;
\|\la x\ra^{\nu}V v\|_{L^2(\R\times \Omega)}\lesssim \|\vec{v}(0)\|_{\HHH}.$$
By Step 1,
$$\|v\|_{L^q(\R;L^r(\Omega))} \lesssim \|\vec{v}(0)\|_{\HHH}.$$

\smallskip

\noindent\emph{Step 3. Inhomogeneous case.} We next consider the case $f\neq 0$, $(u_0,u_1)=0$. Let
\begin{equation}
\label{defAV}
\mathcal{A}_V=\begin{pmatrix} 
0 & -i \\
iL_V & 0
\end{pmatrix}.
\end{equation} 
Let $T>0$.
By Corollary \ref{cor:H-1_wave}, the operator $(u_0,u_1)\mapsto \Pi_0 e^{it\AAA_V}\PPP_c(u_0,u_1)$ is bounded from $L^2\times \dot{H}^{-1}$ to $C^0([0,T],L^2)$. By duality,
$$ f\mapsto\int_0^T \PPP_c\left(e^{-is\AAA_V} (0,f)\right)ds$$
is bounded from $L^1([0,T],L^2(\Omega))$ to $\dot{H}^{1}_0\times L^2$. By Step 2, 
$$f \mapsto \Pi_0\int_0^{T}\PPP_c\left(e^{i(t-s)\AAA_V} (0,f)\right)ds $$
is bounded from $L^1([0,T],L^2)$ to $L^q([0,T],L^r)$. By Christ and Kiselev Lemma (see Appendix \ref{A:ChristKiselev}), the operator
$$f \mapsto \Pi_0\int_0^{t}\PPP_c\left(e^{i(t-s)\AAA_V} (0,f)\right)ds $$
is bounded from $L^1([0,T],L^2)$ to $L^q([0,T],L^r)$. Since all the bounds in this argument are uniform with respect to $T$, we obtain the conclusion of Theorem \ref{T:wave} in this case also, which concludes the proof. \qed

\section{Dispersive estimates for the Schr\"odinger equation}
\label{S:LS}
In this section we prove Theorem \ref{T:LS}. Once the resolvent estimates of Proposition \ref{pp:alpND} are known, 
the proof is an adaptation of classical arguments (see e.g. Section 2 of \cite{BuGeTz04a} and \cite{RodnianskiSchlag04}). We will consider a slightly more general setting than the one of Theorem \ref{T:LS}, allowing for Strichartz inequalities with loss, or at different level of regularity than $L^2$. More precisely, we will replace Assumption \ref{As:pqLS} by:
\begin{assumption}
\label{As:pqLS'}
The parameters $(p,q,\delta)\in (2,\infty]\times [2,\infty)\times [0,1/2)$ satisfy
\begin{equation}
 \label{admissible_LS_delta}
 \frac{N}{2}-\delta\leq \frac{2}{p}+\frac{N}{q}\leq \frac{N}{2}.
\end{equation}
Furthermore,  there exist a constant  $C>0$, and a function $\chi\in C_0^{\infty}(\R^N)$ such that $\chi=1$ in a neighborhood of $K$ and for any $\varphi_0\in H_D^{\delta_1}$,
\begin{equation}
\label{local_Strichartz_LS'}
\|\chi e^{it\Delta_D}\varphi_0\|_{L^{p}([0,1],L^{q})}\leq C\|\varphi_0\|_{H_D^{\delta}}.
\end{equation}
\end{assumption}
\begin{remark}
Note that Assumption \ref{As:pqLS'} covers Assumption \ref{As:pqLS} which corresponds to the case $\delta=0$. Also, if $(p,q,\delta)$ satisfy \eqref{admissible_LS_delta}, it follows from the classical Strichartz estimate and the Sobolev inequality $\|\varphi\|_{L^q(\R^N)}\lesssim \|(1-\Delta)^{\delta/2}\varphi\|_{L^r(\R^N)}$, where $\frac{N}{q}-\delta=\frac{N}{r}$, that
\begin{equation}
\label{subcritical_Strichartz_RN}
\|e^{it\Delta}\varphi_0\|_{L^{p}(\R,L^{q}(\R^N))}\leq C\|\varphi_0\|_{H^{\delta}(\R^N)}.
\end{equation}
The case where $\frac{N}{2}-\delta<\frac{2}{p}+\frac{N}{q}$ in Assumption \ref{As:pqLS}, characterizes a loss of derivative compared to the Strichartz estimate \eqref{subcritical_Strichartz_RN} on $\R^N$. It is satisfied in particular for any smooth nontrapping obstacle, in dimension $N=3$, for any $(p,q)$ such that $\delta>\frac{1}{3p}$ and $\frac{2}{p}+\frac{3}{q}=\frac{3}{2}$. See \cite{BlSmSo08}, and Remark 1 and Proposition 2.1 in \cite{Anton08}.

Assumption \ref{As:pqLS} is also true without loss of derivative, i.e. $\frac{N}{2}-\delta=\frac{2}{p}+\frac{N}{q}$, for any smooth nontrapping obstacle in any dimension $N\geq 3$, provided that the additional condition $\frac{1}{p}+\frac{1}{q}\leq \frac{1}{2}$ holds: see \cite{BlSmSo12}.
\end{remark}
We will prove the following more general version of Theorem \ref{T:LS}:
\begin{theorem}
 	Let $\Omega$ and $V$ satisfy Assumptions \ref{As:K}, \ref{As:V}, \ref{As:zero}. Let $(p_1,q_1,\delta_1)$, $(p_2,q_2,\delta_2)$ such that Assumption \ref{As:pqLS'} holds for both triplets. Then, there is $C>0$ such that for all solutions $\varphi$ of \eqref{LS_intro}, \eqref{LS_ID} with $(1-\Delta_D)^{(\delta_1+\delta_2)/2}\psi\in  L^{p_2'}L^{q_2'}$, one has
 \label{T:LS'}
	\begin{equation}
	\label{eq:strichartzLS'}
	\|P_c \varphi\|_{L^{p_1}L^{q_1}}\le C\Bigl(\|P_c\varphi_0\|_{H_D^{\delta_1}}+\left\|(1-\Delta_D)^{(\delta_1+\delta_2)/2}P_c \psi\right\|_{L^{p_2'}L^{q_2'}}\Bigr).
	\end{equation}
\end{theorem}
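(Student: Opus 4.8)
The plan is to combine the uniform resolvent bound of Proposition~\ref{pp:alpND} — equivalently, the smoothing estimate \eqref{smoothing} of Theorem~\ref{T:L2} — with the local-in-time Strichartz estimate \eqref{local_Strichartz_LS'}, following the scheme of \cite{BuGeTz04a} to pass from a local to a global estimate outside the obstacle, and of \cite{RodnianskiSchlag04} to incorporate the potential. First I would reduce to the continuous spectral subspace: since $P_c$ commutes with $e^{-itL_V}$, replacing $(\varphi_0,\psi)$ by $(P_c\varphi_0,P_c\psi)$ replaces the solution $\varphi$ by $P_c\varphi$, so one may assume $\varphi=P_c\varphi$, $\varphi_0=P_c\varphi_0$ and $\psi=P_c\psi$, and then the two norms on the right of \eqref{eq:strichartzLS'} are the $H^{\delta_1}_D$, resp.\ $L^{p_2'}L^{q_2'}$, norms of $\varphi_0$, resp.\ $(1-\Delta_D)^{(\delta_1+\delta_2)/2}\psi$. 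Next I would rewrite \eqref{LS_intro} as $i\partial_t\varphi+\Delta_D\varphi=\psi+V\varphi$, using the \emph{free} Dirichlet operator as the reference, so that by Duhamel's formula
\[
\varphi(t)=e^{it\Delta_D}\varphi_0-i\int_0^t e^{i(t-s)\Delta_D}\bigl(\psi(s)+V\varphi(s)\bigr)\,ds.
\]
It then suffices to (a) prove the \emph{global} Strichartz estimate $\|e^{it\Delta_D}h\|_{L^pL^q(\R\times\Omega)}\lesssim\|h\|_{H^{\delta}_D}$ for every triplet $(p,q,\delta)$ satisfying Assumption~\ref{As:pqLS'}, together with the associated inhomogeneous estimate and with a ``mixed'' estimate bounding $\|\int_0^t e^{i(t-s)\Delta_D}g\,ds\|_{L^{p_1}L^{q_1}}$ by the norm of $g$ in a weighted $L^2_t$-space at negative spatial Sobolev regularity; and (b) use (a) together with Theorem~\ref{T:L2} to absorb the source $V\varphi$.

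For (a) — which is exactly the $V=0$ case of the theorem, explaining the equivalence announced in the footnote after Remark~\ref{R:pqLS} — I would use the cutoff decomposition of \cite{BuGeTz04a}: fix $\chi\in C_0^\infty(\R^N)$ equal to $1$ on a large neighbourhood of $K$ and write $e^{it\Delta_D}h=\chi u+(1-\chi)u$ with $u=e^{it\Delta_D}h$. Away from the obstacle, $(1-\chi)u$, extended by $0$ into $K$, solves the free Euclidean Schr\"odinger equation with data $(1-\chi)h\in H^{\delta}(\R^N)$ and source $-[\Delta,\chi]u$, supported in the fixed compact set $\supp\nabla\chi\subset\overline\Omega$; the $V=0$ case of \eqref{smoothing} (a consequence of Proposition~\ref{pp:VzeroND}) controls this source in a weighted $L^2_tH^{-1/2}$ norm, and the Euclidean Strichartz inequality \eqref{subcritical_Strichartz_RN}, combined with the dual smoothing estimate and a Christ–Kiselev argument (Appendix~\ref{A:ChristKiselev}) for the Duhamel term, bounds this piece. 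Near the obstacle, $\chi u$ is treated by applying the local-in-time estimate \eqref{local_Strichartz_LS'} on each unit interval $[n,n+1]$; the resulting $\ell^p$-sum over $n$ is controlled not by crude summation but by the \emph{global} $L^2$-in-time bound coming from the smoothing estimate — this is the crux of the argument of \cite{BuGeTz04a}. Having the homogeneous estimate, one deduces the inhomogeneous and mixed estimates by $TT^*$ and duality, together with the Christ–Kiselev lemma, which is applicable because $p>2$ in Assumption~\ref{As:pqLS'}.

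For (b), I would first treat the homogeneous problem $\psi=0$: in the Duhamel formula above the first term contributes $\|\varphi_0\|_{H^{\delta_1}_D}$ by (a), while for the second term Theorem~\ref{T:L2} with $\psi=0$ gives $\|\langle x\rangle^{-\nu_1}\varphi\|_{L^2_tH^{1/2}_D}\lesssim\|\varphi_0\|_{L^2}$ for suitable $\nu_1>1/2$; since $\alpha>2$ one may choose $\nu_1,\nu_2>1/2$ with $2<\nu_1+\nu_2<\alpha$, so that \eqref{decayV} yields $\|\langle x\rangle^{\nu_2}V\varphi\|_{L^2_tH^{-1/2}_D}\lesssim\|\langle x\rangle^{-\nu_1}\varphi\|_{L^2_tH^{1/2}_D}$, and the mixed estimate from (a) closes the loop (the assumption $\delta_i<1/2$ ensures the half-derivative gained by the smoothing estimate dominates the loss $\delta_1$). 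For the inhomogeneous problem $\varphi_0=0$, I would first record the dual of the (now proved) homogeneous estimate for $e^{-itL_V}P_c$, then run the same scheme: the $\psi$-contribution to $\varphi$ produces exactly the term $\|(1-\Delta_D)^{(\delta_1+\delta_2)/2}\psi\|_{L^{p_2'}L^{q_2'}}$ via the free inhomogeneous estimate and Christ–Kiselev, and the $V\varphi$-contribution is again absorbed through Theorem~\ref{T:L2} and a Christ–Kiselev argument. Some care is needed in the bookkeeping of the powers of $(1-\Delta_D)$ (equivalently of $1+L_V$ on $\operatorname{Ran}P_c$) when $\delta_i>0$; these are the ``mixed'' inequalities recorded in Appendix~\ref{AA:mixed}.

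The hard part is step (a): upgrading the \emph{local} Strichartz estimate \eqref{local_Strichartz_LS'} to a global one. Everything away from $K$ reduces to the Euclidean theory, but near the obstacle the naive summation of local estimates over unit time intervals diverges, so the needed gain must come entirely from the $L^2$-in-time decay furnished by the resolvent estimate of Proposition~\ref{pp:alpND}; making this interplay quantitative — and carrying the correct $(1-\Delta_D)$-powers through it when a derivative loss $\delta_i>0$ is present, with the repeated application of the Christ–Kiselev lemma being legitimate only because $p>2$ — is the technical heart of the proof. By contrast, once the weighted estimates of Theorem~\ref{T:L2} are available, the absorption of the short-range term $V\varphi$ is routine because $\alpha>2$.
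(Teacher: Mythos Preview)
Your proposal is correct and uses the same essential ingredients as the paper: the smoothing effect coming from Proposition~\ref{pp:alpND} / Theorem~\ref{T:L2}, the cutoff decomposition into a piece away from $K$ (handled by the Euclidean Strichartz estimate \eqref{mixteRN} with a commutator source controlled by smoothing) and a piece near $K$ (handled by the local estimate \eqref{local_Strichartz_LS'} on unit intervals, summed via $\ell^2\subset\ell^{p_1}$ thanks to the global $L^2_t$ bound), together with Christ--Kiselev for all Duhamel terms. The only difference is organizational: you first establish the global $V=0$ Strichartz estimate as a separate result (your step~(a)) and then treat $V\varphi$ as a source in a Duhamel formula for $e^{it\Delta_D}$ (your step~(b)), whereas the paper works directly with the $L_V$-flow $v=P_c\varphi$, applies the cutoff decomposition to $v$ itself, and invokes the local $V=0$ estimates of Proposition~\ref{P:local_estimates} only as ingredients in the near-obstacle step; the two arrangements are equivalent.
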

In Subsection \ref{sub:prelim} we will state and sketch the proofs of estimates that will be needed in the proof: the smoothing effect for the Schr\"odinger equation with a potential, and local-in-times estimates for the Schr\"odinger equation without potential. Subsection \ref{sub:Strichartz} is devoted to the proof of Theorem \ref{T:LS'}.
 
\subsection{Preliminary estimates}
\label{sub:prelim}
We start by proving the smoothing effect for solutions of \eqref{LS_intro} projected on the continous spectrum, i.e. the estimate \eqref{smoothing} in Theorem \ref{T:L2}. The estimate is a consequence of the following estimate on the resolvent of $L_V$ projected on the continuous spectrum:
\begin{equation}
	\label{res_smoothing}
\sup_{\im \lambda\neq 0}
	\|\langle x\rangle^{-\nu_1}
	(L_V-\lambda)^{-1}P_c\langle x\rangle^{-\nu_2}\|_{H_D^{-1/2}(\Omega)\to H^{1/2}_D(\Omega)}<+\infty, 
\end{equation} 
where $\nu_1,\nu_2>1/2$, $\nu_1+\nu_2>2$.
The fact that \eqref{res_smoothing} implies \eqref{smoothing} is standard and we omit the proof (see e.g. the proof of Proposition 2.7 in \cite{BuGeTz04a}).

 To prove \eqref{res_smoothing}, we first notice that by Proposition \ref{pp:alpND} and an argument similar to the proof of \eqref{TD22},
\begin{equation}
\label{res_smooth1}
	\sup_{\substack{\re \lambda>-a\\ \im \lambda\neq 0}}
	\;	\|\langle x\rangle^{-\nu_1}
	(L_V-\lambda)^{-1}\langle x\rangle^{-\nu_2}\|_{L^2(\Omega)\to H^{1}_0(\Omega)}<+\infty. 
\end{equation} 
Next, we notice that 
\begin{equation}
 \label{res_smooth2}
	\sup_{\substack{\re \lambda\leq -a\\ \im \lambda\neq 0}}
	\;	\|\langle x\rangle^{-\nu_1}
	(L_V-\lambda)^{-1}P_c\langle x\rangle^{-\nu_2}\|_{L^2(\Omega)\to H^{1}_0(\Omega)}<+\infty.
\end{equation} 
Indeed, let $F\in L^2(\Omega)$ and $\lambda$ such that $\im \lambda\neq 0$ and $\re\lambda\leq -a$. Let $U=(L_V-\lambda)^{-1}P_c \la x\ra^{-\nu_2}F$. Then $U\in H^1_0(\Omega)\cap H^2(\Omega)$, $P_cU=U$ and
$$ (L_V-\lambda) U=P_c \la x\ra^{-\nu_2}F.$$
Taking the real part of the scalar product by $U$ in $L^{2}(\Omega)$ of the preceding equality, we obtain
$$ -\re \lambda\|U\|_{L^2}^2+\re (L_VU,U)_{L^2(\Omega)} \lesssim \|U\|_{L^2}\left\| \la x\ra^{-\nu_2} F\right\|_{L^2(\Omega)},$$
and the bound \eqref{res_smooth2} follows from Lemma \ref{lem:equiv_norm}. Combining \eqref{res_smooth1} and \eqref{res_smooth2}, we obtain 
\begin{equation*}
	\sup_{\im \lambda\neq 0}
	\;	\|\langle x\rangle^{-\nu_1}
	(L_V-\lambda)^{-1}P_c \langle x\rangle^{-\nu_2}\|_{L^2(\Omega)\to H^{1}_0(\Omega)}<+\infty. 
\end{equation*}
By duality (reversing $\nu_1$ and $\nu_2$), we obtain the same uniform bound for the operator norm from the dual $H^{-1}$ of $H^{1}_0(\Omega)$ to $L^2(\Omega)$. The bound \eqref{res_smoothing} follows by interpolation. \qed

The Schr\"odinger equation without potential on $\R^N$: 
\begin{equation}
 \label{LSRN}
 i\partial_t\varphi+\Delta \varphi=\psi, \quad \varphi_{\restriction t=0}=\varphi_0
\end{equation} 
satisfies the assumptions of Theorem \ref{T:L2}.
In this case, the projector $P_c$ is the identity, and the inequality \eqref{smoothing} the we have proved is the usual smoothing effect on $\R^N$. Together with the Strichartz estimate \eqref{subcritical_Strichartz_RN}, we obtain (using Christ and Kiselev Lemma as in the proof below) the following mixed Strichartz/smoothing effect for solutions of \eqref{LSRN}
\begin{equation}
 \label{mixteRN}
\left\| \varphi\right\|_{L^{p}(\R,L^q(\R^N))}\lesssim \|\varphi_0\|_{H^{\delta}(\R^N)}+\|\la x\ra^{\nu} \psi\|_{L^2(\R,H^{\delta-1/2}(\R^N))},
\end{equation}
where $(p,q,\delta)$ satisfies \eqref{admissible_LS_delta}, with $p>2$, and $\nu>1/2$.

We next notice, for the Schr\"odinger equation without potential on $\Omega$, that the local homogeneous Strichartz estimates imply local-in-time inhomogeneous Strichartz estimates, as well as a mixed Strichartz estimate/local smoothing inequality.
\begin{proposition}
 \label{P:local_estimates}
 Assume that $K$ satisfies Assumption \ref{As:K} and that $(p_1,q_1,\delta_1)$ and $(p_2,q_2,\delta_2)$  satisfy Assumption \ref{As:pqLS'}.
 Assume that $V=0$. 
 Let $\nu>1/2$. Let $\varphi$, $\psi$ be a solution of \eqref{LS_intro}, \eqref{LS_ID}.  Then
 \begin{align}
 \label{LS_Strichartz1}
\left\|\varphi\right\|_{L^{p_1}\left([0,1],L^{q_1}(\Omega)\right)}&\lesssim \left\|(-\Delta_D+1)^{(\delta_1+\delta_2)/2}\psi\right\|_{L^{p_2'}\left([0,1],L^{q_2'}(\Omega)\right)}+\|\varphi_0\|_{H_D^{\delta_1}}\\
\label{LS_mixte}
\left\|\varphi\right\|_{L^{p_1}\left([0,1],L^{q_1}(\Omega)\right)}&\lesssim \left\|\la x\ra^{\nu}\psi\right\|_{L^{2}\left([0,1],H_D^{\delta_1-1/2}(\Omega)\right)}+\|\varphi_0\|_{H_D^{\delta_1}},
\end{align}
where we have assumed $\chi\psi=\psi$ in \eqref{LS_Strichartz1}.
\end{proposition}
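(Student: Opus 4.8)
The strategy is to first deduce the no-cut-off \emph{homogeneous} local Strichartz estimate from Assumption \ref{As:pqLS'}, and then to obtain the two inhomogeneous estimates by a $TT^*$ argument together with Christ and Kiselev's Lemma (Appendix \ref{A:ChristKiselev}). The only extra ingredient is the weighted local smoothing estimate for $-\Delta_D$ at a general regularity level, which I would derive from Proposition \ref{pp:VzeroND} exactly as \eqref{res_smoothing}, \eqref{smoothing} were derived in Subsection \ref{sub:prelim} in the case $V=0$: interpolating the bound $\sup_{\im\lambda\neq0}\|\la x\ra^{-\nu_1}(-\Delta_D-\lambda)^{-1}\la x\ra^{-\nu_2}\|_{L^2\to H^1_0}<\infty$ (the $V=0$ instance of \eqref{res_smooth1}, itself a consequence of Proposition \ref{pp:VzeroND} as in the proof of \eqref{TD22}) with its $L^2$-dual $H^{-1}\to L^2$ bound gives, for $|\sigma|\le\tfrac12$, a uniform bound $\|\la x\ra^{-\nu_1}(-\Delta_D-\lambda)^{-1}\la x\ra^{-\nu_2}\|_{H^{\sigma-1/2}_D\to H^{\sigma+1/2}_D}<\infty$, whence the smoothing estimate
\[
\big\|\la x\ra^{-\nu}e^{it\Delta_D}\varphi_0\big\|_{L^2(\R,H^{\sigma+1/2}_D)}\lesssim\|\varphi_0\|_{H^\sigma_D},\qquad |\sigma|\le\tfrac12,\ \nu>\tfrac12,
\]
which I will use at the two levels $\sigma=\pm\delta_1$.

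\emph{Step 1: the homogeneous estimate without cut-off}, namely $\|e^{it\Delta_D}\varphi_0\|_{L^{p_j}([0,1],L^{q_j}(\Omega))}\lesssim\|\varphi_0\|_{H^{\delta_j}_D}$, $j=1,2$. Fix $\chi$ as in Assumption \ref{As:pqLS'} and $\widetilde\chi\in C_0^\infty(\R^N)$ equal to $1$ near the (compact) support of $\nabla\chi$ and vanishing near $K$, and set $u=e^{it\Delta_D}\varphi_0$, so $u=\chi u+(1-\chi)u$. The piece $\chi u$ is controlled by \eqref{local_Strichartz_LS'}. The piece $(1-\chi)u$, extended by $0$, solves $i\partial_t w+\Delta_{\R^N}w=-[\Delta,\chi]u=:g$ on $\R^N$ with data $(1-\chi)\varphi_0$; since $\delta_1<\tfrac12$, extension by zero is bounded $H^{\delta_1}_D(\Omega)\to H^{\delta_1}(\R^N)$, so $\|(1-\chi)\varphi_0\|_{H^{\delta_1}(\R^N)}\lesssim\|\varphi_0\|_{H^{\delta_1}_D}$; moreover $g$ is supported where $\widetilde\chi\equiv1$ and is first order in $u$, so the $\sigma=\delta_1$ smoothing estimate gives $\|\la x\ra^{\nu}g\|_{L^2(\R,H^{\delta_1-1/2}(\R^N))}\lesssim\|\widetilde\chi u\|_{L^2(\R,H^{\delta_1+1/2})}\lesssim\|\varphi_0\|_{H^{\delta_1}_D}$. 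Feeding these into the free-space estimates \eqref{subcritical_Strichartz_RN} and \eqref{mixteRN} (legitimate since $(p_1,q_1,\delta_1)$ satisfies \eqref{admissible_LS_delta}) bounds $\|(1-\chi)u\|_{L^{p_1}(\R,L^{q_1}(\R^N))}\lesssim\|\varphi_0\|_{H^{\delta_1}_D}$, and Step 1 follows; the case $j=2$ is treated in the same way.

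\emph{Step 2: the inhomogeneous estimates.} Writing $\varphi(t)=e^{it\Delta_D}\varphi_0-i\int_0^te^{i(t-s)\Delta_D}\psi(s)\,ds$, the homogeneous term is handled by Step 1, so it remains to bound the Duhamel term. For \eqref{LS_Strichartz1}: dualizing Step 1 for $(p_2,q_2,\delta_2)$ gives $\|\int_0^1e^{-is\Delta_D}F\,ds\|_{H^{-\delta_2}_D}\lesssim\|F\|_{L^{p_2'}L^{q_2'}}$; applying this with $F=(1-\Delta_D)^{(\delta_1+\delta_2)/2}\psi$, using that $(1-\Delta_D)^{-(\delta_1+\delta_2)/2}$ maps $H^{-\delta_2}_D$ to $H^{\delta_1}_D$, and then Step 1 for $(p_1,q_1,\delta_1)$, yields the non-retarded bound $\|\int_0^1e^{i(t-s)\Delta_D}\psi\,ds\|_{L^{p_1}L^{q_1}}\lesssim\|(1-\Delta_D)^{(\delta_1+\delta_2)/2}\psi\|_{L^{p_2'}L^{q_2'}}$; Christ and Kiselev's Lemma, applicable since $p_1>2>p_2'$, turns this into the retarded version, which is \eqref{LS_Strichartz1}. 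For \eqref{LS_mixte}: dualizing instead the $\sigma=-\delta_1$ smoothing estimate gives $\|\int_0^1e^{-is\Delta_D}\psi\,ds\|_{H^{\delta_1}_D}\lesssim\|\la x\ra^{\nu}\psi\|_{L^2(\R,H^{\delta_1-1/2}_D)}$, and composing with Step 1 for $(p_1,q_1,\delta_1)$ and Christ and Kiselev's Lemma gives \eqref{LS_mixte}. The hypothesis $\chi\psi=\psi$ in \eqref{LS_Strichartz1} may be invoked to keep $\psi$ spatially compactly supported, which makes the mapping properties of $(1-\Delta_D)^{(\delta_1+\delta_2)/2}$ near $\partial\Omega$ transparent.

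\emph{Main obstacle.} The genuinely technical point is the shifted local smoothing estimate at the fractional levels $\sigma=\pm\delta_1$: while the interpolation of the two vector-valued resolvent bounds is immediate, converting it to the space-time estimate requires controlling commutators between the spatial weight $\la x\ra^{-\nu}$ (and the cut-offs used in Step 1) and fractional powers of $-\Delta_D$ near $\partial\Omega$. For $|\sigma|\le\tfrac12$ these commutators are of strictly lower order and the argument proceeds exactly as in Subsection \ref{sub:prelim} for $\sigma=0$ (alternatively one may quote \cite{BuGeTz04a}); everything else is routine.
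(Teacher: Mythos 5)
Your proof is correct and follows essentially the same strategy as the paper's: decompose $\varphi=\chi\varphi+(1-\chi)\varphi$, control the far piece with the free smoothing/Strichartz estimates on $\R^N$ and the near piece with Assumption \ref{As:pqLS'} (Step 1), then pass to the inhomogeneous estimates by duality and the Christ--Kiselev Lemma (Step 2). Your explicit derivation of the shifted smoothing estimate at levels $\sigma=\pm\delta_1$ by interpolating the weighted resolvent bounds of Proposition \ref{pp:VzeroND} is a useful clarification of a point the paper invokes only implicitly, both in its Step 1 (where the $\sigma=\delta_1$ version of \eqref{smoothing} is what is really used to bound the commutator term in $L^2(\R,H^{\delta_1-1/2})$) and in its Step 2 (where the factor $(1-\Delta_D)^{-\delta_1/2}$ is inserted without addressing its commutator with $\la x\ra^{\nu}$).
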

\begin{proof}
\emph{Step 1.} We prove \eqref{LS_Strichartz1} and \eqref{LS_mixte} with $\psi=0$, i.e
\begin{equation}
\label{Step1_Strich}
\left\|\varphi\right\|_{L^{p_1}\left([0,1],L^{q_1}(\Omega)\right)}\lesssim \|\varphi_0\|_{H_D^{\delta_1}}
\end{equation}
Let $\chi\in C_0^{\infty}(\R^N)$, such that $\chi=1$ in a small neighborhood of the obstacle. By Assumption \ref{As:pqLS'}, we see that it is sufficient to prove \eqref{Step1_Strich} with $\varphi$ replaced by $v=(1-\chi)\varphi$ on the left-hand side of the inequality. We have
$$i\partial_tv+\Delta v=-2\nabla \chi\cdot\nabla v-(\Delta \chi)v,$$
where this equation can be interpreted as a Schr\"odinger equation on $\R\times \R^N$ since $v$ is supported away from the obstacle. Using \eqref{smoothing}, we have
$$  \left\|(1-\Delta)^{\delta_1/2}\left(2\nabla \chi\cdot\nabla v+(\Delta \chi)v\right)\right\|_{L^2(\R,H^{\delta_1-1/2})}\lesssim \|u_0\|_{H_D^{\delta_1}}.$$
By the mixed estimate \eqref{mixteRN}, we obtain $\|v\|_{L^{p_1}(\R,L^{q_1})}\lesssim \|u_0\|_{H_D^{\delta_1}(\Omega)}$, which concludes the proof.


\emph{Step 2.} We prove \eqref{LS_mixte} and \eqref{LS_Strichartz1}  when $\varphi_0=0$.

Since the operator $\varphi_0\mapsto \la x \ra^{-\nu}e^{i\cdot\Delta_D}\varphi_0$ is bounded from $L^2(\Omega)$ to $L^2([0,1],H^{1/2}_D(\Omega))$, we see by duality that
$\psi\mapsto \int_0^1e^{-is\Delta_D}\la x\ra^{-\nu}\psi(s)ds$ extends to a bounded operator from $L^2([0,1],H^{-1/2}_D)$ to $L^2(\Omega)$. By Step 1, we obtain that
$$\psi\mapsto (1-\Delta_D)^{-\delta_1/2} \int_0^1 e^{i(t-s)\Delta_D} \la x\ra^{-\nu_1}\psi(s)ds$$
defines a bounded operator from $L^2([0,1],H_D^{-1/2})$ to $L^{p_1}([0,1],L^{q_1})$. Using Christ and Kiselev Lemma (see Lemma \ref{L:ChristKiselev}), we obtain \eqref{LS_mixte}.

The proof of \eqref{LS_Strichartz1} is similar. Indeed, by Step 1 used twice, one obtains that
$$\psi\mapsto \int_0^{1}e^{-is\Delta_D}(-\Delta_D+1)^{-(\delta_1+\delta_2)/2}\psi(s)ds$$
is bounded form $L^{p_2'}([0,1],L^{q_2'})$ to $L^{p_1}([0,1],L^{q_1})$, and the result follows again by Lemma \ref{L:ChristKiselev}.
\end{proof}

\subsection{Proof of the global Strichartz estimates}
\label{sub:Strichartz}
In all the proof, we assume Assumptions \ref{As:K}, \ref{As:V}, \ref{As:zero}, and consider two triplets $(p_1,q_1,\delta_1)$ and $(p_2,q_2,\delta_2)$ that satisfy Assumption \ref{As:pqLS'}.

\emph{Step 1. Global Strichartz estimate for the homogeneous equation away from the obstacle.} In the two first steps we prove the conclusion of Theorem \ref{T:LS} when $\psi=0$. We let $\chi$ be as in Assumption \ref{As:pqLS'}. We fix $\theta\in C_0^{\infty}(\R^N)$, with $\theta=1$ close to $K$, such that $\chi=1$ in the support of $\theta$. We let $\varphi$ be a solution of \eqref{LS_intro}, \eqref{LS_ID} with $\psi=0$, and let $v=P_c\varphi$, which is also solution of \eqref{LS_intro}, with initial data $P_c\varphi_0$. By density we can assume that $\varphi_0\in H_0^{1}(\Omega)$. By the smoothing effect \eqref{smoothing}, for any $\nu>1/2$,
\begin{equation}
\label{vH1/2}
\left\|\la x\ra^{-\nu} v\right\|_{L^2(\R,H^{1/2}_D)}\lesssim \|P_c\varphi_0\|_{L^2(\Omega)}.
\end{equation}

In this step we bound the norm $(1-\theta)v$ in $L^{p_1}L^{q_1}$. In the next step we will consider  $\theta v$. We have 
\begin{equation}
 \label{faraway}
 i\partial_t \left((1-\theta)v\right)+\Delta\left((1-\theta)v\right)=-V(1-\theta)v -2\nabla\theta\cdot\nabla v-\Delta \theta v.
\end{equation} 
Note that $(1-\theta)v$ and the right-hand side of \eqref{faraway} are supported away from the obstacle, so that we can consider \eqref{faraway} as an equation on $\R^N$. We fix $\nu$ with $1<\nu<\alpha/2$, where $\alpha$ is given by the decay assumption \eqref{decayV} on $V$.
Using the mixed estimate \eqref{mixteRN}, we obtain
\begin{multline}
 \label{mixte1}
 \left\|(1-\theta)v\right\|_{L^{p_1}L^{q_1}}\lesssim \left\|\la x\ra^{\nu} V(1-\theta)v\right\|_{L^2(\R,H^{\delta_1-1/2})}
 +\big\|\la x\ra^{\nu} |\nabla \theta||\nabla v|\big\|_{L^2(\R,H^{\delta_1-1/2})}\\+\left\|\la x\ra^{\nu}(\Delta \theta) v\right\|_{L^2(\R,H^{\delta_1-1/2})}+\|(1-\theta)P_c\varphi_0\|_{H^{\delta_1}},
\end{multline}
where $H^{a}=H^{a}(\R^N)$. Recall that $\delta_1+1/2\in [1/2,1)$ by Assumption \ref{As:pqLS'}. Using the definition of $H^{\delta_1+1/2}_D$ as an interpolation space between $L^2(\Omega)$ and $H^{1}_0(\Omega)$, and since $H^1_0(\Omega)$ can be continuously embedded in $H^1(\R^N)$ (extending the elements of $H^1_0(\Omega)$ by $0$ in $K$) , we see that
$$\big\|\la x\ra^{\nu} |\nabla \theta||\nabla v|\big\|_{L^2(\R,H^{\delta_1-1/2})}\lesssim \|\la x\ra^{-\nu}v\|_{L^2(\R,H^{\delta_1+1/2}_D)}\lesssim \|P_c\varphi_0\|_{H^{\delta_1}_D},$$
by \eqref{vH1/2}. Furthermore, using again \eqref{vH1/2},
\begin{multline*}
\left\|\la x\ra^{\nu} V(1-\theta)v\right\|_{L^2(\R\times \R^N)}+\left\|\la x\ra^{\nu}(\Delta \theta) v\right\|_{L^2(\R\times \R^N)}\lesssim \|\la x\ra^{-\nu}v\|_{L^2(\R\times \Omega)}\lesssim \|P_c\varphi_0\|_{L^2}.
\end{multline*}
Using interpolation, it is also easy to prove $\|(1-\theta)P_c\varphi_0\|_{H^{\delta_1}}\lesssim \|P_c\varphi_0\|_{H^{\delta_1}_D}$.
As a conclusion, we obtain that when $\psi=0$,
\begin{equation}
 \label{StrichartzLS_faraway}
 \left\|(1-\theta)v\right\|_{L^{p_1}L^{q_1}}\lesssim \|P_c\varphi_0\|_{H_D^{\delta_1}}.
\end{equation} 

\emph{Step 2. Global Strichartz estimate for the homogeneous equation close to the obstacle.}
In this step we consider the contribution of $\theta v$. We have 
\begin{equation}
 \label{LSclose}
 i\partial_t(\theta v)+L_V(\theta v)= -2\nabla\theta\cdot\nabla v-\Delta \theta v.
\end{equation} 
By \eqref{vH1/2}, we have 
$$ \sum_{k\in \Z} \|\theta v\|^2_{L^2\left([k,k+1],H_D^{1/2}(\Omega)\right)}\leq \|P_c\varphi_0\|^2_{L^2(\Omega)}.$$
Let $t_k\in [k,k+1]$ such that $\|\theta v(t_k)\|_{H_D^{1/2}}=\min_{k\leq t\leq k+1} \|\theta v(t)\|_{H_D^{1/2}}$ (recall that we have assume that $\varphi_0\in H^{1}_0$, so that $v\in C^0(\R,H^1_0)$ and the min is attained). Then
\begin{equation}
 \label{ell2}
 \sum_{k\in \Z} \|\theta v(t_k)\|^2_{H_D^{1/2}}\leq \left\|P_c \varphi_0\right\|^2_{L^2(\Omega)}.
\end{equation} 
By \eqref{LSclose} and the mixed local-in-time smoothing effect/Strichartz estimate \eqref{LS_mixte}, 
\begin{equation*}
\left\|\theta v\right\|_{L^{p_1}([k,k+1],L^{q_1})}^{p_1}\lesssim \left\|2\nabla\theta\cdot\nabla v+\Delta \theta v\right\|_{L^2([k,k+1],H^{\delta_1-1/2}_D)}^{p_1}+\|\theta v(t_k)\|^{p_1}_{H_D^{\delta_1}(\Omega)}.
\end{equation*}
Since $\ell^2(\Z)\subset\ell^{p_1}(\Z)$ with continuous embedding, we have, summing up over $k$, 
\begin{multline*}
\left\|\theta v\right\|_{L^{p_1}(\R,L^{q_1})}^2
\lesssim \sum_{k\in \Z}\|\theta v(t_k)\|^{2}_{H_D^{\delta_1}}+\sum_k \left\|2\nabla\theta\cdot\nabla v+\Delta \theta v\right\|_{L^2([k,k+1],H^{\delta_1-1/2}_D)}^2
\\
\lesssim \left\|\la x\ra^{-\nu}v\right\|_{L^2(\R,H^{\delta_1+1/2}_D)}^2+\|P_c\varphi_0\|_{L^2}^2\lesssim \|P_c\varphi_0\|_{H_D^{1/2}}^2
\end{multline*}
by \eqref{ell2}, \eqref{vH1/2}. Combining with \eqref{StrichartzLS_faraway}, we obtain the Strichartz estimate 
$$ \|v\|_{L^{p_1}L^{q_1}}\lesssim \|P_c\varphi_0\|_{H_D^{\delta_1}}$$
when $\psi=0$. 

\emph{Step 3. Inhomogeneous Strichartz estimate.} It remains to prove the Strichartz estimate \eqref{StrichartzLS} when $\varphi_0=0$ and $\psi$ is not identically $0$. This is again a standard application of the homogenous Strichartz inequality, the corresponding dual inequality, and  Christ and Kiselev Lemma. We omit the proof.

\appendix

\section{Decay of the weighted energy for the free wave equation}
\label{A:free_weighted}
Consider the free wave equation 
\begin{equation}
\label{FW_A}
(\partial_t^2-\Delta)u=0,\quad (t,x)\in \R\times \R^N.
\end{equation} 
We denote $\dot{H}^1\times L^2=(\dot{H}^1\times L^2)(\R^N)$. In this appendix, we give the proof of the following well-known result for the sake of completeness:
\begin{proposition}
	Let $u$ be a solution of \eqref{FW_A} with initial data $(u,\partial_tu)(0)=(u_0,u_1)\in \dot{H}^1\times L^2$. Let $\nu>\frac{1}{2}$. Then $\la x\ra^{-\nu}\vec{u}\in L^2(\R,\dot{H}^1\times L^2)$ and 
	$$ \|\la x\ra^{-\nu}\vec{u}\|_{L^2(\R,\dot{H}^1\times L^2)}\lesssim \|(u_0,u_1)\|_{\dot{H}^1\times L^2}.$$
	Furthermore, if $\nu>3/2$, 
	$$ \|\la x\ra^{-\nu}u\|_{L^2(\R\times \R^N)}\lesssim \|(u_0,u_1)\|_{\dot{H}^1\times L^2}.$$
\end{proposition}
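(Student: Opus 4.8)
The plan is to reduce the whole statement to the single half--wave smoothing estimate
\[
\bigl\|\langle x\rangle^{-\nu}e^{it\sqrt{-\Delta}}g\bigr\|_{L^2(\R\times\R^N)}\lesssim \|g\|_{L^2(\R^N)}\qquad \bigl(\nu>\tfrac12\bigr),
\]
which I abbreviate $(\dagger)$, and then to prove $(\dagger)$ directly. For the reduction I would write $u(t)=\cos(t\sqrt{-\Delta})u_0+(\sqrt{-\Delta})^{-1}\sin(t\sqrt{-\Delta})u_1=e^{it\sqrt{-\Delta}}g_++e^{-it\sqrt{-\Delta}}g_-$ with $g_\pm=\tfrac12u_0\pm\tfrac1{2i}(\sqrt{-\Delta})^{-1}u_1$. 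For the weighted energy estimate, the point is that $\partial_tu$ and each $\partial_ju$ again solve the free wave equation, now with Cauchy data $(u_1,\Delta u_0)$, resp.\ $(\partial_ju_0,\partial_ju_1)$, lying in $L^2\times\dot H^{-1}$ with norm $\lesssim\|(u_0,u_1)\|_{\dot H^1\times L^2}$; decomposing these into half--waves gives $L^2$ profiles of controlled norm, so the bound on $\langle x\rangle^{-\nu}\vec u$ follows from $(\dagger)$ (using that the $L^2_{t,x}$--norm is invariant under $t\mapsto -t$). For the $L^2$ estimate on $u$ itself the profiles $g_\pm$ only lie in $\dot H^1$; I would write $g_\pm=(\sqrt{-\Delta})^{-1}\tilde g_\pm$ with $\tilde g_\pm\in L^2$ of norm $\lesssim\|(u_0,u_1)\|_{\dot H^1\times L^2}$, pick $\nu'\in(\tfrac12,\nu-1)$ (possible since $\nu>\tfrac32$), apply $(\dagger)$, and be left with the boundedness on $L^2(\R^N)$ of $\langle x\rangle^{-\nu}(\sqrt{-\Delta})^{-1}\langle x\rangle^{\nu'}$ --- a Schur test / Stein--Weiss computation on the kernel $\langle x\rangle^{-\nu}|x-y|^{-(N-1)}\langle y\rangle^{\nu'}$ which works precisely because $\nu-\nu'>1$.

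To prove $(\dagger)$ I may assume $\tfrac12<\nu<\tfrac N2$ (for larger $\nu$ use $\langle x\rangle^{-\nu}\le\langle x\rangle^{-\nu_0}$). Passing to the spatial Fourier transform and polar coordinates $\xi=r\omega$, for fixed $x$ the function $t\mapsto e^{it\sqrt{-\Delta}}g(x)$ is, up to a constant, the Fourier transform in $t$ of $r\mapsto \mathbf 1_{\{r>0\}}\,r^{N-1}F_r(x)$, where $F_r(x)=\int_{\mathbb{S}^{N-1}}e^{irx\cdot\omega}\widehat g(r\omega)\,d\omega$ is the inverse Fourier transform of the measure $\phi_r\,d\sigma$ on the unit sphere evaluated at $rx$, $\phi_r(\omega)=\widehat g(r\omega)$. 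Plancherel in $t$, integration against $\langle x\rangle^{-2\nu}\,dx$, and the substitution $y=rx$ give
\[
\bigl\|\langle x\rangle^{-\nu}e^{it\sqrt{-\Delta}}g\bigr\|_{L^2_{t,x}}^2\ \sim\ \int_0^\infty r^{\,N-1}\Bigl(r^{-1}\!\!\int_{\R^N}\langle y/r\rangle^{-2\nu}\,\bigl|\widehat{\phi_r\,d\sigma}(y)\bigr|^2\,dy\Bigr)\,dr,
\]
and since $\int_0^\infty r^{N-1}\|\phi_r\|_{L^2(\mathbb{S}^{N-1})}^2\,dr=\|g\|_{L^2}^2$, everything reduces to the uniform bound
\[
\int_{\R^N}\langle y/r\rangle^{-2\nu}\,\bigl|\widehat{\phi\,d\sigma}(y)\bigr|^2\,dy\ \lesssim\ r\,\|\phi\|_{L^2(\mathbb{S}^{N-1})}^2\qquad(r>0).
\]
This I would read off from the Agmon--H\"ormander estimate $\|\widehat{\phi\,d\sigma}\|_{B^*}\lesssim\|\phi\|_{L^2(\mathbb{S}^{N-1})}$ (with $B^*$ as in the excerpt), i.e.\ from $\int_{|y|<1}|\widehat{\phi\,d\sigma}|^2\lesssim\|\phi\|^2$ and $\int_{2^j<|y|<2^{j+1}}|\widehat{\phi\,d\sigma}|^2\lesssim 2^j\|\phi\|^2$: for $r\ge1$ split the $y$--integral into dyadic shells, bound $\langle y/r\rangle^{-2\nu}$ by $1$ on $\{|y|\lesssim r\}$ and by $(|y|/r)^{-2\nu}$ on $\{|y|\gtrsim r\}$, and sum the geometric series (convergent since $\nu>\tfrac12$); for $r\le1$ use $\langle y/r\rangle^{-2\nu}\le r^{2\nu}|y|^{-2\nu}$ together with $\int_{\R^N}|y|^{-2\nu}|\widehat{\phi\,d\sigma}|^2\,dy\lesssim\|\phi\|^2$ (finite near the origin because $\nu<\tfrac N2$) and $r^{2\nu}\le r$.

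The only non--routine ingredient is this last uniform weighted estimate on $\widehat{\phi\,d\sigma}$; the rest is bookkeeping, and the exact thresholds $\nu>\tfrac12$ and $\nu>\tfrac32$ are forced respectively by the endpoint of the embedding $B^*\hookrightarrow L^2(\langle x\rangle^{-2\nu}dx)$ and, through the reduction, by the extra power of $\langle x\rangle$ lost in inverting $\sqrt{-\Delta}$. An equivalent but less self--contained alternative would be to recognise $(\dagger)$ as the Kato smoothing estimate for $\sqrt{-\Delta}$, hence as $\sup_{\operatorname{Im}z\ne0}\|\langle x\rangle^{-\nu}(\sqrt{-\Delta}-z)^{-1}\langle x\rangle^{-\nu}\|_{L^2\to L^2}<\infty$, and to obtain the latter from the resolvent bounds for $-\Delta_{\R^N}$ in Proposition~\ref{P:BB} via the substitution $\lambda=z^2$; I prefer the Fourier argument since it sidesteps the slightly delicate treatment of the unbounded factor $\sqrt{-\Delta}$ in $(\sqrt{-\Delta}-z)^{-1}=(\sqrt{-\Delta}+z)(-\Delta-z^2)^{-1}$.
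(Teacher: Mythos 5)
Your treatment of the first estimate is correct, and it is a genuinely different route from the paper's: the paper quotes Smith--Sogge's local energy decay \cite[Corollary 2.3]{SmithSogge00} and rescales over dyadic annuli, whereas you pass to half--waves and prove the Kato smoothing bound $(\dagger)$ by Plancherel in $t$ and the Agmon--H\"ormander trace estimate. Both work, and your route has the advantage of making the $\nu>\tfrac12$ threshold transparent as a $B^*$--embedding endpoint.

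The second estimate, however, has a gap in the case $N=3$ (which is the most relevant one for the paper). Your reduction requires the operator $S:=\langle x\rangle^{-\nu}(\sqrt{-\Delta})^{-1}\langle x\rangle^{\nu'}$ to be bounded on $L^2(\R^N)$ for some $\nu'\in(\tfrac12,\nu-1)$, but for $N=3$ no such $\nu'$ exists: the two requirements $\nu'>\tfrac12$ (needed to invoke $(\dagger)$) and $S$ bounded are incompatible. Concretely, take $\hat f_\eps(\xi)=\eps^{-N/2}\psi(\xi/\eps)$ with $\psi$ a fixed bump, so $\|f_\eps\|_{L^2}\sim 1$. Then $(\sqrt{-\Delta})^{-1}f_\eps(x)=\eps^{N/2-1}g(\eps x)$ with $g=\mathcal F^{-1}(|\eta|^{-1}\psi)$, and a change of variables gives, for $N=3$ and $\nu>\tfrac32$ (so $2\nu>N$),
\[
\bigl\|\langle x\rangle^{-\nu}(\sqrt{-\Delta})^{-1}f_\eps\bigr\|_{L^2}^2\ \sim\ \eps^{-2}\int\langle y/\eps\rangle^{-2\nu}|g(y)|^2\,dy\ \sim\ \eps,
\qquad
\bigl\|\langle x\rangle^{-\nu'}f_\eps\bigr\|_{L^2}^2\ \sim\ \eps^{2\nu'},
\]
so the ratio behaves like $\eps^{1/2-\nu'}\to\infty$ as $\eps\to0$ whenever $\nu'>\tfrac12$. (Equivalently: the Schur/Stein--Weiss bound you invoke needs $\nu<N/2$, i.e.\ $\nu<\tfrac32$ when $N=3$, contradicting $\nu>\tfrac32$; indeed for $f=\langle y\rangle^{-b}$ with $\tfrac32<b<\nu'+1$ the convolution integral defining $Sf$ diverges identically.) The underlying obstruction is a low--frequency one: $(\sqrt{-\Delta})^{-1}$ amplifies $|\xi|\sim\eps$ by $\eps^{-1}$ while the weight $\langle x\rangle^{-\nu}$ can only recover $\eps^{\min(\nu,N/2)}$, and in dimension $3$ this is capped at $\eps^{3/2}$. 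To repair the argument you would need to treat the low frequencies separately (e.g.\ by proving the $r\le1$ part of the uniform bound in your Fourier computation with the weight $r^{-2}$ coming from $|\xi|^{-2}$ kept inside, which precisely requires the $|y|\lesssim\eps$ contribution to saturate the $\eps^{N}$ volume factor), rather than factoring through an $L^2$--bounded commutator. The paper's rescaling argument avoids this entirely, since the Smith--Sogge estimate is applied directly to $u$ rather than to $\sqrt{-\Delta}\,u$.
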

\begin{proof}[Sketch of proof]
	By \cite[Corollary 2.3]{SmithSogge00}, we have that for all $\chi\in C_0^{\infty}(\R^N)$, 
	\begin{equation}
	\label{with_cutoff}
	\|\chi u\|_{L^2(\R,\dot{H}^1\times L^2)}\lesssim \|(u_0,u_1)\|_{\dot{H}^1\times L^2}. 
	\end{equation} 
	In particular, denoting $|\nabla_{t,x}u|^2=(\partial_tu)^2+|\nabla u|^2$, 
	$$ \Big\| |\nabla_{t,x}u| \Big\|_{L^2(\R,L^2(\{|x|\leq 1\})}\lesssim \|(u_0,u_1)\|_{\dot{H}^1\times L^2}. $$
	Rescaling, we obtain that for all $k$,
	$$ \Big\| |\nabla_{t,x}u| \Big\|_{L^2(\R,L^2(\{|x|\leq 2^k\})}^2\lesssim 2^k\|(u_0,u_1)\|_{\dot{H}^1\times L^2}^2, $$
	which yields, for any integer $k$
	$$ \Big\|\langle x\rangle^{-\nu} |\nabla_{t,x}u| \Big\|_{L^2(\R,L^2(\{2^{k-1}\leq |x|\leq 2^k\})}^2\lesssim \left(2^{k}\right)^{1-2\nu}\|(u_0,u_1)\|_{\dot{H}^1\times L^2}^2. $$
	Summing up over $k\geq 0$ and using $2\nu>1$, we obtain 
	$$ \Big\|\langle x\rangle^{-\nu} |\nabla_{t,x}u| \Big\|_{L^2(\R\times \R^N)}^2\lesssim \|(u_0,u_1)\|_{\dot{H}^1\times L^2}^2. $$
	Which, together with \eqref{with_cutoff}, and the estimate $$\|\nabla (\langle x\rangle^{-\nu}u)\|_{L^2(\R^N)}\lesssim \|\langle x\rangle^{-\nu}|\nabla u|\|_{L^2(\R^N)}+\|\nabla (\chi u)\|_{L^2(\R^N)}$$ yields the first estimate. The proof of the second estimate is similar and we omit it.
\end{proof}

\section{Decay of zero energy state}
\label{A:zero}
In this appendix, we prove:
\begin{proposition}
\label{P:decay_resonance}
Assume $N\geq 3$ and that $V\in C^0(\overline{\Omega})$ satisfies \eqref{decayV}.
Let $v\in L^2_{\mathrm{loc}}(\Omega)$ such that $\langle x\rangle^{-\nu}v\in L^2(\Omega)$ for some $\nu<\alpha-1$. Assume 
\begin{equation}
\label{eq:elliptic}
-\Delta v+V v=0 
\end{equation} 
for large $|x|$, in the sense of distribution. Then 
\begin{equation}
\label{decay_state}
\forall \rho <\frac{N}{2}-2,\quad \langle x\rangle^{\rho} v\in L^2(\Omega). 
\end{equation} 
\end{proposition}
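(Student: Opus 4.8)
The plan is to turn the exterior equation into an equation on all of $\R^N$, represent $v$ by a Riesz potential, and then bootstrap the Stein--Weiss inequality.

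First I would localize. Fix $R_0$ with $K\subset\{|x|<R_0\}$ and $-\Delta v+Vv=0$ on $\{|x|>R_0\}$, choose $\chi\in C^\infty(\R^N)$ with $\chi\equiv0$ on $\{|x|<R_0\}$ and $\chi\equiv1$ on $\{|x|>2R_0\}$, and set $w=\chi v$, extended by $0$ inside $K$. Then $w$ solves $-\Delta w=g$ on $\R^N$ in the distributional sense, with
$$ g=-\chi Vv-2\nabla\chi\cdot\nabla v-(\Delta\chi)v. $$
By interior elliptic regularity $v\in H^2_{\mathrm{loc}}(\{|x|>R_0\})$, so the last two terms of $g$ lie in $L^2$ with compact support in $\{R_0<|x|<2R_0\}$; and since $|V|\lesssim\langle x\rangle^{-\alpha}$ while $\langle x\rangle^{-\nu}v\in L^2$ with $\nu<\alpha-1$, the pointwise bound $|\langle x\rangle^{\tau}\chi Vv|\lesssim\langle x\rangle^{\tau-\alpha+\nu}\langle x\rangle^{-\nu}|v|$ shows $\langle x\rangle^{\tau}g\in L^2(\R^N)$ for every $\tau<\alpha-\nu$, in particular for some $\tau>1$.

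Next I would establish the representation $w=c_N I_2 g$ on $\R^N$, where $I_2 f=|\cdot|^{2-N}*f$ and $c_N$ is the Newtonian constant, so that $v=c_N I_2 g$ on $\{|x|>2R_0\}$. One first checks that $I_2 g$ is a well-defined tempered distribution (using the decay of $g$ just obtained) lying, by the Stein--Weiss bound below, in a weighted space $L^2(\langle x\rangle^{-2s}dx)$. Then $h:=w-c_N I_2 g$ is harmonic on $\R^N$; Moser iteration on the balls $B(x_0,|x_0|/2)$ (applicable since $|x_0|^2\|V\|_{L^\infty(B(x_0,|x_0|/2))}\lesssim|x_0|^{2-\alpha}\to0$) gives the a priori bound $|v(x)|\lesssim\langle x\rangle^{\nu-N/2}$, so $h$ is a harmonic function of polynomial growth, hence a polynomial; combining its degree bound with the decay of $I_2 g$ (a Liouville argument) forces $h\equiv0$ --- this is immediate in the regime $\nu<N/2$ relevant here (a zero-energy state has $\langle x\rangle^{-1}v\in L^2$, i.e.\ $\nu=1<N/2$). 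Thus $v=c_N I_2 g$ on $\{|x|>2R_0\}$, and on the remaining bounded region $v\in L^2_{\mathrm{loc}}$.

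Finally I would run the bootstrap. By the Stein--Weiss inequality \cite{SW58} with $p=q=2$ and kernel exponent $N-2$ (the version with $\langle x\rangle$ in place of $|x|$ being harmless because $g$ is supported away from the origin): for $2-\tfrac N2<\beta_2<\tfrac N2$,
$$ \big\|\langle x\rangle^{-(2-\beta_2)}I_2 g\big\|_{L^2(\R^N)}\lesssim\big\|\langle x\rangle^{\beta_2}g\big\|_{L^2(\R^N)}. $$
Starting from $\langle x\rangle^{-\sigma}v\in L^2$ one gets $\langle x\rangle^{\beta_2}g\in L^2$ for every $\beta_2<\alpha-\sigma$; taking $\beta_2$ as large as allowed yields $\langle x\rangle^{-\sigma'}v\in L^2(\Omega)$ for every $\sigma'>\max\!\big(2-\tfrac N2,\ \sigma-(\alpha-2)\big)$. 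Since $\alpha-2>0$ this lowers $\sigma$ by almost $\alpha-2$ at each step, so after finitely many iterations $\langle x\rangle^{-\sigma}v\in L^2(\Omega)$ for all $\sigma>2-\tfrac N2$, i.e.\ $\langle x\rangle^{\rho}v\in L^2(\Omega)$ for all $\rho<\tfrac N2-2$, which is \eqref{decay_state}; the compactly supported part of $g$ (whose potential behaves like $|x|^{2-N}$) shows this range is sharp. The main obstacle is the second step --- passing to the exact Riesz-potential representation by killing the harmonic/polynomial remainder, where the elliptic a priori bound and the size of $\nu$ enter; the weighted $L^2$ bound for $I_2$ and the finite iteration are then routine.
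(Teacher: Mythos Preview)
Your approach is essentially the same as the paper's: localize away from the obstacle, write $\chi v$ as the Riesz potential $c_N T_{N-2}$ of $f=(\Delta\chi)v+2\nabla\chi\cdot\nabla v+\chi Vv$, and bootstrap the Stein--Weiss inequality \eqref{eq:SW} to gain $\alpha-2>0$ in the weight exponent at each step until the ceiling $N/2-2$ is reached. The only difference is that the paper simply asserts the representation $\chi v=c_N T_{N-2}f$, whereas you take the extra care of ruling out a harmonic remainder via Moser/Liouville; your restriction to $\nu<N/2$ for that step covers all uses of the proposition in the paper (zero-energy states have $\nu=1$, and in Case~3 of Proposition~\ref{pp:alpND} one may take $\nu_1$ close to $1/2$).
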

\begin{remark}
The example given in Remark \ref{R:example} shows that the decay obtained in Proposition \ref{P:decay_resonance} is optimal, at least in the case where $K$ is a Euclidean ball or is empty. Indeed, for all $N\geq 3$, the solution $v$ of \eqref{eq:elliptic} given in this example satisfies \eqref{decay_state} and 
\begin{equation*}
\langle x\rangle^{\frac{N}{2}-2} v\notin L^2(\Omega). 
\end{equation*} 
\end{remark}
\begin{proof}
Recall a special case of Theorem $B^*$  of Stein-Weiss \cite{SW58}: 
Let $0<\gamma<N$ for $N\ge 2$ and for $\sigma >1$,  set 
$$
(T_\gamma g)(x)=\int_{\R^N} \frac{g(y)}{|x-y|^\gamma}dy,\;\;\forall\;g\in L^{2}(|x|^{2\sigma} dx).
$$
If $\max\{\sigma,\beta\}<\frac{N}{2}$ and $\sigma+\beta+\gamma=N$, then 
$\exists\; C>0$ such that 
\begin{equation}
\label{eq:SW}
\bigl \| |x|^{-\beta}T_\gamma g\bigr\|_{L^2(\R^N)}\le C\,\bigl\||x|^\sigma g\bigr\|_{L^2(\R^N)}.
\end{equation}
Let $\chi\in C^{\infty}(\R^N)$ such that $\chi(x)=0$ in a neighborhood of $K$, $\chi(x)=1$ for large $|x|$, and \eqref{eq:elliptic} is satisfied on the support of $\chi$.
We have
\begin{equation}
\label{eq:newform}
\chi v=(\Delta_{\R^N})^{-1}f=c_NT_{N-2}f, \; 
f=(\Delta \chi)v+2\nabla\chi\cdot\nabla v+\chi Vv.
\end{equation}
We fix a small $\delta>0$ and prove that for any $\rho\leq \frac{N}{2}-2-\delta$, 
\begin{equation}
\label{decay:11}\langle x\rangle^{\rho} v\in L^2(\Omega).
\end{equation} 
Note that \eqref{decay:11} holds for $\rho=-\nu$ by our assumptions. We next prove that if \eqref{decay:11} holds for some $\rho$ with $-\nu\leq \rho<\frac{N}{2}-2-\delta$, then $\langle x\rangle^{\rho'}v$ is also in $L^2(\Omega)$, where $\rho'=\min\left( \frac{N}{2}-2-\delta,\rho+\alpha-2 \right)$ and $\alpha$ is as in \eqref{decayV}. Since $\alpha>2$, \eqref{decay:11} will follow for all $\rho\leq\frac{N}{2}-2-\delta$ after a finite number of iterations. 

Let $\gamma=N-2$, $\sigma=\min\left( \rho+\alpha,\frac{N}{2}-\delta \right)$, $\beta=2-\sigma$. We have obviously $\sigma+\beta+\gamma=N$, $\sigma <\frac{N}{2}$. The fact that $\sigma>1$ follows from the assumptions $N>2$ and $\nu<\alpha-1$, and it is also easy to check that the assumptions on $\nu$, $\alpha$ and $N$, and the fact that $\delta$ is small, imply $\beta<\frac N2$. By \eqref{decay:11}, $\langle x\rangle^{\sigma}f\in L^2(\R^N)$.
Applying \eqref{eq:SW} to \eqref{eq:newform},  we obtain 
$\langle x \rangle^{-\beta}\chi v\in L^2(\R^N)$.
Noting that $-\beta=\rho'$ and using that $v\in L^2_{\mathrm{loc}}(\Omega)$, we deduce $\langle x\rangle^{\rho'}v\in L^2(\Omega)$, which concludes the proof.
\end{proof}
\section{Christ and Kiselev Lemma and applications}
\label{A:ChristKiselev}
In this appendix we recall Christ and Kiselev Lemma \cite{ChKi01}, in the version appearing in \cite{SmithSogge00}, and prove, as a consequence of this Lemma and the results obtained above, several dispersive estimates.
\subsection{Statement of the lemma}
\begin{lemma}
\label{L:ChristKiselev}
Let $X$ and $Y$ be Banach spaces. 
 Let $1\leq r<s\leq \infty$ and $-\infty\leq a<b\leq \infty$. Let $K$ a continuous function from $\R^2$ to the space of bounded linear operators from $X$ to $Y$. Let
 $$ (A f)(t)=\int_a^b K(t,\tau)f(\tau)d\tau,$$
 and assume that $A$ is a  bounded operator from $L^r((a,b),X)$ to $L^s((a,b),Y)$, with operator norm $C$. Define the operator $\bar{A}$ by
 $$\bar{A} f(t)=\int_a^t K(t,\tau)f(\tau)d\tau.$$
 Then $\bar{A}$ extends to a bounded operator from $L^r((a,b),X)$ to $L^s((a,b),Y)$, with operator norm $\leq \frac{2C\theta^2}{1-\theta}$, where $\theta=2^{\frac 1s-\frac 1r}$.
\end{lemma}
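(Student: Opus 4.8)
The plan is to prove Lemma~\ref{L:ChristKiselev} by the dyadic decomposition of the ``lower triangle'' $\{a<\tau<t<b\}$ adapted to the $L^r$-profile of $f$, which is the original argument of Christ and Kiselev~\cite{ChKi01} (in the form used in~\cite{SmithSogge00}). First I would reduce, by density and using the continuity of $(t,\tau)\mapsto K(t,\tau)$ (which makes $Af(t)$ and $\bar A f(t)$ well defined pointwise), to $f$ continuous with compact support in $(a,b)$, and normalize $\|f\|_{L^r((a,b),X)}=1$. Put $d\mu=\|f(\tau)\|_X^r\,d\tau$, a non-atomic probability measure on $(a,b)$.

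Next I would build, for each generation $n\ge 0$, a partition of $(a,b)$ (up to $\mu$-null sets) into left-to-right ordered intervals $I_n^1,\dots,I_n^{2^n}$ with $\mu(I_n^k)=2^{-n}$ and $I_{n-1}^k=I_n^{2k-1}\cup I_n^{2k}$ (take $I_n^k$ from the quantile function of $\mu$, with a fixed tie-breaking on intervals where $f$ vanishes). The key elementary fact is the identity $\indic_{\{\tau<t\}}=\sum_{n\ge1}\sum_{k=1}^{2^{n-1}}\indic_{I_n^{2k}}(t)\,\indic_{I_n^{2k-1}}(\tau)$, valid away from a Lebesgue-null set (and on the exceptional set both sides integrate to the same thing against $f(\tau)\,d\tau$): a generic pair with $\tau<t$ sits in a common generation-$n$ interval for all $n$ below a first index $n^\ast\ge1$ at which $\tau$ falls in the left child and $t$ in the right child of their common parent, and only the $(n^\ast,k)$ term captures it; here $n^\ast<\infty$ because $t,\tau$ in the same generation-$n$ interval forces $\mu((\tau,t))\le 2^{-n}$. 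Plugging this into $\bar A f(t)=\int\indic_{\{\tau<t\}}K(t,\tau)f(\tau)\,d\tau$ gives $\bar A f=\sum_{n\ge1}g_n$ with $g_n(t)=\sum_k\indic_{I_n^{2k}}(t)\,[A(f\indic_{I_n^{2k-1}})](t)$.

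Then comes the quantitative step, an almost-orthogonality estimate. Since the intervals $\{I_n^{2k}\}_k$ are pairwise disjoint, the $L^s((a,b),Y)$-norm of $g_n$ decouples into an $\ell^s_k$-sum (with the usual $\ell^\infty$ interpretation if $s=\infty$), and the boundedness of $A$ together with $\|f\indic_{I_n^{2k-1}}\|_{L^r((a,b),X)}^r=\mu(I_n^{2k-1})=2^{-n}$ yields
\[
\|g_n\|_{L^s((a,b),Y)}\le C\Big(\sum_{k=1}^{2^{n-1}}\|f\indic_{I_n^{2k-1}}\|_{L^r((a,b),X)}^s\Big)^{1/s}=C\,(2^{n-1})^{1/s}(2^{-n})^{1/r}=C\,2^{-1/s}\theta^{\,n},
\]
where $\theta=2^{1/s-1/r}$ and the convention $(2^{n-1})^{1/\infty}=1$ covers $s=\infty$. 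The strict inequality $r<s$ is exactly what makes $\theta<1$, so summing the geometric series gives $\|\bar A f\|_{L^s((a,b),Y)}\le\sum_{n\ge1}\|g_n\|_{L^s((a,b),Y)}\le C\,2^{-1/s}\theta/(1-\theta)$, and since $1/r\le 1+2/s$ this is bounded by the claimed $\tfrac{2C\theta^2}{1-\theta}$.

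I expect the only real obstacle to be expository rather than conceptual: setting up the $\mu$-adapted dyadic partition cleanly, and justifying the indicator identity together with the term-by-term integration, while handling the intervals on which $f$ vanishes and the endpoints. Once the decomposition is in place, the two estimates that carry the weight --- disjointness of $\{I_n^{2k}\}_k$ giving the $\ell^s$ decoupling, and the geometric decay $\theta^n$ --- are immediate, and the whole proof is self-contained modulo these routine points.
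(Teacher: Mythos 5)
The paper does not actually prove Lemma~\ref{L:ChristKiselev}; it defers to \cite[Proof of Lemma 3.1]{SmithSogge00}, and your argument is precisely that classical Christ--Kiselev proof: the $\mu$-adapted dyadic partition, the indicator identity $\indic_{\{\tau<t\}}=\sum_{n\ge 1}\sum_k\indic_{I_n^{2k}}(t)\indic_{I_n^{2k-1}}(\tau)$ up to a null set, the $\ell^s$-decoupling over the disjoint right children $\{I_n^{2k}\}_k$, and the geometric sum in $\theta^n$. Your steps and constants check out; in fact the bound $C\,2^{-1/s}\theta/(1-\theta)$ you obtain is slightly sharper than the stated $2C\theta^2/(1-\theta)$, the comparison reducing to $1/r\le 1+2/s$, which holds automatically since $r\ge 1$.
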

See \cite[Proof of Lemma 3.1]{SmithSogge00} for the proof.
\subsection{Mixed inequalities}
\label{AA:mixed}
We state here inequalities combining Strichartz inequalities of Theorems \ref{T:wave} and \ref{T:LS} with the $L^2$ integrability property of Theorem \ref{T:L2}. We omit the proofs, that are close to Step 1 of the proof of Theorem \ref{T:wave} in Subsection \ref{sub:mainproof}, and to the proof of Theorem \ref{T:Strichartz_H-1} below.
\begin{theorem}
With the assumptions and notations as in Theorem \ref{T:wave}, one has, for any $\nu>1/2$ such that $\nu+\alpha>7/2$,
\begin{gather}
\label{mixed_wave_1}
\|\PPP_c\vec{u}\|_{L^{\infty}_t(\R,\dot{H}^1_0\times L^{2})}+\|P_c u\|_{L^pL^q}\lesssim \lVert \PPP_c(u_0,u_1)\rVert_{\dot{ H}^1_0\times L^2}+\lVert \langle x\rangle^\nu P_c f\rVert_{L^2_{t,x}}\\
\label{mixed_wave_2}
\|\langle x\rangle^{-\nu}P_c u\|_{L^2_{t,x}}\lesssim \lVert \PPP_c(u_0,u_1)\rVert_{\dot{ H}^1_0\times L^2}+\lVert P_c f\rVert_{L^1L^2}.
\end{gather} 
\end{theorem}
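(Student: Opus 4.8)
The plan is to deduce both inequalities from the ingredients already assembled above --- the weighted $L^2$ estimates of Theorem \ref{T:L2}, the energy estimate of Corollary \ref{cor:energy-G}, the global Strichartz estimate of Theorem \ref{T:wave}, and Christ and Kiselev's Lemma \ref{L:ChristKiselev} --- following verbatim the scheme of Step 1 in the proof of Theorem \ref{T:wave} in Subsection \ref{sub:mainproof}. Since $P_c$ commutes with $\AAA_V$ (defined in \eqref{defAV}), I would first reduce to the case $\PPP_c(u_0,u_1)=(u_0,u_1)$ and $P_cf=f$, so that every $P_c$ and $\PPP_c$ can be dropped, and then split each estimate, by linearity, into its homogeneous part ($f=0$) and its inhomogeneous part ($(u_0,u_1)=(0,0)$), treating the latter by a Duhamel / $TT^*$ argument followed by Christ--Kiselev.

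For \eqref{mixed_wave_1}: when $f=0$ the bound is exactly Theorem \ref{T:wave} for the $L^pL^q$ term and Corollary \ref{cor:energy-G} for the $L^\infty_t\HHH$ term. When $(u_0,u_1)=(0,0)$, put $F=\langle x\rangle^\nu f$ and write the Duhamel formula $\vec u(t)=\int_0^t e^{i(t-s)\AAA_V}\PPP_c(0,\langle x\rangle^{-\nu}F(s))\,ds$, using $\PPP_c(0,\langle x\rangle^{-\nu}\langle x\rangle^\nu f)=(0,P_cf)$. I would introduce the non-retarded operator $TF(t)=e^{it\AAA_V}\PPP_cG$ with $G=\int_{\R}e^{-is\AAA_V}\PPP_c(0,\langle x\rangle^{-\nu}F(s))\,ds\in\HHH$, and establish the key bound $\|G\|_{\HHH}\lesssim\|F\|_{L^2(\R\times\Omega)}$: this is the dual, with respect to the $\HHH_V$-inner product, of the statement that $(g_0,g_1)\mapsto\langle x\rangle^{-\nu}\Pi_1 e^{i\cdot\AAA_V}\PPP_c(g_0,g_1)$ is bounded from $\HHH$ to $L^2(\R\times\Omega)$, which is precisely the second-component half of the homogeneous estimate \eqref{eq:lse} --- and this is where the hypothesis $\nu+\alpha>7/2$ enters. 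Then Corollary \ref{cor:energy-G} and Theorem \ref{T:wave} applied to the datum $G$ give $\|TF\|_{L^\infty_t\HHH}+\|\Pi_0 TF\|_{L^pL^q}\lesssim\|G\|_{\HHH}\lesssim\|F\|_{L^2}$, and since $p>2$ two applications of Lemma \ref{L:ChristKiselev} (with $r=2<s=p$, and with $r=2<s=\infty$) convert $T$ into the retarded operator, which is $F\mapsto\vec u$; this proves \eqref{mixed_wave_1}.

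For \eqref{mixed_wave_2}: the homogeneous part is the local-energy-decay estimate $\|\langle x\rangle^{-\nu}u\|_{L^2(\R\times\Omega)}\lesssim\|\PPP_c(u_0,u_1)\|_{\HHH}$, which I would get from \eqref{eq:lse} (equivalently from Corollary \ref{cor:integrability_H-1}, reached by lowering the regularity of the data by one derivative exactly as in the proof of that corollary), again using $\nu+\alpha>7/2$. For the inhomogeneous part, write $u(t)=\Pi_0\int_0^t e^{i(t-s)\AAA_V}(0,f(s))\,ds$; the associated non-retarded operator factors through $\widetilde G=\int_{\R}e^{-is\AAA_V}(0,f(s))\,ds$, for which $\|\widetilde G\|_{\HHH}\le\int_{\R}\|f(s)\|_{L^2}\,ds=\|f\|_{L^1L^2}$ simply by boundedness of the wave flow on $\HHH$, and then the homogeneous case of \eqref{mixed_wave_2} just proved bounds $\|\langle x\rangle^{-\nu}\Pi_0 e^{it\AAA_V}\PPP_c\widetilde G\|_{L^2(\R\times\Omega)}$ by $\|\widetilde G\|_{\HHH}$; since the input space $L^1_tL^2_x$ has time exponent $1<2$, Lemma \ref{L:ChristKiselev} (with $r=1<s=2$) upgrades this to the retarded operator $f\mapsto\langle x\rangle^{-\nu}u$, completing \eqref{mixed_wave_2}.

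I expect the main obstacle to be the duality bound $\|G\|_{\HHH}\lesssim\|F\|_{L^2}$ in \eqref{mixed_wave_1}: carrying it out rigorously requires setting up the $TT^*$ identity against the $L_V$-adapted inner product of $\HHH_V$ (using the norm equivalence of Lemma \ref{lem:equiv_norm} and the unitarity of $e^{it\AAA_V}$ on $\HHH_V$) and then invoking the full weighted homogeneous estimate \eqref{eq:lse}, not merely a cut-off version of local energy decay. A related delicate point, for \eqref{mixed_wave_2}, is that its homogeneous part concerns $u$ itself rather than $\nabla_{t,x}u$, so one must pass through the shifted data space $L^2\times\dot H^{-1}$ as in Corollary \ref{cor:integrability_H-1}. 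The remaining verifications --- commutation of $P_c$ with the flows, the Duhamel formulas, the identity $\PPP_c(0,\langle x\rangle^{-\nu}\langle x\rangle^\nu f)=(0,P_cf)$, and the bookkeeping of Christ--Kiselev exponents --- are routine.
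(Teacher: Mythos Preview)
Your overall strategy for both inequalities---reduce to $\PPP_c(u_0,u_1)=(u_0,u_1)$, $P_cf=f$, split homogeneous and inhomogeneous, handle the inhomogeneous part by factoring through the non-retarded operator and then invoking Christ--Kiselev---is precisely the route the paper indicates (``close to Step~1 of the proof of Theorem~\ref{T:wave} and to the proof of Theorem~\ref{T:Strichartz_H-1}''). Your treatment of \eqref{mixed_wave_1} is correct in every detail: the bound $\|G\|_{\HHH}\lesssim\|F\|_{L^2_{t,x}}$ is the $\HHH_V$-adjoint of the $\partial_tu$-component of the homogeneous estimate \eqref{eq:lse}, and the hypothesis $\nu+\alpha>7/2$ enters there exactly as you say.

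For \eqref{mixed_wave_2} there is a gap in the homogeneous step. You claim the bound
\[
\|\langle x\rangle^{-\nu}u\|_{L^2_{t,x}}\lesssim\|(u_0,u_1)\|_{\dot H^1_0\times L^2}
\]
follows ``from \eqref{eq:lse} (equivalently from Corollary~\ref{cor:integrability_H-1})'', but neither delivers it as stated. The first component of \eqref{eq:lse} controls $\|\langle x\rangle^{-\nu}u\|_{L^2_t\dot H^1_x}$, not $\|\langle x\rangle^{-\nu}u\|_{L^2_{t,x}}$; and Corollary~\ref{cor:integrability_H-1} has $\|(u_0,u_1)\|_{L^2\times\dot H^{-1}}$ on the right, which is \emph{not} dominated by $\|(u_0,u_1)\|_{\dot H^1_0\times L^2}$ since $\dot H^1_0(\Omega)\not\hookrightarrow L^2(\Omega)$ on an exterior domain. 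In fact a scaling heuristic on $\R^N$ (data at spatial scale $\lambda^{-1}\to\infty$: the left side scales like $\lambda^{\nu-(N+1)/2}$, the right like $\lambda^{(2-N)/2}$) shows this homogeneous bound fails for $\nu<3/2$ already in the free case. The cure is to pass from the $\dot H^1$ control in \eqref{eq:lse} to an $L^2$ control via Hardy's inequality, at the cost of one power of the weight, yielding the estimate for $\nu>3/2$; this is exactly the threshold the paper itself uses in Step~2 of the proof of Theorem~\ref{T:wave} (``for any $\nu_1>3/2$''). With that adjustment your inhomogeneous Christ--Kiselev argument for \eqref{mixed_wave_2} then goes through as written.
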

\begin{theorem}
With the assumptions and notations as in Theorem \ref{T:LS}, one has, for any $\nu>1/2$,
\begin{gather}
	\label{mixed_LS_1}
	\|P_c \varphi\|_{L^{p_1}L^{q_1}}\le C\Bigl(\|P_c\varphi_0\|_{L^2}+\|\langle x\rangle^{\nu}P_c \psi\|_{L^{2}(\R,H^{-1/2}_D)}\Bigr).\\
	\label{mixed_LS_2}
\|\langle x\rangle^{-\nu}P_c \varphi\|_{L^{2}(\R,H^{1/2}_D)}\le C\Bigl(\|P_c\varphi_0\|_{L^2}+\|P_c \psi\|_{L^{p_2'}L^{q_2'}}\Bigr).\end{gather}
\end{theorem}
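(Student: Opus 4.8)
The plan is to deduce both \eqref{mixed_LS_1} and \eqref{mixed_LS_2} from the already-established homogeneous estimates — the global Strichartz inequality of Theorem \ref{T:LS} (taken with $\psi=0$) and the global weighted smoothing inequality of Theorem \ref{T:L2} (taken with $\psi=0$) — by a $TT^{*}$ argument followed by the Christ–Kiselev Lemma \ref{L:ChristKiselev}, exactly along the lines of Step 1 in the proof of Theorem \ref{T:wave} (Subsection \ref{sub:mainproof}) and of Proposition \ref{P:local_estimates}. Write $U(t)=e^{-itL_V}$, which is unitary on $L^{2}(\Omega)$ and commutes with $P_c$, so that the Duhamel formula reads $P_c\varphi(t)=U(t)P_c\varphi_0-i\int_0^tU(t-s)P_c\psi(s)\,ds$. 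Since every term carries a $P_c$, one may assume $P_c\varphi_0=\varphi_0$ and $P_c\psi=\psi$, and by density it suffices to work with smooth, compactly supported data, which legitimizes the manipulations. The $U(t)P_c\varphi_0$ contributions to \eqref{mixed_LS_1} and to \eqref{mixed_LS_2} are precisely Theorem \ref{T:LS} and Theorem \ref{T:L2} in the homogeneous case — for the latter, given $\nu>1/2$ one first picks an auxiliary $\nu'>1/2$ with $\nu+\nu'>2$ so that Theorem \ref{T:L2} applies with left weight $\nu$ — so it remains to treat the Duhamel term, i.e.\ to take $\varphi_0=0$.

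Next I would record the homogeneous inputs as boundedness of the operators $W_j\colon g\mapsto U(\cdot)P_cg$ from $L^2(\Omega)$ to $L^{p_j}(\R,L^{q_j})$ ($j=1,2$, from Theorem \ref{T:LS}) and $S\colon g\mapsto\la x\ra^{-\nu}U(\cdot)P_cg$ from $L^2(\Omega)$ to $L^2(\R,H^{1/2}_D)$ (from Theorem \ref{T:L2}), and then pass to adjoints for the $L^2$ space–time pairings: using $U(t)^{*}=U(-t)$ and the self-adjointness of $P_c$, the operator $W_j^{*}\colon F\mapsto\int_\R U(-s)P_cF(s)\,ds$ is bounded $L^{p_j'}(\R,L^{q_j'})\to L^2(\Omega)$, and $S^{*}\colon h\mapsto\int_\R U(-s)P_c\la x\ra^{-\nu}h(s)\,ds$ is bounded $L^2(\R,H^{-1/2}_D)\to L^2(\Omega)$. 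Composing, the untruncated operators $W_1S^{*}$ and $SW_2^{*}$, explicitly $h\mapsto\bigl[t\mapsto\int_\R U(t-s)P_c\la x\ra^{-\nu}h(s)\,ds\bigr]$ and $F\mapsto\bigl[t\mapsto\la x\ra^{-\nu}\int_\R U(t-s)P_cF(s)\,ds\bigr]$, are bounded from $L^2(\R,H^{-1/2}_D)$ to $L^{p_1}L^{q_1}$ and from $L^{p_2'}L^{q_2'}$ to $L^2(\R,H^{1/2}_D)$ respectively.

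Finally, since $p_1>2$ and $p_2>2$ by Assumption \ref{As:pqLS'}, the time exponents above are strictly ordered ($2<p_1$ and $p_2'<2$), so the Christ–Kiselev Lemma \ref{L:ChristKiselev} upgrades these to bounds for the corresponding causal operators, in which $\int_\R$ is replaced by $\int_0^t$ (applying the lemma separately on the half-lines $(0,\infty)$ and $(-\infty,0)$ to recover exactly the Duhamel integral). Writing $\psi=\la x\ra^{-\nu}h$ with $h=\la x\ra^{\nu}\psi$ in the first case, and keeping $\psi$ unchanged in the second, these causal operators are $\psi\mapsto\int_0^tU(t-s)P_c\psi(s)\,ds$ and $\psi\mapsto\la x\ra^{-\nu}P_c\varphi$, so one obtains exactly the inhomogeneous halves of \eqref{mixed_LS_1} and \eqref{mixed_LS_2}, finishing the proof.

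I do not expect a substantial obstacle: this is a routine $TT^{*}$-plus-Christ–Kiselev bootstrap, and all of the analytic content is already contained in Theorems \ref{T:LS} and \ref{T:L2}. The only points needing care are (i) threading the projection $P_c$ and the polynomial weights correctly through the duality so that the composite operators really have the stated form, and (ii) the bookkeeping for the Christ–Kiselev lemma — checking the strict ordering of the time exponents, which is precisely where the hypothesis $p_j>2$ enters, and dealing with the continuity-in-$(t,s)$ assumption on the kernel as in the earlier applications of the lemma in this paper.
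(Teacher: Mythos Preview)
Your proposal is correct and follows essentially the same route the paper indicates (the $TT^{*}$ plus Christ--Kiselev argument modeled on Step~1 of Subsection~\ref{sub:mainproof} and on the proof of Theorem~\ref{T:Strichartz_H-1}). The bookkeeping points you flag --- threading $P_c$ and the weights through the duality, and the strict ordering $p_j>2$ of time exponents --- are exactly the minor details one needs to check, and you have handled them correctly.
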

\subsection{Strichartz estimates for initial data with lower regularity for the wave equation}
\label{AA:lower}
In this subsection, we state Strichartz estimates for the wave equation \eqref{Wave_intro} with initial data in Sobolev spaces with lower regularity. We start with solutions with initial data in $L^2\times \dot{H}^{-1}$:
\begin{theorem}
 \label{T:Strichartz_H-1}
 Let $\Omega$ and $V$ satisfy Assumptions \ref{As:K}, \ref{As:V}, \ref{As:zero}. Let $(p,q)$ such that Assumption \ref{As:pqwave} holds. Then, there is $C>0$ such that for all solutions $u$ of \eqref{Wave_intro} with $f\in  L^{p'} L^{q'}$, $\vec{u}_{\restriction t=0}=(u_0,u_1)\in L^2(\Omega)\times \dot{H}^{-1}$ 
 one has 
$$\sup_{t\in \R}\|\PPP_c\vec{u}(t)\|_{L^2\times \dot{H}^{-1}}\leq C\left( \|P_c f\|_{L^{p'}L^{q'}}+\|\PPP_c(u_0,u_1)\|_{L^2\times \dot{H}^{-1}}\right).$$
 \end{theorem}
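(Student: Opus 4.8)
The plan is to deduce Theorem~\ref{T:Strichartz_H-1} from the global Strichartz estimate of Theorem~\ref{T:wave} and the energy bound of Corollary~\ref{cor:H-1_wave} by a duality argument at a fixed time, in the same spirit as Step~3 of the proof of Theorem~\ref{T:wave}. First I would split $\vec u=\vec u_{\mathrm{hom}}+\vec u_{\mathrm{inh}}$, where $\vec u_{\mathrm{hom}}$ solves \eqref{Wave_intro} with data $(u_0,u_1)$ and $f=0$, and $\vec u_{\mathrm{inh}}$ solves it with zero data and right-hand side $f$. Since $\PPP_c$ commutes with the flow, Corollary~\ref{cor:H-1_wave} (extended by density to data in $L^2\times\dot H^{-1}$) gives at once $\sup_t\|\PPP_c\vec u_{\mathrm{hom}}(t)\|_{L^2\times\dot H^{-1}}\lesssim\|\PPP_c(u_0,u_1)\|_{L^2\times\dot H^{-1}}$, so the real task is the inhomogeneous term $\vec u_{\mathrm{inh}}$, for which we may assume $f=P_c f$.

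For $\vec u_{\mathrm{inh}}$ I would argue by duality. By density it suffices to take $f\in C_0^\infty(\R\times\Omega)$, for which $\vec u_{\mathrm{inh}}\in C(\R,\HHH)$ and, by finite speed of propagation, $\vec u_{\mathrm{inh}}(t)$ is compactly supported in space, which makes all the pairings below absolutely convergent; the general case is then recovered by density, which also \emph{defines} the solution for $f\in L^{p'}L^{q'}$ and $(u_0,u_1)\in L^2\times\dot H^{-1}$. Fix $T\in\R$ and $(Y_0,Y_1)$ in the unit ball of $L^2(\Omega)\times\dot H^1_0(\Omega)=(L^2(\Omega)\times\dot H^{-1}(\Omega))^*$, and let $w$ solve the homogeneous equation $(\partial_t^2+L_V)w=0$ with $(w,\partial_t w)_{\restriction t=T}=(Y_1,-Y_0)$. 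Integrating on $[0,T]$ the elementary identity
\[
\frac{d}{dt}\Big(\langle\partial_t u_{\mathrm{inh}},w\rangle_{L^2}-\langle u_{\mathrm{inh}},\partial_t w\rangle_{L^2}\Big)=\langle P_c f,w\rangle_{L^2},
\]
which uses only the self-adjointness of $L_V$, and exploiting the choice of Cauchy data for $w$ at $t=T$ so that the boundary terms reconstruct the duality bracket, one arrives at
\[
\big\langle\vec u_{\mathrm{inh}}(T),(Y_0,Y_1)\big\rangle=\int_0^T\langle P_c f(s),w(s)\rangle_{L^2}\,ds=\int_0^T\langle P_c f(s),P_c w(s)\rangle_{L^2}\,ds,
\]
using $f=P_c f$ and the $L^2$-self-adjointness of $P_c$.

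Finally I would bound the right-hand side by Hölder in space-time and Theorem~\ref{T:wave}: since $P_c$ commutes with $L_V$, $P_c w$ again solves the homogeneous equation, so by time-translation (and time-reversal) invariance Theorem~\ref{T:wave} gives $\|P_c w\|_{L^pL^q}\lesssim\|\PPP_c(Y_1,-Y_0)\|_{\HHH}\lesssim\|(Y_0,Y_1)\|_{L^2\times\dot H^1_0}$, the last inequality using the boundedness of $\PPP_c$ on $\HHH$ (a consequence of Hardy's inequality together with the smoothness and exponential decay of the $Y_j$). Hence $|\langle\vec u_{\mathrm{inh}}(T),(Y_0,Y_1)\rangle|\le\|P_c f\|_{L^{p'}L^{q'}}\|P_c w\|_{L^pL^q}\lesssim\|P_c f\|_{L^{p'}L^{q'}}$, uniformly in $T$ and in $(Y_0,Y_1)$ in the unit ball; taking suprema gives $\sup_T\|\vec u_{\mathrm{inh}}(T)\|_{L^2\times\dot H^{-1}}\lesssim\|P_c f\|_{L^{p'}L^{q'}}$, and adding the homogeneous bound concludes. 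The main obstacle, such as it is, is purely bookkeeping: fixing the duality pairing $(L^2\times\dot H^{-1})^*=L^2\times\dot H^1_0$ and the matching placement of $(Y_0,Y_1)$ as Cauchy data for $w$ so that the integration-by-parts boundary terms exactly equal $\langle\vec u_{\mathrm{inh}}(T),(Y_0,Y_1)\rangle$, together with the density argument that defines the solution for rough data. Unlike the proof of Theorem~\ref{T:wave}, no Christ--Kiselev lemma is needed here, since the inequality is evaluated at a fixed time rather than in a retarded space-time norm.
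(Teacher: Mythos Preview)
Your argument is correct and uses the same two ingredients as the paper---Corollary~\ref{cor:H-1_wave} for the homogeneous part and Theorem~\ref{T:wave} for the dual bound---but assembles them differently. The paper dualizes the homogeneous Strichartz estimate to obtain boundedness of $f\mapsto\int_0^T \PPP_c\,e^{-is\AAA_V}(0,f)\,ds$ from $L^{p'}L^{q'}$ to $L^2\times\dot H^{-1}$, then composes with the flow $e^{it\AAA_V}$ (via Corollary~\ref{cor:H-1_wave}) to get a $C^0_t$ bound for the non-retarded integral $\int_0^T$, and finally invokes Christ--Kiselev to pass to the retarded integral $\int_0^t$. Your proof bypasses Christ--Kiselev entirely: by performing duality at each fixed time $T$ against a backward homogeneous solution $w$ with data $(Y_1,-Y_0)$ at $t=T$, the integration-by-parts identity produces exactly $\int_0^T\langle P_cf,P_cw\rangle$, and the retardation is automatic since $\vec u_{\mathrm{inh}}(T)$ only sees $f$ on $[0,T]$. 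This is slightly more elementary; the paper's route has the advantage of fitting the general template used throughout (and in Appendix~\ref{AA:mixed}), where Christ--Kiselev is the standard device for upgrading non-retarded to retarded estimates.
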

 \begin{proof}
 By Corollary \ref{cor:H-1_wave} we can assume $(u_0,u_1)=(0,0)$.
  The proof in this case is similar to Step 3 of the proof of Theorem \ref{T:wave}, and uses Theorem \ref{T:wave}, a duality argument and Christ and Kiselev Lemma. Let $\mathcal{A}_V$ be defined by \eqref{defAV}.
Let $T>0$.
By Theorem \ref{T:wave}, the operator $(u_0,u_1)\mapsto \Pi_0 e^{it\AAA_V}\PPP_c(u_0,u_1)$ is bounded from $\dot{H}^{1}_0\times L^2$ to $L^p(0,T),L^q(\Omega))$. By duality,
$$f\mapsto\int_0^T \PPP_c\left(e^{-is\AAA_V} (0,f)\right)ds$$
is bounded from $L^{p'}((0,T),L^{q'}(\Omega))$ to $L^2\times \dot{H}^{-1}$. By Corollary \ref{cor:H-1_wave},
$$f \mapsto \int_0^{T}\PPP_c\left(e^{i(t-s)\AAA_V} (0,f)\right)ds $$
is bounded from $L^{p'}((0,T),L^{q'}(\Omega))$ to $C^0\left([0,T],(L^2\times \dot{H}^{-1})(\Omega)\right)$. The conclusion follows from Christ and Kiselev Lemma, and the fact that the estimates are uniform in $T$.
 \end{proof}

By interpolation, combining Theorems \ref{T:wave} and Theorem \ref{T:Strichartz_H-1}, one obtains Strichartz estimates for solutions of equation (1.1) with initial data $\dot{H}^{\gamma}_D\times \dot{H}^{\gamma-1}_D$, $\gamma\in (0,1)$, where the spaces $\dot{H}^{\gamma}_D$ are defined as follows:
\begin{definition}
 For $0<\gamma<1$,  $\dot{H}^{\gamma}_D$ is the complex interpolate $\Big[L^2(\Omega),\dot{H}^1_0(\Omega)\Big]_{\gamma}$, and $\dot{H}^{-\gamma}_D$ is the complex interpolate $\left[L^2(\Omega),\dot{H}^{-1}(\Omega)\right]_{\gamma}$
\end{definition}
We next give a typical result:
\begin{corollary}
\label{cor:fractional}
Assuming that Assumptions \ref{As:K}, \ref{As:V} and \ref{As:zero} hold, and that Assumption \ref{As:pqwave} holds for all pairs $(p,q)$ with $p>2$ that satisfies \eqref{admissible_wave}, one has, for all solution of \eqref{Wave_intro}, \eqref{Wave_ID},
$$\|P_c u\|_{L^pL^q}\lesssim \left\|P_c u_0\right\|_{\dot{H}^{\gamma}_D}+\left\|P_c u_1\right\|_{\dot{H}^{\gamma-1}_D}+\|P_c f\|_{L^{r'}L^{s'}}$$
for all $\gamma\in [0,1]$, $(p,q)$, $(r,s)$ satisfying
\begin{equation}
 \label{cond1}
 \frac{1}{p}+\frac{N}{q}=\frac{N}{2}-\gamma=\frac{1}{r'}+\frac{N}{s'}-2
\end{equation} 
and
\begin{equation}
 \label{cond2}
 \frac{1}{p}+\frac{N-1}{2q}\leq \frac{N-1}{4},\quad
 \frac{1}{r}+\frac{N-1}{2s}\leq \frac{N-1}{4}
\end{equation}
with $p>2$, $r>2$.
\end{corollary}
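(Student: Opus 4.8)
The plan is to deduce the estimate by complex interpolation between the two endpoint regularities $\gamma=1$ (Theorem~\ref{T:wave}) and $\gamma=0$ (Corollary~\ref{cor:H-1_wave} and Theorem~\ref{T:Strichartz_H-1}), supplemented by the near/far decomposition of the proof of Theorem~\ref{T:wave} to recover the full admissible range, and to pass from the homogeneous to the inhomogeneous part by duality and the Christ--Kiselev lemma. First I would reduce to the two cases $f=0$ and $(u_0,u_1)=(0,0)$, as in Subsection~\ref{sub:mainproof}.

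\emph{Homogeneous case.} Consider $\mathcal U\colon (u_0,u_1)\mapsto P_cu$ (solution of \eqref{Wave_intro} with $f=0$). By Theorem~\ref{T:wave}, and using the hypothesis that Assumption~\ref{As:pqwave} holds for \emph{every} pair with $p_1>2$ satisfying \eqref{admissible_wave}, $\mathcal U$ is bounded $\dot H^1_0(\Omega)\times L^2(\Omega)\to L^{p_1}L^{q_1}$ for that whole one-parameter family of pairs; by Corollary~\ref{cor:H-1_wave} it is bounded $L^2(\Omega)\times\dot H^{-1}(\Omega)\to L^\infty L^2$. Complex interpolation with parameter $\gamma$, using by definition of the spaces $\dot H^{\pm\gamma}_D$ that $[L^2,\dot H^1_0]_\gamma=\dot H^\gamma_D$ and $[\dot H^{-1},L^2]_\gamma=\dot H^{\gamma-1}_D$, together with $[L^\infty L^2,L^{p_1}L^{q_1}]_\gamma=L^pL^q$ with $\tfrac1p=\tfrac\gamma{p_1}$ and $\tfrac1q=\tfrac{1-\gamma}2+\tfrac\gamma{q_1}$, yields $\|P_cu\|_{L^pL^q}\lesssim\|P_cu_0\|_{\dot H^\gamma_D}+\|P_cu_1\|_{\dot H^{\gamma-1}_D}$; a one-line computation from $\tfrac1{p_1}+\tfrac N{q_1}=\tfrac N2-1$ gives the scaling relation in \eqref{cond1}. (The polynomial weights used below are harmless on these spaces: by Hardy's inequality $\langle x\rangle^{-\nu}$ is bounded on $L^2$ and on $\dot H^1_0$, hence on $\dot H^\gamma_D$.)

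\emph{Full range.} The pairs produced this way are precisely those with $p_1=\gamma p>2$; for $N=3$ this coincides with the wave-admissibility constraint \eqref{cond2}, but for $N\ge4$ the set of $(p,q)$ satisfying \eqref{cond1}, \eqref{cond2} and $p>2$ is strictly larger (it can contain pairs with $\gamma p\le2$). For these remaining pairs I would re-run the proof of Theorem~\ref{T:wave} at regularity $\gamma$: writing $v=P_cu=(1-\chi)v+\chi v$ with $\chi\in C_0^\infty(\R^N)$ equal to $1$ near $K$, the exterior piece $(1-\chi)v$ solves a wave equation on $\R^N$ with source $(\chi-1)Vv-(\Delta\chi)v-2\nabla\chi\cdot\nabla v$; by \eqref{decayV} and the regularity-$\gamma$ weighted local-energy bound obtained by interpolating \eqref{eq:lse} with Corollary~\ref{cor:integrability_H-1}, this source lies in $\langle x\rangle^{-\nu}L^2_t(\dots)$ for a suitable $\nu>1/2$, so the classical $\dot H^\gamma$--Strichartz estimate for the free wave equation on $\R^N$ — valid on the entire wave-admissible range \eqref{cond2} — together with a $TT^*$/Christ--Kiselev argument as in Step~1 of Subsection~\ref{sub:mainproof} controls $\|(1-\chi)v\|_{L^pL^q}$. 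The interior piece $\chi v$ is handled by finite speed of propagation and the energy bound at regularity $\gamma$ (interpolate Corollary~\ref{cor:energy-G} with Corollary~\ref{cor:H-1_wave}), reducing it to an inhomogeneous estimate with compactly supported source already treated.

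\emph{Inhomogeneous case and main obstacle.} With $(u_0,u_1)=(0,0)$, I would apply the homogeneous estimate to $e^{it\mathcal{A}_V}$, $\mathcal{A}_V$ as in \eqref{defAV}: by duality $f\mapsto\int_0^T\PPP_c\big(e^{-is\mathcal{A}_V}(0,f)\big)\,ds$ is bounded $L^{r'}([0,T],L^{s'})\to\dot H^\gamma_D\times\dot H^{\gamma-1}_D$, and Christ--Kiselev (Lemma~\ref{L:ChristKiselev}) upgrades this to the retarded operator $\int_0^t$ with values in $L^p([0,T],L^q)$, with bounds uniform in $T$; letting $T\to\infty$ finishes the proof. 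I expect the main obstacle to be precisely the full-range step: reaching the admissible pairs lying outside the naive interpolation range while carefully tracking the abstract interpolation spaces $\dot H^\gamma_D$ and the polynomial weights in the local-energy estimates.
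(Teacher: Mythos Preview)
Your core approach --- complex interpolation between Theorem~\ref{T:wave} ($\gamma=1$) and the $\gamma=0$ endpoint --- is exactly the paper's argument. The paper does it in one stroke: it invokes Theorem~5.1.2 of Bergh--L\"ofstr\"om to interpolate the full operator $(u_0,u_1,f)\mapsto P_cu$ between Theorem~\ref{T:wave} and Theorem~\ref{T:Strichartz_H-1}, with a word about the $L^\infty_t$ endpoint on finite time intervals. There is no need to separate the homogeneous and inhomogeneous parts or to re-run Christ--Kiselev at level~$\gamma$; your separation is correct but redundant.

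Your ``full range'' concern is a genuine observation and the paper does not address it. Interpolating between an admissible $(p_1,q_1)$ at $\gamma=1$ and $(\infty,2)$ at $\gamma=0$ forces $p=p_1/\gamma>2/\gamma$ (and dually $r>2/(1-\gamma)$), whereas for $N\ge4$ condition~\eqref{cond2} combined with~\eqref{cond1} only requires $p\ge\tfrac{N+1}{(N-1)\gamma}<\tfrac{2}{\gamma}$. So there is a strip of exponents in the stated range that straightforward interpolation does not reach; the paper's brief proof is silent on this.

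That said, your proposed repair for that strip is not convincing as written. The exterior piece $(1-\chi)v$ is fine: free $\dot H^\gamma$-Strichartz on $\R^N$ plus an interpolated weighted $L^2$ bound and Christ--Kiselev do the job. The trouble is the interior piece $\chi v$. You invoke ``an inhomogeneous estimate with compactly supported source already treated'', but at regularity $\gamma$ no such Strichartz estimate on $\Omega$ has yet been established --- it is precisely the object under proof. The compact spatial support of $\chi v$ lets you trade $q$ for a larger $\tilde q$ by H\"older in $x$, but along the scaling line~\eqref{cond1} that pushes you to \emph{larger} $p$, not smaller; and finite speed of propagation gives no gain in time-integrability on an infinite interval. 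To close this you would need either a local-in-time $\dot H^\gamma$-Strichartz on $\Omega$ (which Assumption~\ref{As:pqwave}, stated at $\gamma=1$, does not directly supply) or a decomposition into unit time intervals exploiting the interpolated square-integrable local energy, in the spirit of the near-obstacle step in \S\ref{sub:Strichartz} for Schr\"odinger.
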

For example, the conclusion of Corollary \ref{cor:fractional} is valid when  Assumptions \ref{As:K}, \ref{As:V} and \ref{As:zero} hold, and $K$ is strictly convex.
When Assumptions \ref{As:K}, \ref{As:V} and \ref{As:zero} hold and Assumption \ref{As:pqwave} is only valid for some of the admissible pairs $(p,q)$, it is of course possible to obtain Strichartz estimates as in Corollary \ref{cor:fractional} for a smaller set of parameters $(p,q,r,s,\gamma)$.
\begin{proof}[Proof of Corollary \ref{cor:fractional}]
 The case $\gamma=1$ is given by Theorem \ref{T:wave} and the case $\gamma=0$ by Theorem \ref{T:Strichartz_H-1}. The case $\gamma\in (0,1)$ follows by interpolation, using Theorem 5.1.2 of \cite{BerghLofstrom76BO}. Note that in the notations of \cite{BerghLofstrom76BO}, introduced before Theorem 5.1.2, the space  $L_{\infty}([-T,T],dt,A)$ (where $A$ is a Banach space) is spanned by simple functions, so that the spaces $L_{\infty}^0(A)$ and $L_{\infty}(A)$ coincide in this case. The interpolation procedure can thus be carried out on $[-T,T]$ for all $T>0$, which yields the result.
\end{proof}

\end{document}